\documentclass[11pt]{amsart}

\usepackage[T1]{fontenc}
\usepackage[utf8]{inputenc}
\usepackage{lmodern}
\usepackage[margin=1.08in]{geometry}
\usepackage{amsmath,amssymb,amsthm,mathtools}
\usepackage{microtype}
\usepackage{enumitem}
\usepackage{xcolor}
\usepackage{tikz}
\usepackage[colorlinks=true,linkcolor=blue,citecolor=green!45!black,urlcolor=blue]{hyperref}

\newcommand{\GL}{\operatorname{GL}}
\newcommand{\Mat}{\operatorname{Mat}}
\newcommand{\tr}{\operatorname{tr}}
\newcommand{\Ind}{\operatorname{Ind}}
\newcommand{\Hom}{\operatorname{Hom}}
\newcommand{\cO}{\mathcal O}
\newcommand{\one}{\mathbf 1}
\newcommand{\pord}{\operatorname{pord}}
\newcommand{\len}{\ell}
\newcommand{\MW}{\mathrm{MW}}

\theoremstyle{plain}
\newtheorem{theorem}{Theorem}[section]
\newtheorem{proposition}[theorem]{Proposition}
\newtheorem{lemma}[theorem]{Lemma}
\newtheorem{corollary}[theorem]{Corollary}
\newtheorem{conjecture}[theorem]{Conjecture}

\theoremstyle{definition}
\newtheorem{definition}[theorem]{Definition}

\theoremstyle{remark}
\newtheorem{remark}[theorem]{Remark}

\title{A degenerate Whittaker criterion for $\mathrm GL_{2n}$}
\author{Taiwang  Deng}
\address[T.D.]{Beijing Institute of Mathematical Sciences and Applications (BIMSA), Huairou District, 100084, Beijing\\
China}
\email{dengtaiw@bimsa.cn}
\subjclass[2020]{Primary 22E50; Secondary 11F70, 11F66}
\keywords{Degenerate Whittaker models, twisted Jacquet modules, Zelevinsky classification, Zelevinsky involution, Rankin--Selberg \(L\)-functions}
\date{}

\begin{document}

\begin{abstract}
Let $F$ be a non-Archimedean local field.  Let $N$ be the unipotent radical of
the standard parabolic subgroup of $\mathrm GL_{2n}(F)$ of type $(n,n)$ with fixed
nondegenerate additive character $\psi$.  For an irreducible admissible
representation $\pi$ of $\mathrm GL_{2n}(F)$, a theorem due to
Gomez--Gourevitch--Sahi on generalized Whittaker models gives a criterion for
the vanishing of the twisted Jacquet module $\pi_{N,\psi}$ in terms of the
wave-front set.  We translate this orbit-theoretic answer into
Langlands--Zelevinsky data: if $\pi=L(\mathfrak m)$, then
$\pi_{N,\psi}=0$ if and only if the Zelevinsky dual
$\mathfrak m^{\mathrm t}$ contains a segment of length at least $n+1$.

We do this in response to a conjecture proposed by D. Prasad about the
vanishing of $\pi_{N,\psi}$ in terms of the adjoint $L$-function
$L(s,\pi\times\pi^\vee)$.  We prove that, for every irreducible
representation $\pi$, vanishing of $\pi_{N,\psi}$ implies the pole inequalities
predicted by D.Prasad.  However, we show that the converse implication is
false by an explicit counterexample for $\mathrm GL_4(F)$.

For the generalized Steinberg constituents $v_{P_\beta}^G$ of the principal
series containing the trivial representation, we make an explicit calculation
of when $\pi_{N,\psi}$ is zero.  In particular, for $\GL_6(F)$, exactly three
of the $32$ constituents of such a principal series violate the converse
direction of the conjecture proposed by D. Prasad.
\end{abstract}

\maketitle

\section*{Introduction}

Let $F$ be a non-Archimedean local field and let $\pi$ be an irreducible
admissible representation of \(G=\GL_{2n}(F)\).  Let \(P_{n,n}=MN\) be the
standard parabolic subgroup with Levi factor
\(M\simeq\GL_n(F)\times\GL_n(F)\), and write
\[
  n(X)=\begin{pmatrix}I_n&X\\0&I_n\end{pmatrix},
  \qquad
  \psi(n(X))=\psi_0(\operatorname{Tr}X).
\]
The object studied in this paper is the twisted Jacquet module
\(\pi_{N,\psi}\).  This is the degenerate Whittaker model attached to the
nilpotent orbit \((2^n)\), and it is naturally a representation of the diagonal
copy \(\Delta\GL_n(F)\), the stabilizer of \(\psi\) in \(M\).

Our primary goal is to address a conjecture of D.~Prasad, stated in
\cite[Conjecture 6.3]{HV26}.  Write
\[
 \pord_{s=t}L(s,\pi\times\pi^\vee):=\text{ the order of pole at \(s=t\) of the adjoint \(L\)-function } L(s,\pi\times\pi^\vee).
\]
The conjecture predicts that
\begin{equation}
\label{eq:intro-prasad}
  \pi_{N,\psi}=0
  \quad\Longleftrightarrow\quad
  \pord_{s=t}L(s,\pi\times\pi^\vee)\geq n-t+1
  \qquad(1\leq t\leq n).
\end{equation}
In other words, the conjecture asks whether the
vanishing of \(\pi_{N,\psi}\) can be detected directly from a
finite list of pole orders of \(L(s,\pi\times\pi^\vee)\).  The motivation is
parallel to the Gross--Prasad--Rallis philosophy, where poles of adjoint
\(L\)-functions are expected to detect genericity phenomena in \(L\)-packets.

The first answer is in terms of the wave-front set.  The closure theorem for generalized
Whittaker models of Gomez--Gourevitch--Sahi \cite[Theorem E]{GGS17} implies
that \(\pi_{N,\psi}\) is nonzero exactly when the orbit \((2^n)\) lies in the
closure of the wave-front set of \(\pi\).  If \(\lambda(\pi)\) denotes the
partition attached to the wave-front set of \(\pi\),
Theorem~\ref{thm:orbit-criterion} gives the criterion
\[
  \pi_{N,\psi}=0
  \quad\Longleftrightarrow\quad
  \bigl(\lambda(\pi)^\top\bigr)_1\geq n+1.
\]
Thus the twisted Jacquet module vanishes precisely when the transpose of the
partition attached to the wave-front set has a part larger than \(n\).

The next step is to express this criterion in terms of the Langlands
classification.  Write \(\pi=L(\mathfrak m)\) as a Langlands quotient, where
\(\mathfrak m\) is a multisegment.  The partition attached to the wave-front set is
obtained through the Zelevinsky involution, or equivalently the
M\oe glin--Waldspurger algorithm.
If \(r_{\MW}(\mathfrak m)\) denotes the largest length of a segment in the
Zelevinsky dual multisegment \(\mathfrak m^{\mathrm t}\), then
Theorem~\ref{thm:corrected} gives
\begin{equation}
\label{eq:zelevinsky-dual-criterion}
  \pi_{N,\psi}=0
  \quad\Longleftrightarrow\quad
  r_{\MW}(\mathfrak m)\geq n+1.
\end{equation}
This is the main structural criterion of the paper.  It also explains why the
comparison with an \(L\)-function criterion is subtle: the vanishing problem is
governed by the Zelevinsky dual, whereas the Rankin--Selberg factors are
naturally computed from the original multisegment.
As two useful complements, Theorem~\ref{thm:vanishing-parabolic-subquotient}
realizes every representation with \(\pi_{N,\psi}=0\) as a subquotient of
\(\pi_1\times\pi_2\) with \(\pi_1\) one-dimensional, while
Corollary~\ref{cor:unique-nonzero-parabolic-subquotient} computes the
\((N,\psi)\)-coinvariants of such an induction and singles out its unique
irreducible subquotient with nonzero twisted Jacquet module.
Section~\ref{sec:further-examples} records Speh, Arthur-type, and parabolic
induction examples whose vanishing is immediate from
\eqref{eq:zelevinsky-dual-criterion}.  Note that these examples are not
accessible by the methods of \cite{HV26}.

The comparison with Prasad's proposed criterion has the following outcome.
The implication from vanishing to the pole inequalities holds.  More
generally, Theorem~\ref{thm:general-vanishing-to-poles} proves that for
\(\pi=L(\mathfrak m)\) and every positive integer \(t\),
\[
  \pord_{s=t}L(s,\pi\times\pi^\vee)
  \geq \max\{r_{\MW}(\mathfrak m)-t,0\}.
\]
Thus for any admissible irreducible representation \(\pi\) of
\(\GL_{2n}(F)\), the condition \(\pi_{N,\psi}=0\) implies all pole
inequalities in \eqref{eq:intro-prasad}.  The converse implication is false
with an explicit counterexample in \(\GL_4(F)\).

For the constituents attached to the normalized principal series
\(\Sigma_m(\rho)\) of \(\GL_m(F)\), defined by \eqref{eq:sigma-m-rho}, the
vanishing criterion \eqref{eq:zelevinsky-dual-criterion} can be made completely
explicit.  If \(P_\beta\) has Levi size given by 
ordered partition \(\beta\) and \(v_{P_\beta}^G\) denotes the generalized
Steinberg quotient of \(C^\infty(G/P_\beta)\), 
Corollary~\ref{cor:wf-principal-series} gives the wave-front set
\[
  \operatorname{WF}(v_{P_\beta}^G)
  =\overline{\mathcal O_{(\beta^\downarrow)^\top}}^{\,\mathrm{an}},
\]
the analytic closure of the Richardson orbit of \(P_\beta\).

The same principal-series computation supplies an infinite family of
counterexamples to the pole-to-vanishing implication in Prasad's conjecture.
For \(n\geq3\), let
\[
  \alpha_n=(\underbrace{1,\dots,1}_{n-1},2,
  \underbrace{1,\dots,1}_{n-1})
\]
be an ordered partition of \(2n\).  Corollary
\ref{cor:principal-series-pole-counterexamples} shows that the corresponding
constituent \(\pi_{\alpha_n}\) satisfies all pole inequalities in
\eqref{eq:intro-prasad}, but \((\pi_{\alpha_n})_{N,\psi}\neq0\).  In rank
\(\GL_6(F)\), Proposition~\ref{prop:GL6-three-failures} gives a complete
enumeration: among the \(32\) constituents of \(\Sigma_6(\rho)\), exactly
three violate the proposed equivalence.

Section~\ref{sec:exterior-square} separates two different roles of
\(L\)-functions.  The exterior-square factor detects possibly twisted Shalika
functionals, but it is
too coarse for the twisted Jacquet module considered here.  By contrast, the
segment-length ratios defined in \eqref{eq:segment-length-ratio} are built
from Rankin--Selberg factors of the Zelevinsky dual and give an exact
\(L\)-function test.  Theorem~\ref{thm:segment-length-ratio-criterion} states
that \(\pi_{N,\psi}=0\) if and only if, on some cuspidal line, the
finite-difference quotient
\[
  \frac{L(s,\pi^{\mathrm t}\times\tau_{\rho,n+1}^{\vee})}
       {L(s,\pi^{\mathrm t}\times\tau_{\rho,n}^{\vee})}
\]
has a pole. 

Thus the theorem of Gomez--Gourevitch--Sahi supplies the decisive vanishing
criterion for \(\pi_{N,\psi}\) in terms of the wave-front set, while the main
work of the paper is to relate this criterion to Langlands parameters, and
hence to Rankin--Selberg pole counts.  This allows us to prove one direction
of Prasad's proposed criterion, produces explicit counterexamples to the
other, and gives replacement criteria in terms of \(L\)-functions that are
exact.

\subsection*{Acknowledgements}
\addtocontents{toc}{\protect\setcounter{tocdepth}{1}}
This paper grew out of a lecture given by Professor D.~Prasad at a conference co-organized by the author at BIMSA.  The author is deeply grateful to Professor
Prasad for carefully reading an early draft and for making many helpful
suggestions.  The author is supported by the National Natural Science Foundation
of China, No.~12401013.

\section{The wave-front set criterion}

Let
\[
  G=\GL_{2n}(F),\qquad
  P=P_{n,n}=MN,
\]
where $M\simeq \GL_n(F)\times\GL_n(F)$ and
\[
  N=\left\{
  n(X):=\begin{pmatrix}I_n&X\\0&I_n\end{pmatrix}
  :X\in\Mat_n(F)\right\}.
\]
Fix a nontrivial additive character $\psi_0:F\to\mathbb C^\times$ and define
\[
  \psi(n(X))=\psi_0(\operatorname{Tr}X).
\]
For a smooth representation $\pi$ of $G$, define the twisted Jacquet module
\[
  \pi_{N,\psi}
  :=\pi\Big/\bigl\langle n v-\psi(n)v:n\in N,\ v\in\pi\bigr\rangle.
\]

Nilpotent orbits in $\mathfrak{gl}_m$ are indexed by partitions of $m$. Given a partition $\lambda$, we denote by
$\cO_{\lambda}$ the corresponding nilpotent orbit.  We use
$\mu\leq\lambda$ for the dominance order or geometrically,
$\cO_\mu\subseteq\overline{\cO_\lambda}$.
For a partition \(\lambda=(\lambda_1,\lambda_2,\dots)\), its transpose
\(\lambda^\top\) is the partition whose \(j\)-th part is
\[
  (\lambda^\top)_j:=\#\{i:\lambda_i\geq j\}.
\]
Note that \(\lambda^\top\) can be obtained by transposing the Young diagram of
\(\lambda\).

We consider the wave-front set $\mathrm{WF}(\pi)$ of a representation $\pi$, which consists of the
analytic closures of the nilpotent orbits for which the corresponding
generalized Whittaker models are nonzero; see M\oe glin--Waldspurger
\cite[Introduction]{MW87} and Gomez--Gourevitch--Sahi \cite[1.1]{GGS17}.

\begin{lemma}[Uniqueness of the wave-front orbit for \(\GL_m\)]
\label{lem:unique-wf-orbit}
Let \(\pi\) be an irreducible admissible representation of \(\GL_m(F)\).
Then the wave-front set of \(\pi\) is the analytic closure of a single
nilpotent orbit.  Equivalently, among the nilpotent orbits whose generalized
Whittaker models are nonzero, there is a unique maximal one.
\end{lemma}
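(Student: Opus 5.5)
The plan is to reduce the statement to the Mœglin--Waldspurger computation of degenerate Whittaker models for \(\GL_m\) \cite{MW87}, combined with the Gomez--Gourevitch--Sahi closure theorem \cite[Theorem E]{GGS17}.

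\textbf{Step 1 (representation-theoretic input).} Write \(\pi=L(\mathfrak m)\) with Zelevinsky dual \(\mathfrak m^{\mathrm t}\). Let \(\lambda(\pi)\) be the partition formed by the lengths of the segments of \(\mathfrak m^{\mathrm t}\), after converting cuspidal supports to sizes: a segment of length \(\ell\) built on a cuspidal representation of \(\GL_d\) contributes \(d\) parts equal to \(\ell\). I would then use Mœglin--Waldspurger's theorem in the following form. For every partition \(\mu\) of \(m\), the degenerate Whittaker model attached to \(\cO_\mu\) is nonzero if and only if \(\mu\leq\lambda(\pi)^{\top}\) in the dominance order (up to the paper's transpose convention). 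Moreover, for \(\mu=\lambda(\pi)^\top\) this model is nonzero, in fact of dimension one.

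\textbf{Step 2 (nonvanishing set is downward closed).} By \cite[Theorem E]{GGS17}, if \(\cO_\mu\subseteq\overline{\cO_{\mu'}}\) and the model for \(\cO_{\mu'}\) is nonzero, then the model for \(\cO_\mu\) is nonzero. Hence the set of orbits with nonzero generalized Whittaker model is closed downward under orbit closure. By Step 1 it has the element \(\cO_{\lambda(\pi)^\top}\), which dominates every element of the set. This element is therefore the unique maximal orbit.

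\textbf{Step 3 (wave-front set).} The wave-front set is, by definition, the union of the analytic closures of the orbits with nonzero models. By Step 2 every such orbit lies in \(\overline{\cO_{\lambda(\pi)^\top}}\), so this union equals \(\overline{\cO_{\lambda(\pi)^\top}}^{\,\mathrm{an}}\). Over \(F\), nilpotent \(\GL_m(F)\)-orbits coincide with geometric orbits: Jordan normal form has no rational forms, so the partition determines the \(F\)-orbit. Consequently no splitting of \(F\)-orbits can produce several maximal elements.

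\textbf{Main obstacle.} The hard part is Step 1: the full dominance-order description, and not merely the existence of one orbit with nonzero model. Mœglin--Waldspurger prove the upper bound only for generalized Whittaker models attached to orbits that are "maximal with nonzero model". Passing to all \(\mu\) requires either their nonvanishing argument for smaller orbits via derivatives, or the GGS closure theorem.

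I would therefore argue as follows. Let \(\cO_\mu\) be any maximal orbit with nonzero model. By \cite{MW87}, its model equals the nonzero leading coefficient computed via Bernstein--Zelevinsky derivatives, which forces \(\mu=\lambda(\pi)^\top\). Two distinct maximal orbits would thus both equal \(\cO_{\lambda(\pi)^\top}\), which gives uniqueness without needing the full downward-closure statement.
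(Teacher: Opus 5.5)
Your fallback argument in the last paragraph is essentially the paper's proof: a maximal orbit \(\cO_\mu\) with nonzero model is, by \cite[Th\'eor\`eme~I.16]{MW87}, also a maximal orbit of the local character expansion. For \(\GL_m\), the Proposition of \cite[\S II.2]{MW87} says this expansion has a single maximal orbit, and that is what actually forces \(\mu=\lambda(\pi)^\top\) in your notation, so cite these two results rather than a ``leading coefficient computed via derivatives''.

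Steps 1--3 should be dropped. As you note yourself, \cite{MW87} only treats maximal orbits, and the full dominance description in Step 1 already contains the uniqueness at stake. The downward closure from \cite[Theorem~E]{GGS17} in Step 2 cannot by itself rule out two incomparable maximal orbits.
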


\begin{proof}
M\oe glin--Waldspurger attach to \(\pi\) two sets of nilpotent orbits:
the set \(\mathcal N_{\mathrm{tr}}(\pi)\) arising from the local character
expansion and the set \(\mathcal N_{\mathrm{Wh}}(\pi)\) arising from degenerate
Whittaker models.  By \cite[Th\'eor\`eme~I.16]{MW87}, their maximal elements
coincide.  For \(G=\GL_m\), \cite[Chapitre~II, \S II.2, Proposition,
p.~444]{MW87} proves that the maximal elements of
\(\mathcal N_{\mathrm{tr}}(\pi)\) are contained in a unique nilpotent orbit.
Thus \(\mathcal N_{\mathrm{Wh}}(\pi)\) has a unique maximal orbit, and the
wave-front set is the analytic closure of this orbit.
\end{proof}

For an irreducible admissible representation $\pi$ of $\GL_m(F)$, let
$\lambda(\pi)$ be the partition attached to its wave-front set, namely the
partition indexing the unique nilpotent orbit supplied by
Lemma~\ref{lem:unique-wf-orbit}.
\begin{lemma}
\label{lem:closure}
 For every partition $\mu$ of
$m$, the generalized Whittaker model of type $\mu$ is nonzero if and only if
\[
  \mu\leq\lambda(\pi).
\]
Moreover, the generalized Whittaker model of type $\lambda(\pi)$ is
one-dimensional.
\end{lemma}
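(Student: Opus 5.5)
The plan is to combine three known inputs: the closure theorem of Gomez--Gourevitch--Sahi, the uniqueness of the maximal orbit (Lemma~\ref{lem:unique-wf-orbit}), and the Mœglin--Waldspurger multiplicity-one statement for the leading orbit.

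\emph{Step 1: ``only if''.} Suppose the generalized Whittaker model of type $\mu$ is nonzero. By definition of $\mathrm{WF}(\pi)$, the orbit $\cO_\mu$ then lies in the wave-front set. By Lemma~\ref{lem:unique-wf-orbit}, the wave-front set is the analytic closure of the single orbit $\cO_{\lambda(\pi)}$. For $\GL_m(F)$, every nilpotent $F$-orbit is geometric (classified by Jordan type, with no splitting over $F$). So the analytic closure of $\cO_{\lambda(\pi)}$ is its Zariski closure intersected with $\mathfrak{gl}_m(F)$, which is the union of the $\cO_\nu$ with $\nu\leq\lambda(\pi)$ (Gerstenhaber--Hesselink). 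Hence $\mu\leq\lambda(\pi)$.

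\emph{Step 2: ``if''.} Suppose $\mu\leq\lambda(\pi)$. Then $\cO_\mu\subseteq\overline{\cO_{\lambda(\pi)}}$, and the generalized Whittaker model attached to $\cO_{\lambda(\pi)}$ is nonzero, since $\lambda(\pi)$ is realized as a maximal element of $\mathcal N_{\mathrm{Wh}}(\pi)$. Now apply \cite[Theorem E]{GGS17}: if the generalized Whittaker model for an orbit $\cO'$ is nonzero and $\cO\subseteq\overline{\cO'}$, then the model for $\cO$ is nonzero. This gives nonvanishing for $\mu$. The main point to check here is that GGS's notion of generalized Whittaker model for $\cO_\mu$ agrees with the degenerate Whittaker model used by Mœglin--Waldspurger. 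For $\GL_m$ this holds because the choice of $F$-rational representative in a geometric orbit is unique up to conjugacy, so the model does not depend on it. I would also note that GGS's Theorem E is stated for $\mathrm{WF}(\pi)$ defined via the character expansion, and invoke \cite[Théorème I.16]{MW87} to identify it with the Whittaker-defined version.

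\emph{Step 3: multiplicity one.} By \cite[Théorème I.16]{MW87}, for $\cO$ maximal in $\mathcal N_{\mathrm{Wh}}(\pi)$, the dimension of the degenerate Whittaker model equals the coefficient $c_{\cO}$ in the local character expansion. For $\GL_m$, \cite[Chapitre II]{MW87} shows this coefficient is $1$ for the leading orbit. This follows from the computation via the Zelevinsky/MW algorithm, where the leading term is the induced character of a trivial representation of a Levi subgroup. Hence the model of type $\lambda(\pi)$ is one-dimensional.

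I expect the main obstacle to be the bookkeeping in Steps 2--3: matching the conventions of \cite{GGS17} and \cite{MW87} (neutral versus Whittaker pairs, and normalizations of $\psi$). The uniqueness of $F$-forms of orbits in $\GL_m$ is what makes all these choices immaterial.
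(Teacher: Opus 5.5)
Your proposal is correct and is essentially the paper's argument. Nonvanishing comes from \cite[Theorem E]{GGS17}; you additionally derive the ``only if'' direction directly from the definition of $\mathrm{WF}(\pi)$ and Lemma~\ref{lem:unique-wf-orbit}, which is harmless. One-dimensionality comes from M\oe glin--Waldspurger's identification of the model's dimension at a maximal orbit with the character-expansion coefficient, together with their $\GL_m$ computation in \cite[Chapitre~II]{MW87} that this coefficient is $1$. The only differences are bibliographic: the paper cites \cite[Corollaire~I.17]{MW87} rather than Th\'eor\`eme~I.16 for the dimension identification, and your parenthetical heuristic about induced characters is not needed.
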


\begin{proof}
The nonvanishing statement is the $\GL_m$ closure-order theorem for generalized
Whittaker models; see \cite[Theorem E]{GGS17}.  In the non-Archimedean case,
M\oe glin--Waldspurger identify the dimension of the degenerate Whittaker
model attached to a maximal orbit with the corresponding coefficient in the
local character expansion; see \cite[Corollaire~I.17]{MW87}.  For
\(G=\GL_m\), the proof of \cite[Chapitre~II, \S II.2, Proposition,
p.~445]{MW87} shows that this coefficient is \(1\).  See also the discussion
following \cite[Corollary G]{GGS17}.
\end{proof}

\begin{lemma}
\label{lem:character}
The twisted Jacquet module $\pi_{N,\psi}$ is the generalized Whittaker model
attached to the nilpotent orbit of Jordan type
\[
  (2^n)=(\underbrace{2,\dots,2}_{n\text{ times}}).
\]
\end{lemma}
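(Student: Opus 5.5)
Since $\mathrm{Tr}(XY)$ is a nondegenerate pairing on $\Mat_n(F)$, the character $\psi$ of $N$ can be written as $\psi(\exp Z)=\psi_0(\tr(fZ))$ for a nilpotent element $f$. The plan is to identify this $f$ explicitly, determine its Jordan type, and then check that the pair $(N,\psi)$ coincides with the unipotent subgroup and character that the generalized Whittaker model construction of \cite{MW87} or \cite{GGS17} attaches to $f$.

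\emph{Step 1: find $f$ and its orbit.} For $Z=\begin{pmatrix}0&X\\0&0\end{pmatrix}$, take $f=\begin{pmatrix}0&0\\ I_n&0\end{pmatrix}$. Then $fZ=\begin{pmatrix}0&0\\0&X\end{pmatrix}$, so $\tr(fZ)=\tr X$. Since $f^2=0$ and $\operatorname{rank} f=n$, $f$ has $n$ Jordan blocks, each of size $2$. Hence $f\in\cO_{(2^n)}$.

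\emph{Step 2: match with the Whittaker pair.} Take the neutral element $h=\operatorname{diag}(I_n,-I_n)$ and the $\mathfrak{sl}_2$-triple $(e,h,f)$ with $e=\begin{pmatrix}0&I_n\\0&0\end{pmatrix}$. Under $\operatorname{ad}h$, the eigenvalues are $2$ on the upper-right block, $0$ on $\mathfrak m$, and $-2$ on the lower-left block. So $\mathfrak g_{\geq1}=\mathfrak g_{\geq2}=\operatorname{Lie}N$, and there is no $\operatorname{ad}h$-eigenspace of eigenvalue $1$. Because $\mathfrak g_1=0$, the Heisenberg and Weil-representation step of the general construction is vacuous. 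The degenerate Whittaker model attached to $(h,f)$ is therefore exactly the coinvariants of $\pi$ under $N=\exp\mathfrak g_{\geq 2}$ with respect to the character $\exp Z\mapsto\psi_0(\tr(fZ))=\psi$. In \cite{GGS17} this is the neutral Whittaker pair $(h,f)$, whose model is by definition the generalized Whittaker model of the orbit of $f$. Up to an innocuous convention on signs or on $e$ versus $f$ (conjugate to $(2^n)$ in any case), this gives $\pi_{N,\psi}$.

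\emph{Step 3: independence of choices.} Different choices of $\psi_0$, or of the representative of the orbit, give models isomorphic up to conjugation by $G$ and twisting by $F^\times$-scalings. For instance, $\operatorname{diag}(aI_n,I_n)$ rescales $\psi$ to $\psi_0(a\,\tr X)$. So the identification does not depend on choices.

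The main obstacle is purely one of conventions: making sure the normalizations in \cite{MW87} and \cite{GGS17} (the use of $f$ versus $e$, the sign in the character, the grading conventions) match the definition of $\pi_{N,\psi}$ above. I would handle this by noting that the orbit $(2^n)$ is stable under transpose and negation, so every convention yields the same orbit.
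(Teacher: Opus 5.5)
Your proposal is correct and follows essentially the same argument as the paper: the same neutral pair $(h,f)$ with $h=\operatorname{diag}(I_n,-I_n)$, the observation that $\mathfrak g_1=0$ and $\mathfrak g_2=\operatorname{Lie}N$, the trace computation identifying $\psi_f$ with $\psi$, and the Jordan type read off from $f^2=0$ and $\operatorname{rank} f=n$. Your Step 3 on independence of $\psi_0$ and of conventions is a harmless addition that the paper does not need.
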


\begin{proof}
Use the trace pairing to identify $\mathfrak{gl}_{2n}$ with its dual, and set
\[
  f=\begin{pmatrix}0&0\\ I_n&0\end{pmatrix},
  \qquad
  h=\begin{pmatrix}I_n&0\\0&-I_n\end{pmatrix}.
\]
Then $[h,f]=-2f$.  If
$\mathfrak g_j=\{Y:[h,Y]=jY\}$, then
\[
  \mathfrak g_2=
  \left\{\begin{pmatrix}0&X\\0&0\end{pmatrix}:X\in\Mat_n(F)\right\},
  \qquad \mathfrak g_1=0,
\]
and $\mathfrak g_j=0$ for $j>2$.  Hence the unipotent subgroup appearing in
the generalized Whittaker model attached to the neutral pair $(h,f)$ is
$\exp(\mathfrak g_2)=N$.  For
$Y=\begin{psmallmatrix}0&X\\0&0\end{psmallmatrix}$, its character is
\[
  \psi_f(\exp Y)
  =\psi_0\bigl(\tr(fY)\bigr)
  =\psi_0(\operatorname{Tr}X)
  =\psi(n(X)).
\]
Finally, $f^2=0$ and $\operatorname{rank}(f)=n$, so $f$ has exactly $n$ Jordan
blocks, all of size $2$.
\end{proof}

\begin{theorem}
\label{thm:orbit-criterion}
Let $\pi$ be an irreducible admissible representation of $\GL_{2n}(F)$, and
write
\[
  \lambda(\pi)^\top=(a_1,a_2,\dots)
\]
for the transpose of the partition attached to its wave-front set.  Then
\[
  \pi_{N,\psi}=0
  \quad\Longleftrightarrow\quad
  a_1\geq n+1.
\]
\end{theorem}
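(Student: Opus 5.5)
By Lemma~\ref{lem:character}, $\pi_{N,\psi}$ is the generalized Whittaker model attached to $\cO_{(2^n)}$. Lemma~\ref{lem:closure} then says it is nonzero if and only if $(2^n)\leq\lambda(\pi)$ in the dominance order. The theorem therefore reduces to a purely combinatorial statement about partitions $\lambda$ of $2n$:
\[
  (2^n)\leq\lambda
  \quad\Longleftrightarrow\quad
  (\lambda^\top)_1\leq n .
\]
Here $(\lambda^\top)_1$ is the number of nonzero parts of $\lambda$.

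To prove the equivalence I will use the standard fact that transposition reverses dominance on partitions of the same integer: $\mu\leq\lambda$ if and only if $\lambda^\top\leq\mu^\top$. This can be cited, for instance from Macdonald's book, or proved geometrically via $\dim\ker X^j=\sum_{i\leq j}(\lambda^\top)_i$ together with the rank description of orbit closures. Applying it with $\mu=(2^n)$, and noting $(2^n)^\top=(n,n)$, the condition $(2^n)\leq\lambda$ becomes $\lambda^\top\leq(n,n)$. Write $\lambda^\top=(a_1,a_2,\dots)$ with $\sum a_i=2n$. Dominance by $(n,n)$ means $a_1\leq n$ and $a_1+a_2\leq 2n$, together with the conditions on longer partial sums.
\begin{itemize}
\item[(a)] If $a_1\leq n$, every partial sum is at most $2n$. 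So all the conditions hold, the second and later ones trivially.
\item[(b)] If $a_1\geq n+1$, the first inequality fails.
\end{itemize}
Hence $(2^n)\leq\lambda(\pi)$ if and only if $a_1\leq n$. Negating both sides gives $\pi_{N,\psi}=0$ if and only if $a_1\geq n+1$.

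Alternatively, one can skip the duality fact and argue directly. The dominance condition $(2^n)\leq\lambda$ reads $\sum_{i\leq k}\lambda_i\geq 2k$ for $k\leq n$ and $\sum_{i\leq k}\lambda_i\geq 2n$ for $k>n$. If $\lambda$ has at most $n$ parts, the tail condition is automatic; for $k\leq n$ it follows because the average of the $k$ largest parts is at least the overall average $2n/\ell(\lambda)\geq 2$. Conversely, if $\lambda$ has at least $n+1$ parts, then the $n+1$ parts $\lambda_{n+1},\dots$ onward contribute at least $1$, so $\sum_{i\leq n}\lambda_i\leq 2n-1<2n$, and the condition at $k=n$ fails.

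The only step with real content is making sure the ordering convention in Lemma~\ref{lem:closure} matches the dominance/closure order on $\mathfrak{gl}_{2n}$. It does, since the paper defines $\mu\leq\lambda$ by $\cO_\mu\subseteq\overline{\cO_\lambda}$, which for $\GL_m$ is the dominance order by Gerstenhaber's theorem. Beyond that, the argument is routine combinatorics.
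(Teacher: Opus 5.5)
Your proposal is correct and follows the paper's proof exactly: you reduce via Lemmas~\ref{lem:character} and~\ref{lem:closure} to $(2^n)\leq\lambda(\pi)$, transpose using order-reversal and $(2^n)^\top=(n,n)$, and observe that $\lambda^\top\leq(n,n)$ holds if and only if $a_1\leq n$. Your alternative direct argument, which counts the parts of $\lambda$, is also valid and avoids the duality fact. The only point to tidy there is the case $\ell(\lambda)<k\leq n$, where the bound is simply $2n\geq 2k$.
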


\begin{proof}
By Lemmas \ref{lem:closure} and \ref{lem:character},
\[
  \pi_{N,\psi}\neq0
  \quad\Longleftrightarrow\quad
  (2^n)\leq\lambda(\pi).
\]
Transposition reverses dominance, and $(2^n)^\top=(n,n)$, therefore
\[
  (2^n)\leq\lambda(\pi)
  \quad\Longleftrightarrow\quad
  \lambda(\pi)^\top\leq(n,n).
\]
For a partition $\alpha=(a_1,a_2,\dots)$ of $2n$, the inequality
$\alpha\leq(n,n)$ is equivalent to $a_1\leq n$.  This gives
the assertion.
\end{proof}

\section{The M\oe glin--Waldspurger algorithm and the vanishing criterion}

Write $\nu(g)=|\det g|_F$. In this section we recall the M\oe glin--Waldspurger algorithm and deduce from it
an explicit vanishing criterion for the twisted Jacquet module $\pi_{N, \psi}$.

\begin{definition}[Segments and multisegments]
\label{def:segments}
A segment is
\[
  \Delta=[a,b]_\rho
  =\{\nu^a\rho,\nu^{a+1}\rho,\dots,\nu^b\rho\},
\]
where \(d_\rho\) is the integer such that \(\rho\) is an irreducible
supercuspidal representation of \(\GL_{d_\rho}(F)\), and
\(b-a\in\mathbb Z_{\geq0}\).  For any segment
$\Delta=[a,b]_\rho$, write
\[
  b(\Delta)=a,\qquad e(\Delta)=b,\qquad \len(\Delta)=e(\Delta)-b(\Delta)+1.
\]
Thus \(b(\Delta)\) is the left endpoint and \(e(\Delta)\) is the right
endpoint.  Let $\delta(\Delta)$ denote the associated essentially
square-integrable representation.

For segments on the same cuspidal line, set
\[
  \Delta\prec\Delta'
  \quad\Longleftrightarrow\quad
  b(\Delta)<b(\Delta')\leq e(\Delta)+1\leq e(\Delta').
\]
Segments on different cuspidal lines are never related by
$\prec$.

A multisegment is a finite formal sum of segments.  Write
\(\mathfrak m=\Delta_1+\cdots+\Delta_t\) in any order satisfying
\begin{equation}\label{eqn: seg-order}
  i<j \quad\Longrightarrow\quad \Delta_i\not\prec\Delta_j .
\end{equation}
Let $L(\mathfrak m)$ denote the unique irreducible quotient of
$\delta(\Delta_1)\times\cdots\times\delta(\Delta_t)$ which is called a standard module; see
\cite{Bor79} for the Langlands classification.  This quotient is independent
of the chosen order satisfying the displayed condition; see
\cite[Theorem~6.1]{Zel80}.  The
Zelevinsky dual multisegment
$\mathfrak m^{\mathrm t}$ is characterized by
\[
  Z(\mathfrak m)=L(\mathfrak m^{\mathrm t}),
\]
where $Z(\mathfrak m)$ is the Zelevinsky irreducible subrepresentation; see
\cite[Theorem 6.1]{Zel80} and \cite[\S 2]{Deng23}.

\end{definition}

\subsection{The algorithm}

The algorithm is applied separately on each cuspidal line.  Equal segments are
kept as separate rows, and the procedure is repeated after the selected rows
are truncated.

\begin{definition}[M\oe glin--Waldspurger algorithm]
\label{def:MW-algorithm}
Start with a nonempty multisegment on one cuspidal line.
The ordering used above to write the standard module plays no role in the
choices below.
\begin{enumerate}[label=\textup{(\roman*)},leftmargin=2.2em]
  \item Let $d$ be the largest right endpoint.  Among the segments ending at
  $d$, choose $\Delta_1=[a_1,d]$ with $a_1$ maximal.
  \item Having chosen
  $\Delta_j=[a_j,d-j+1]$, choose, if possible, a segment
  $\Delta_{j+1}=[a_{j+1},d-j]$ which linked-precedes \(\Delta_j\), that is,
  satisfies \(\Delta_{j+1}\prec\Delta_j\), with \(a_{j+1}\) maximal.  Stop after
  $r$ selections.
  \item Record the segment $[d-r+1,d]$ in $\mathfrak m^{\mathrm t}$.
  Replace each selected $[a_j,b_j]$ by $[a_j,b_j-1]$, deleting an empty
  segment.
  \item Repeat (i)-(iii) until no segment remains.
\end{enumerate}
We call (i)-(iii) a pass or an MW pass in later sections.
The length of the segment recorded in a pass is exactly the number of segments
selected in that pass.
\end{definition}

If $\alpha=(\alpha_1,\alpha_2,\dots)$ is a partition, we write
$\alpha_1$ also as $(\alpha)_1$ for its first part.

\begin{theorem}
\label{thm:MW-partition}
Let $\pi=L(\mathfrak m)$ be an irreducible representation of $\GL_m(F)$.  For
each segment $\Gamma=[c,d]_\rho$ in $\mathfrak m^{\mathrm t}$, place
$\len(\Gamma)=d-c+1$ into a multiset with multiplicity $d_\rho$.  After
arranging all these integers in decreasing order, one obtains
\[
  \lambda(\pi)^\top.
\]
In particular, 
\[
  \bigl(\lambda(\pi)^\top\bigr)_1
  =\max_{\Gamma\in\mathfrak m^{\mathrm t}}\len(\Gamma).
\]
\end{theorem}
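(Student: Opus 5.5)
The plan is to rewrite $\pi$ in Zelevinsky's normalization and use the behaviour of wave-front sets under parabolic induction. By Definition~\ref{def:segments}, $\pi=L(\mathfrak m)=Z(\mathfrak n)$ with $\mathfrak n=\mathfrak m^{\mathrm t}$. Write $\mathfrak n=\Gamma_1+\cdots+\Gamma_k$ with $\Gamma_i=[c_i,d_i]_{\rho_i}$. Then $Z(\mathfrak n)$ occurs with multiplicity one in the product
\[
  \Pi:=Z(\Gamma_1)\times\cdots\times Z(\Gamma_k),
\]
where each factor is taken on $\GL_{\len(\Gamma_i)d_{\rho_i}}(F)$. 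The last assertion of the theorem follows from the first by taking the largest part, so I only need to identify $\lambda(\pi)^\top$.

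\textbf{Step 1: the building blocks.} I first compute the wave-front set of a single $Z(\Gamma)$, where $\Gamma=[c,d]_\rho$ and $\ell=\len(\Gamma)$. If $d_\rho=1$, then $Z(\Gamma)$ is a character of $\GL_\ell(F)$. Its wave-front orbit is the zero orbit, with partition $(1^\ell)$ and transpose $(\ell)$. For general $\rho$, I would show the orbit has partition $(d_\rho^{\,\ell})$, so the transpose is $(\ell^{d_\rho})$, which is exactly the multiset prescribed in the statement. I would argue this through the local character expansion: $Z(\Gamma)$ is the Jacquet--Langlands-type Speh/Zelevinsky representation with cuspidal support $\rho\nu^c,\dots,\rho\nu^d$. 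Alternatively, I would quote \cite[II.2]{MW87}, where this case is done by computing highest Bernstein--Zelevinsky derivatives. Each highest derivative of $Z(\Gamma)$ has order $d_\rho$, and $Z(\Gamma)$ has $\ell$ of them successively.

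\textbf{Step 2: induction.} By the compatibility of local character expansions with parabolic induction, the leading orbits of $\Pi$ form the induced orbit $\operatorname{Ind}_{L}^{G}$ of the product of the leading orbits of the factors (van Dijk's formula for characters of induced representations). For $\GL$, induction of orbits is concatenation of the multisets of columns, that is, the union of the transposed partitions. So the leading orbit of $\Pi$ has transpose equal to the union of the multisets $(\len(\Gamma_i)^{d_{\rho_i}})$. Moreover its leading coefficient is $1$, being the product of the leading coefficients of the factors, which are $1$ by Lemma~\ref{lem:closure}.

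\textbf{Step 3: the main obstacle.} It remains to show that the constituent $Z(\mathfrak n)$, and not some other subquotient of $\Pi$, carries this leading orbit. The other subquotients of $\Pi$ are $Z(\mathfrak n')$ with $\mathfrak n'$ strictly smaller than $\mathfrak n$ in Zelevinsky's order, obtained by elementary operations on linked segments. Such operations replace two segments by their union and intersection, which strictly increases the column multiset in dominance. So I would argue by induction on the Zelevinsky order. By the induction hypothesis, each $Z(\mathfrak n')$ has leading orbit transposed to the column multiset of $\mathfrak n'$. Since transposition reverses dominance, that orbit lies strictly below the orbit computed in Step~2. Character expansions are additive in the Grothendieck group, and the coefficient of the top orbit in $\Pi$ is $1$. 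Hence the top orbit must come from $Z(\mathfrak n)$, which gives the claimed $\lambda(\pi)^\top$.

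The delicate point is the monotonicity claim: every $\mathfrak n'<\mathfrak n$ has a column multiset that is strictly larger in dominance. I would check this carefully for a single elementary operation $\Delta,\Delta'\mapsto\Delta\cup\Delta',\Delta\cap\Delta'$ on one cuspidal line. If this check becomes unwieldy, I would fall back on citing the Proposition of \cite[II.2]{MW87} directly, since it carries out precisely this argument.
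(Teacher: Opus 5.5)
Your proposal is correct and follows the same route as the paper. You write $\pi=Z(\mathfrak m^{\mathrm t})$, take the rectangular orbit $(d_\rho^{\,\len(\Gamma)})$ for each single segment from \cite{MW87}, and pass to the general case by nilpotent induction, which for $\GL$ concatenates the column multisets; this is exactly the paper's ``add the $j$-th parts'' rule. Your Step~3 is also sound: via \cite[Theorem~7.1]{Zel80}, elementary operations strictly raise the column multiset in dominance, so every other constituent of $\Pi$ has character-expansion support strictly below the induced orbit and cannot cancel its coefficient. That step makes explicit the triangularity which the paper leaves inside its citation of \cite[II.2]{MW87}.
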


\begin{proof}
Put \(\mathfrak n=\mathfrak m^{\mathrm t}\).  Then
\(\pi=L(\mathfrak m)=Z(\mathfrak n)\).  M\oe glin--Waldspurger compute the
maximal nilpotent orbit for \(Z(\mathfrak n)\) in terms of the segments of
\(\mathfrak n\); see
\cite[Chapitre~II, \S II.2, Proposition and proof, pp.~444--446]{MW87}.
For a single segment \(\Gamma=[c,d]_\rho\), the computation on p.~445 of loc.cit.
gives the rectangular partition
\[
  (d_\rho^{\,\len(\Gamma)}).
\]
For a general multisegment, the proof of loc.cit. obtains the orbit by nilpotent
 induction from  these rectangular orbits.  For
\(\GL_m\), nilpotent induction has the following effect on partitions: the
\(j\)-th part of the induced partition is the sum of the \(j\)-th parts of the
inducing partitions, where a partition contributes \(0\) once it has no
\(j\)-th part; see \cite[Chapitre~I, \S7.1, p.~121]{Spa82}.   Hence, if \(\lambda=\lambda(\pi)\), then
\[
  \lambda_j=\sum_{\Gamma\in\mathfrak n,\ \len(\Gamma)\geq j} d_\rho
  \qquad (j\geq1).
\]
By Lemma~\ref{lem:unique-wf-orbit}, this gives the maximal orbit in the wave-front
set $\mathrm{WF}(\pi)$.  The formula says exactly that \(\lambda^\top\) is obtained by
listing each length \(\len(\Gamma)\), for \(\Gamma\in\mathfrak m^{\mathrm t}\),
with multiplicity \(d_\rho\), and then arranging in decreasing order.
\end{proof}

\begin{definition}
\label{def:rMW}
Let 
$
  r_{\MW}(\mathfrak m)
  =\max_{\Gamma\in\mathfrak m^{\mathrm t}}\len(\Gamma).
$
\end{definition}

\begin{theorem}
\label{thm:corrected}
Let $\pi=L(\mathfrak m)$ be an irreducible admissible representation of
$\GL_{2n}(F)$.  Then
\[
  \pi_{N,\psi}=0
  \quad\Longleftrightarrow\quad
  r_{\MW}(\mathfrak m)\geq n+1.
\]
Equivalently,
\[
  \pi_{N,\psi}=0
  \quad\Longleftrightarrow\quad
  \mathfrak m^{\mathrm t}
  \text{ contains a segment of length at least }n+1.
\]
\end{theorem}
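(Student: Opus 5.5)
The argument is a direct combination of Theorem~\ref{thm:orbit-criterion} and Theorem~\ref{thm:MW-partition}. Theorem~\ref{thm:orbit-criterion} gives
\[
\pi_{N,\psi}=0 \iff \bigl(\lambda(\pi)^\top\bigr)_1\geq n+1 .
\]
Theorem~\ref{thm:MW-partition} identifies $\bigl(\lambda(\pi)^\top\bigr)_1$ with $\max_{\Gamma\in\mathfrak m^{\mathrm t}}\len(\Gamma)$. By Definition~\ref{def:rMW}, this maximum is $r_{\MW}(\mathfrak m)$. Substituting yields the first equivalence. The second formulation is then a restatement: the maximum length of a segment in $\mathfrak m^{\mathrm t}$ is at least $n+1$ exactly when some segment of $\mathfrak m^{\mathrm t}$ has length at least $n+1$.

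The one point to check is the cuspidal degrees $d_\rho$. In Theorem~\ref{thm:MW-partition} each length $\len(\Gamma)$ is repeated $d_\rho$ times in $\lambda(\pi)^\top$. Repetition affects only the multiplicities, so the largest part of $\lambda(\pi)^\top$ is still the largest value $\len(\Gamma)$ among segments of $\mathfrak m^{\mathrm t}$, with no factor of $d_\rho$. This matches the ``in particular'' clause of Theorem~\ref{thm:MW-partition}, so the criterion needs no adjustment when $d_\rho>1$.

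No substantive obstacle is expected at this point. All of the depth lies in the results being combined: the closure theorem of Gomez--Gourevitch--Sahi, used in Lemma~\ref{lem:closure}, and the M\oe glin--Waldspurger computation of the maximal orbit via the Zelevinsky dual, used in Theorem~\ref{thm:MW-partition}. Both are taken as given here. The only care needed is to keep the convention straight that $\mathfrak m^{\mathrm t}$ is defined by $Z(\mathfrak m)=L(\mathfrak m^{\mathrm t})$, so that $\pi=L(\mathfrak m)=Z(\mathfrak m^{\mathrm t})$. This is exactly the form to which the M\oe glin--Waldspurger computation applies, and it is how Theorem~\ref{thm:MW-partition} was set up.
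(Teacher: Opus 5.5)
Your proposal is correct and is exactly the paper's argument. Theorem~\ref{thm:MW-partition} identifies the first part of $\lambda(\pi)^\top$ with $r_{\MW}(\mathfrak m)$, and the statement is then Theorem~\ref{thm:orbit-criterion} rewritten in terms of the Zelevinsky dual. Your check that the $d_\rho$-fold repetition leaves the largest part unchanged is correct, though already covered by the ``in particular'' clause.
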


\begin{proof}
By Theorem~\ref{thm:MW-partition}, the first part of
$\lambda(\pi)^\top$ is $r_{\MW}(\mathfrak m)$.  The assertion is therefore
Theorem~\ref{thm:orbit-criterion} expressed in multisegment.
\end{proof}

\begin{theorem}
\label{thm:vanishing-parabolic-subquotient}
Let \(\pi\) be an irreducible admissible representation of
\(\GL_{2n}(F)\).  If \(\pi_{N,\psi}=0\), then there are irreducible
representations \(\pi_1,\pi_2\) of \(\GL_n(F)\), with \(\pi_1\)
one-dimensional, such that \(\pi\) is an irreducible subquotient of the
normalized induction
\[
  \pi_1\times\pi_2
  =
  \Ind_{P_{n,n}}^{\GL_{2n}(F)}(\pi_1\boxtimes\pi_2).
\]
Here \(P_{n,n}\) denotes the standard parabolic subgroup of
\(\GL_{2n}(F)\) with Levi subgroup \(\GL_n(F)\times\GL_n(F)\).
\end{theorem}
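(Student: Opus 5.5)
By Theorem~\ref{thm:corrected}, the hypothesis \(\pi_{N,\psi}=0\) means that \(\mathfrak n:=\mathfrak m^{\mathrm t}\) contains a segment \(\Gamma=[c,d]_\rho\) with \(\len(\Gamma)\geq n+1\). Since \(\pi=Z(\mathfrak n)\) is a representation of \(\GL_{2n}(F)\), the segments of \(\mathfrak n\) satisfy \(\sum_{\Gamma'\in\mathfrak n}\len(\Gamma')d_{\rho'}=2n\). Hence \(\len(\Gamma)d_\rho\geq n+1\) forces \(d_\rho=1\), because \(2(n+1)>2n\). Thus \(\rho=\chi\) is a character of \(F^\times\), \(\Gamma\) is the unique segment of \(\mathfrak n\) of length \(>n\), and \(Z(\Gamma)=\chi\nu^{(c+d)/2}\circ\det\) is a character of \(\GL_{\len(\Gamma)}(F)\). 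The plan is to cut \(\Gamma\) into two pieces, a piece \(\Gamma_1\) of length exactly \(n\) and a remainder \(\Gamma_2\), and realize \(\pi\) inside \(Z(\Gamma_1)\times Z(\mathfrak n')\) for a suitable multisegment \(\mathfrak n'\) on \(\GL_n\).

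Concretely, I will take \(\Gamma_1=[d-n+1,d]_\chi\) and \(\Gamma_2=[c,d-n]_\chi\), which is nonempty since \(\len(\Gamma)\geq n+1\). I then set \(\mathfrak n'=\Gamma_2+(\mathfrak n-\Gamma)\), a multisegment of total degree \(2n-n=n\). I define \(\pi_1=Z(\Gamma_1)\), a one-dimensional representation of \(\GL_n(F)\), and \(\pi_2=Z(\mathfrak n')\), an irreducible representation of \(\GL_n(F)\).

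The key step is to show that \(Z(\mathfrak n)\) is a subquotient of \(Z(\Gamma_1)\times Z(\mathfrak n')\). I will use the Zelevinsky standard module picture. \(Z(\mathfrak n)\) occurs with multiplicity one in \(\zeta(\mathfrak n)=Z(\Delta_1)\times\cdots\times Z(\Delta_k)\), and in the Grothendieck group the irreducible constituents of \(\zeta(\mathfrak n)\) are the \(Z(\mathfrak p)\) with \(\mathfrak p\) obtained from \(\mathfrak n\) by elementary operations. Alternatively, I would argue directly: \(Z(\Gamma)\hookrightarrow Z(\Gamma_1)\times Z(\Gamma_2)\) (in Zelevinsky's ordering, the character of \(\GL_{\len\Gamma}\) embeds in the product of the two smaller characters). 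Ordering the segments of \(\mathfrak n\) suitably, \(Z(\mathfrak n)\) is then a subquotient of
\[
Z(\Gamma)\times\prod Z(\Delta)\subseteq Z(\Gamma_1)\times Z(\Gamma_2)\times\prod Z(\Delta).
\]
By exactness of induction, \(Z(\mathfrak n)\) is therefore a subquotient of \(Z(\Gamma_1)\times\sigma\) for some irreducible subquotient \(\sigma\) of \(Z(\Gamma_2)\times\prod Z(\Delta)\). Taking \(\pi_2=\sigma\) is enough for the theorem; we need not identify \(\sigma=Z(\mathfrak n')\), although that refinement, presumably needed for Corollary~\ref{cor:unique-nonzero-parabolic-subquotient}, would follow by comparing supports and the Zelevinsky ordering.

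The main obstacle is this subquotient step, that is, legitimately realizing \(Z(\mathfrak n)\) inside a product beginning with \(Z(\Gamma)\). I would first appeal to the fact that \(Z(\mathfrak n)\) is the unique irreducible subrepresentation of \(\zeta(\mathfrak n)\) taken in a "standard" order (Zelevinsky, Theorem~6.1). Since the factors of the product commute in the Grothendieck group, \(Z(\mathfrak n)\) is a subquotient of the product taken in any order, in particular the one with \(Z(\Gamma)\) first. Then exactness of parabolic induction, together with the fact that every irreducible subquotient of \(\pi_1\times\Pi\) is a subquotient of \(\pi_1\times\sigma\) for some irreducible constituent \(\sigma\) of \(\Pi\), finishes the argument.
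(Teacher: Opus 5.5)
Your proof is correct and is essentially the paper's argument: the same cut of $\Gamma$ into $\Gamma_1=[d-n+1,d]_\chi$ and $\Gamma_2=[c,d-n]_\chi$, the same reordering in the Grothendieck group to put $Z(\Gamma)$ first, and the same passage to an irreducible constituent $\sigma$ of $Z(\Gamma_2)\times\prod Z(\Delta)$ without identifying it. One harmless slip: since $\Gamma_1$ is the upper piece, $Z(\Gamma)$ is a quotient of $Z(\Gamma_1)\times Z(\Gamma_2)$, not a subrepresentation (the embedding is $Z(\Gamma)\hookrightarrow Z(\Gamma_2)\times Z(\Gamma_1)$), so your displayed $\subseteq$ should say ``subquotient'', which is all the argument uses.
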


\begin{proof}
Write \(\pi=L(\mathfrak m)\).  By Theorem~\ref{thm:corrected},
\(\mathfrak m^{\mathrm t}\) contains a segment
\[
  \Gamma=[a,b]_\rho
\]
of length \(r=b-a+1\geq n+1\).  Since \(\pi\) is a representation of
\(\GL_{2n}(F)\), the degree of \(\Gamma\) is at most \(2n\).  Hence
\(d_\rho=1\), otherwise \(d_\rho r\geq2(n+1)>2n\).  Thus \(\rho\) is a
character of \(F^\times\).

Let
\[
  \Gamma_1=[b-n+1,b]_\rho,\qquad
  \Gamma_2=[a,b-n]_\rho.
\]
Then \(\Gamma_1\) has length \(n\), and \(Z(\Gamma_1)\) is the
one-dimensional representation
\[
  g\longmapsto
  \rho(\det g)\,|\det g|_F^{(2b-n+1)/2}
  \qquad(g\in\GL_n(F)).
\]
 Zelevinsky's
two-segment calculation \cite[\S4.6, Proposition]{Zel80} implies
that \(Z(\Gamma)\) is an irreducible subquotient of
\(Z(\Gamma_1)\times Z(\Gamma_2)\).

Now \(\pi=Z(\mathfrak m^{\mathrm t})\).  Hence \(\pi\) is a composition factor
of an induction of the representation  formed by \(Z(\Delta)\), with \(\Delta\)
running through the segments of \(\mathfrak m^{\mathrm t}\).  For
\(\GL_m\), the semisimplification of such an induction is unchanged by permuting
the factors, a consequence of
\cite[Theorem~1.9]{Zel80}.  We may therefore put
\(Z(\Gamma)\) first.  Since \(Z(\Gamma)\) is a subquotient of
\(Z(\Gamma_1)\times Z(\Gamma_2)\), exactness of normalized parabolic induction
shows that \(\pi\) is a subquotient of
\[
  Z(\Gamma_1)\times I_2,
\]
where \(I_2\) is the normalized induction of \(Z(\Gamma_2)\) together with the
representations \(Z(\Delta)\) attached to the remaining segments of
\(\mathfrak m^{\mathrm t}\).  Note that 
\[
  (r-n)+(2n-r)=n,
\]
so \(I_2\) is a finite-length representation of \(\GL_n(F)\).

Take a composition series
\[
  0=I_2^0\subset I_2^1\subset\cdots\subset I_2^q=I_2 .
\]
Exactness of normalized parabolic induction gives a filtration of
\(Z(\Gamma_1)\times I_2\) whose successive quotients are
\[
  Z(\Gamma_1)\times (I_2^j/I_2^{j-1})
  \qquad(1\leq j\leq q).
\]
The irreducible representation \(\pi\) is a subquotient of
\(Z(\Gamma_1)\times\pi_2\) for some irreducible constituent
\(\pi_2=I_2^j/I_2^{j-1}\) of \(I_2\).  Put \(\pi_1=Z(\Gamma_1)\).  Then
\(\pi\) is a subquotient of \(\pi_1\times\pi_2\), and \(\pi_1\) is
one-dimensional.
\end{proof}

\begin{corollary}
\label{cor:unique-nonzero-parabolic-subquotient}
Let \(\pi_1\) be a one-dimensional representation of \(\GL_n(F)\), let
\(\pi_2\) be an irreducible admissible representation of \(\GL_n(F)\), and put
\[
  I=\pi_1\times\pi_2
  =
  \Ind_{P_{n,n}}^{\GL_{2n}(F)}(\pi_1\boxtimes\pi_2).
\]
Then, as a representation of the diagonal subgroup \(\Delta\GL_n(F)\),
\[
  I_{N,\psi}\simeq \pi_1\otimes\pi_2 
\]
which is irreducible.
Consequently, exactly one
irreducible subquotient of \(I\) has nonzero twisted Jacquet module \(\pi_1\otimes\pi_2\).

More explicitly, write \(\pi_1=Z(\Gamma)\), where \(\Gamma\) is the segment of
length \(n\) attached to the character \(\pi_1\), and 
\(\pi_2=Z(\mathfrak n)=L(\mathfrak n^{\mathrm t})\).  Then the unique
irreducible subquotient of \(I\) with nonzero twisted Jacquet module 
\[
  Z(\Gamma+\mathfrak n).
\]
\end{corollary}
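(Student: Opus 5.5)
The first assertion is a Mackey-theory computation, which I would carry out through the Bruhat filtration of \(I\) restricted to \(P=P_{n,n}\). The double cosets \(P\backslash G/P\) are indexed by \(k=0,\dots,n\), with \(k\) the rank of the "off-diagonal block" of the Weyl representative \(w_k\). Each piece of the geometric lemma (Bernstein--Zelevinsky) contributes to \(I_{N,\psi}\) a term of the form
\[
  \bigl(\text{a twisted Jacquet functor applied to } \operatorname{ind}_{P\cap w_kPw_k^{-1}}^{P}(w_k\cdot(\pi_1\boxtimes\pi_2))\bigr).
\]
Since \((N,\psi)\)-coinvariants are exact, it suffices to compute each graded piece.

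For the open cell \(w_n=\begin{psmallmatrix}0&I\\I&0\end{psmallmatrix}\), \(Pw_nP=Pw_nN\) and \(N\) acts simply transitively. The \((N,\psi)\)-coinvariants of \(\operatorname{ind}\) over this cell are the restriction of \(\pi_1\boxtimes\pi_2\) to the stabilizer of \(\psi\), namely the diagonal \(\Delta\GL_n\), up to a modulus character that I expect to cancel for normalized induction. This gives \(\pi_1\otimes\pi_2\) (the restriction of \(\pi_1\boxtimes\pi_2\) to \(\Delta\GL_n\)), which is irreducible because \(\pi_1\) is a character and \(\pi_2\) is irreducible.

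For the other cells \(k<n\), the orbit of \(M\) on \(N\)-characters forces the relevant coinvariants to factor through a Jacquet module of \(\pi_1\boxtimes\pi_2\) along a proper parabolic, twisted by a degenerate character of rank \(<n\) on a unipotent subgroup. The key point is that \(\psi\) restricted to \(N\cap w_kNw_k^{-1}\) is nontrivial on a piece where the inducing representation, being a character \(\pi_1\) on the relevant \(\GL\)-block, has its unipotent radical acting trivially. Concretely, the \(\psi\)-twist involves a root subgroup inside the first \(\GL_n\) factor (via conjugation), on which the one-dimensional \(\pi_1\) is invariant. Hence the coinvariants vanish. I expect this cell-by-cell vanishing, done carefully with Bernstein--Zelevinsky's geometric lemma for twisted Jacquet functors, to be the main obstacle.

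The uniqueness consequence then follows from exactness. The length of \(I_{N,\psi}\), which is one, bounds the number of composition factors with nonzero coinvariants, and such a factor exists since \(I_{N,\psi}\ne0\).

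For the explicit identification, I will use Theorem~\ref{thm:corrected}. The multisegment \(\Gamma+\mathfrak n\) has \(Z(\Gamma+\mathfrak n)\) occurring in \(Z(\Gamma)\times Z(\mathfrak n)\) by Zelevinsky's theory. Every other constituent is \(Z(\mathfrak p)\) with \(\mathfrak p\) obtained by elementary operations that join linked segments. I need \(Z(\Gamma+\mathfrak n)\) to have all segments of length \(\le n\), which holds because \(\len(\Gamma)=n\) and the segments of \(\mathfrak n\) have degree at most \(n\). Conversely, I must show every other constituent contains a segment of length \(\ge n+1\), i.e.\ has zero twisted Jacquet module. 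This follows from uniqueness once the first claim holds: \(Z(\Gamma+\mathfrak n)\) is nonvanishing by Theorem~\ref{thm:corrected}, so it must be the unique one.
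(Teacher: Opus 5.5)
Your plan is essentially the paper's proof: a Bernstein--Zelevinsky filtration of $I|_N$ (the paper stratifies the Grassmannian $G/P_{n,n}$ by $r(W)=\dim\operatorname{pr}_2(W)$ instead of using Bruhat cells), with the open stratum (trivial stabilizer) giving $\pi_1\otimes\pi_2$, where the modulus factors do cancel on $\Delta\GL_n(F)$, and the lower strata killed by a stabilizer element with $\psi\neq1$ acting trivially on the fiber because $\pi_1$ is a character, followed by exactness, \cite[Theorem~7.1]{Zel80} and Theorem~\ref{thm:corrected} for the identification. The step you leave heuristic has to hold at every point $W$ of a lower stratum, not only at the representative $w_k$; the paper handles all $W$ at once by taking $n(X)$ with $\operatorname{im}X\subseteq W\cap(F^n\oplus0)$ and $\psi_0(\operatorname{Tr}X)\neq1$, which fixes $W$, acts trivially on $V/W$ (where $V=F^n\oplus F^n$) and unipotently on $W$, and this subgroup combines your two mechanisms (elements acting trivially through $N$, and elements landing in the first factor's unipotent radical), each of which alone already fails at some point of the stratum $r(W)=1$ when $n=2$.
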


\begin{proof}
We first compute the twisted Jacquet module of \(I\).  Let
\(G=\GL_{2n}(F)\) and \(V=F^n\oplus F^n\).  Let
\[
  N=\left\{
  n(X):=\begin{pmatrix}I_n&X\\0&I_n\end{pmatrix}
  :X\in\Mat_n(F)\right\},
  \qquad
  \psi(n(X))=\psi_0(\operatorname{Tr}X).
\]
Identify \(G/P_{n,n}\) with the Grassmannian of
\(n\)-dimensional subspaces \(W\subset V\).  Let \(\mathcal L_W\) be the
fiber over \(W\) of the bundle realizing the normalized induction \(I\).
Explicitly, choose \(g_W\in G\) with \(W=g_W(F^n\oplus0)\).  If
\(s\in\operatorname{Stab}_G(W)\) and
\[
  g_W^{-1}sg_W=
  \begin{pmatrix}a&*\\0&d\end{pmatrix}\in P_{n,n},
\]
then \(s\) acts on \(\mathcal L_W\) by
\[
  \delta_{P_{n,n}}^{1/2}(g_W^{-1}sg_W)\,
  \pi_1(a)\otimes\pi_2(d).
\]
In particular, if the induced action of \(s\) on \(W\) is unipotent and the
induced action on \(V/W\) is trivial, then it acts trivially on
\(\mathcal L_W\).

By the geometric lemma of Bernstein--Zelevinsky
\cite[Lemma~2.12]{BZ77}, the closed
unions of the \(N\)-stable strata \(r(W)\leq j\) give the filtration of
\(I|_N\), where
\[
  r(W)=\dim\operatorname{pr}_2(W),
  \qquad
  A_W=W\cap(F^n\oplus0).
\]
We identify \(A_W\) with its projection to
the first copy of \(F^n\).
Note that if there exists 
\(h\in\operatorname{Stab}_N(W)\)  which acts trivially on \(\mathcal L_W\) and
\(\psi(h)\neq1\), then the graded piece from the filtration supported on that orbit
contributes nothing to \((N,\psi)\)-coinvariants.  

We first check the boundary defined by \(r(W)<n\).  Then
\(\dim A_W=n-r(W)>0\).  Choose \(X\in\Mat_n(F)\) with image contained in
\(A_W\) and with \(\operatorname{Tr}X\neq0\): for instance, choose
\(a\in A_W\) with a nonzero \(j\)-th coordinate and let \(X\) send the
\(j\)-th basis vector to \(a\) and all other basis vectors to \(0\).  After
replacing \(X\) by a scalar multiple, assume
\(\psi_0(\operatorname{Tr}X)\neq1\).  For every \((u,v)\in W\), one has
\[
  n(X)(u,v)=(u+Xv,v)=(u,v)+(Xv,0),
\]
and \((Xv,0)\in A_W\subset W\).  Thus \(n(X)\) stabilizes \(W\) and acts
trivially on \(\mathcal L_W\), but \(\psi(n(X))\neq1\).  As a consequence,
every \(N\)-orbit in the boundary has zero image in
\((N,\psi)\)-coinvariants.

The open stratum defined by \(r(W)=n\) is the single \(N\)-orbit of
\[
  W_0=0\oplus F^n,
\]
and its stabilizer in \(N\) is trivial.  The contribution of the open stratum is
\[
  C_c^\infty(N)\otimes (V_{\pi_1}\otimes V_{\pi_2}),
\]
with \(N\) acting by left translation on \(C_c^\infty(N)\).  Its
\((N,\psi)\)-coinvariants are \(V_{\pi_1}\otimes V_{\pi_2}\) with the
quotient map given by
\[
  f\otimes v\longmapsto
  \left(\int_N f(n)\psi(n)\,dn\right)v .
\]
The diagonal subgroup \(\Delta\GL_n(F)\) preserves both \(\psi\) and the Haar
measure on \(N\).  For \(g\in\GL_n(F)\), the element
\(\operatorname{diag}(g,g)\) fixes \(W_0\) with the fiber action \(\pi_1(g)\otimes\pi_2(g)\).  Since the
boundary contribution is zero, this gives
\[
  I_{N,\psi}\simeq \pi_1\otimes\pi_2
\]
as a \(\Delta\GL_n(F)\)-module which is
irreducible.

The functor \((-)_{N,\psi}\) is exact.  Applying it to a composition series of
\(I\) gives a filtration of the irreducible module \(I_{N,\psi}\).  Therefore
exactly one irreducible subquotient of \(I\) has nonzero twisted Jacquet
module \(I_{N,\psi}\simeq\pi_1\otimes\pi_2\).

It remains to identify the subquotient.  By Zelevinsky's theorem on
composition factors \cite[Theorem 7.1]{Zel80}, \(Z(\Gamma+\mathfrak n)\)
occurs as an irreducible subquotient of
\[
  Z(\Gamma)\times Z(\mathfrak n)=\pi_1\times\pi_2.
\]
The segment \(\Gamma\) has length \(n\), and every segment of \(\mathfrak n\)
has length at most \(n\), since \(\mathfrak n\) has total degree \(n\).  Hence
the multisegment \(\Gamma+\mathfrak n\) contains no segment of length
\(n+1\).  As
\[
  Z(\Gamma+\mathfrak n)=L\bigl((\Gamma+\mathfrak n)^{\mathrm t}\bigr),
\]
Theorem~\ref{thm:corrected} shows that
\[
  Z(\Gamma+\mathfrak n)_{N,\psi}\neq0.
\]
By the uniqueness just proved, this is the desired subquotient.
\end{proof}

\begin{remark}
The first assertion of
Corollary~\ref{cor:unique-nonzero-parabolic-subquotient}, namely the
calculation
\[
  (\pi_1\times\pi_2)_{N,\psi}\simeq\pi_1\otimes\pi_2,
\]
is also contained in \cite[Proposition~5.1(2)]{HV26}.
\end{remark}

\section{Rankin--Selberg factors and Prasad's conjecture}
\label{sec:L-functions}

Sections 1 and 2 give complete criteria in terms of the wave-front
set and the Zelevinsky classification.  We now collect the $L$-function
preliminaries needed to compare those criteria with the original Langlands
parameter.  We also spell out the implication in Prasad's conjecture.

Fix once and for all one base supercuspidal representation on each cuspidal line and absorb
all integral powers of $\nu$ into the endpoints. 

The following propositions relate the Zelevinsky notion of "linked segments" in terms of the existence of poles of  Rankin-Selberg 
$L$ functions, which we have not seen in the literature.
\begin{proposition}
\label{prop:RS}
Let
$\Delta=[a,b]_\rho$ and $\Delta'=[a',b']_{\rho'}$, with lengths
$\len=b-a+1$ and $\len'=b'-a'+1$.   Then
\begin{equation}
\label{eq:RS}
  L\bigl(s,\delta(\Delta)\times\delta(\Delta')^\vee\bigr)
  =\prod_{k=0}^{\min(\len,\len')-1}
  L\bigl(s+b-a'-k,\rho\times\rho'^\vee\bigr).
\end{equation}
Consequently, this factor has a pole at $s=1$ if and only if
$\Delta\prec\Delta'$.
\end{proposition}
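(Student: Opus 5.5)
The plan is to obtain \eqref{eq:RS} from the Jacquet--Piatetski-Shapiro--Shalika computation of Rankin--Selberg factors for essentially square-integrable representations, and then to read off the pole at $s=1$ from the location of the poles of $L(s,\rho\times\rho'^\vee)$.

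\textbf{Step 1 (the product formula).} The input is the JPSS formula: for segments $\Delta_1,\Delta_2$,
\[
L\bigl(s,\delta(\Delta_1)\times\delta(\Delta_2)\bigr)=\prod_{k=0}^{\min(\len_1,\len_2)-1}L\bigl(s,\nu^{e(\Delta_1)}\rho_1\times\nu^{e(\Delta_2)-k}\rho_2\bigr).
\]
Apply it with $\Delta_1=\Delta$ and $\Delta_2$ the segment of the contragredient. Here
\[
\delta(\Delta')^\vee=\delta\bigl([-b',-a']_{\rho'^\vee}\bigr),
\]
so $e(\Delta_2)=-a'$ and $\len_2=\len'$. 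Pull the powers of $\nu$ out of the cuspidal factor using $L(s,\nu^x\rho\times\nu^y\rho'^\vee)=L(s+x+y,\rho\times\rho'^\vee)$. This gives exactly the factors $L(s+b-a'-k,\rho\times\rho'^\vee)$ for $0\le k\le\min(\len,\len')-1$, which is \eqref{eq:RS}. The only care needed is the endpoint convention: one should check it on the case $\len=\len'=1$, and also against the formula with the roles of the two segments swapped.

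\textbf{Step 2 (poles of the cuspidal factor).} The factor $L(s,\rho\times\rho'^\vee)$ is $1$ unless $\rho'\simeq\rho\nu^{s_0}$ for some $s_0$. If $\Delta,\Delta'$ lie on different cuspidal lines, no such $s_0$ has $s_0\in\mathbb Z$ compatible with a real pole. More precisely, the real poles of $L(s+c,\rho\times\rho'^\vee)$ with $c\in\mathbb Z$ would force $\rho'\simeq\rho\nu^{m}$ with $m$ an integer, i.e.\ the same line. So there is no pole at $s=1$, consistent with the convention that $\prec$ never relates segments on different lines.

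On a common line, the normalization "fix one base $\rho$ and absorb integral powers into endpoints" gives $\rho'=\rho$. Then $L(s,\rho\times\rho^\vee)=(1-q^{-f s})^{-1}$, where $f$ is the torsion number. Its only real pole is the simple pole at $s=0$.

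\textbf{Step 3 (matching with $\prec$).} Hence \eqref{eq:RS} has a pole at $s=1$ if and only if there is some $k=b-a'+1$ with $0\le k\le\min(\len,\len')-1$. Unwinding the three inequalities:
\begin{itemize}[leftmargin=2.2em]
  \item $k\ge0$ is equivalent to $a'\le b+1$;
  \item $k\le\len-1=b-a$ is equivalent to $a<a'$;
  \item $k\le\len'-1=b'-a'$ is equivalent to $b+1\le b'$.
\end{itemize}
Together these say precisely $a<a'\le b+1\le b'$, that is, $\Delta\prec\Delta'$.

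The main obstacle I expect is Step 1, namely pinning down the exact shift in the JPSS formula (whether the beginning or the ending endpoints appear, and the sign in the contragredient). I would settle this by invoking \cite{JPSS}-type multiplicativity directly, and by testing it on $\len=\len'=1$ and on $\Delta=\Delta'$. In the latter case the formula must produce the known pole of $L(s,\delta\times\delta^\vee)$ at $s=0$ and none at $s=1$, matching $\Delta\not\prec\Delta$.
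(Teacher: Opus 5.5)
Your proof is correct. Steps 2 and 3 match the paper's pole analysis exactly: the only real pole of $L(s,\rho\times\rho^\vee)$ is the simple one at $s=0$, the unique candidate index is $k=b-a'+1$, and the three range inequalities unwind to $a<a'\leq b+1\leq b'$.

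The difference is in Step 1. The paper obtains \eqref{eq:RS} on the Galois side. It takes the parameter $\nu^{(a+b)/2}\phi_\rho\otimes S_{\len}$ of $\delta([a,b]_\rho)$, decomposes $S_{\len}\otimes S_{\len'}$ by Clebsch--Gordan, and reads off the shift $\frac{a+b-a'-b'}{2}+\frac{\len+\len'-2-2k}{2}=b-a'-k$. This implicitly uses that the local Langlands correspondence carries Rankin--Selberg factors to $L$-factors of tensor products of parameters.

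You stay on the representation side, using the Jacquet--Piatetski-Shapiro--Shalika computation for essentially square-integrable representations together with $\delta([a',b']_{\rho'})^\vee\simeq\delta([-b',-a']_{\rho'^\vee})$. This avoids the Langlands correspondence entirely and is purely local representation theory. The cost is the bookkeeping you anticipated. The JPSS formula is usually stated under $\len_1\geq\len_2$, so you need the shift $e(\Delta_1)+e(\Delta_2)-k$ to be symmetric in the two segments. It is, because $L(s,\nu^x\rho\times\nu^y\rho')=L(s+x+y,\rho\times\rho')$ depends only on $x+y$; this settles the convention worry you raised, without needing test cases. The Clebsch--Gordan route gives the uniform $\min(\len,\len')$ formula with no case distinction, but relies on the compatibility of the correspondence with $L$-factors.

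One small wording fix in Step 2: a pole at the integer point $s=1$ of $L(s+c,\rho\times\rho'^\vee)$ with $c\in\mathbb Z$ is what forces $\rho'\simeq\rho\nu^{1+c}$ with an integral exponent. An arbitrary real pole would not force this, so the sentence should refer to $s=1$.
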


\begin{proof}
The local Langlands parameter of $\delta([a,b]_\rho)$ is
\[
  \nu^{(a+b)/2}\phi_\rho\otimes S_{\len},
\]
where $\phi_\rho$ is the Weil--Deligne parameter of $\rho$ under the local
Langlands correspondence and $S_r$ is the $r$-dimensional irreducible
algebraic representation of $\mathrm{SL}_2(\mathbb C)$.  The
Clebsch--Gordan decomposition
\[
  S_{\len}\otimes S_{\len'}
  \simeq
  \bigoplus_{k=0}^{\min(\len,\len')-1}
  S_{\len+\len'-1-2k}
\]
gives \eqref{eq:RS}; the resulting shift is
\[
  \frac{a+b-a'-b'}2
  +\frac{\len+\len'-2-2k}{2}
  =b-a'-k.
\]
If $\rho$ and $\rho'$ are on different cuspidal lines, no factor in
\eqref{eq:RS} has a pole at $s=1$.  If
$\rho=\rho'$, then $L(s,\rho\times\rho^\vee)$ has a simple pole at $s=0$.
Thus a pole at $s=1$ occurs exactly when
\[
  k=b-a'+1
\]
for an integer $k$ satisfying
$0\leq k\leq\min(\len,\len')-1$.  These inequalities are equivalent to
\[
  a<a'\leq b+1\leq b',
\]
which is precisely $\Delta\prec\Delta'$.
\end{proof}

\begin{proposition}
\label{prop:positive-pole-pairs}
Let \(\Delta=[a,b]_\rho\) and \(\Delta'=[a',b']_{\rho'}\) be two segments, with
lengths \(\len=b-a+1\) and \(\len'=b'-a'+1\).  
\begin{itemize}
\item If the two segments are on the
same cuspidal line, then for a
positive integer \(t\), the pair \((\Delta,\Delta')\), taken in the order of
the two variables in the following \(L\)-factor, contributes one pole to
\(L(s,\delta(\Delta)\times\delta(\Delta')^\vee)\) at \(s=t\) precisely
when
\begin{equation}
\label{eq:pair-pole-t}
  0\leq t+b-a'\leq\min(\len,\len')-1.
\end{equation}
\item If the two segments are on different cuspidal lines, the pair contributes
no pole at any positive integer.
\end{itemize}

Consequently, for
\(\pi=L(\mathfrak m)\) with
\[
  \mathfrak m=\Delta_1+\cdots+\Delta_q,
  \qquad
  \Delta_i=[a_i,b_i]_{\rho_i},
  \qquad
  \ell_i=b_i-a_i+1,
\]
we have 
\begin{equation}
\label{eq:self-rs-multisegment}
  L(s,\pi\times\pi^\vee)
  =
  \prod_{i,j=1}^q
  L\bigl(s,\delta(\Delta_i)\times\delta(\Delta_j)^\vee\bigr)
  =
  \prod_{i,j=1}^q
  \prod_{k=0}^{\min(\ell_i,\ell_j)-1}
  L\bigl(s+b_i-a_j-k,\rho_i\times\rho_j^\vee\bigr).
\end{equation}
Only pairs on the same cuspidal line contribute poles at positive integers.
\end{proposition}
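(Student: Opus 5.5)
The plan has three parts: reuse the factorization \eqref{eq:RS} from Proposition~\ref{prop:RS}, locate the positive-integer poles of each factor, and multiply over pairs of segments. So I first fix a pair \((\Delta,\Delta')\). By \eqref{eq:RS},
\[
  L\bigl(s,\delta(\Delta)\times\delta(\Delta')^\vee\bigr)
  =\prod_{k=0}^{\min(\len,\len')-1}L\bigl(s+b-a'-k,\rho\times\rho'^\vee\bigr),
\]
and I will examine the factors one at a time.

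\emph{Different cuspidal lines.} Here \(\rho'\) is not of the form \(\nu^{m}\rho\) with \(m\in\mathbb Z\), since we absorbed integral powers of \(\nu\) into the endpoints. I will use the standard fact (Jacquet--Piatetski-Shapiro--Shalika) that for supercuspidal \(\rho,\rho'\) the factor \(L(s,\rho\times\rho'^\vee)\) has poles only where \(\rho\simeq\nu^{-s}\rho'\), i.e. \(\rho'\simeq\nu^{s}\rho\). If such a pole occurred at some \(s_0\) with \(s_0+b-a'-k\in\mathbb Z\), then \(\rho'\) would be an integral unramified twist of \(\rho\). That contradicts the two segments lying on different lines, so there are no poles at positive integers.

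\emph{Same cuspidal line.} Now \(\rho=\rho'\) is the fixed base supercuspidal. \(L(s,\rho\times\rho^\vee)\) has a simple pole at \(s=0\), possibly together with poles at \(s\equiv 0\) modulo \(2\pi i/(d_\rho\log q)\); among real integer points, however, the only pole is \(s=0\). Hence the \(k\)-th factor has a pole at the positive integer \(t\) exactly when \(t+b-a'-k=0\), i.e. \(k=t+b-a'\). This \(k\) must satisfy \(0\le k\le\min(\len,\len')-1\), which is \eqref{eq:pair-pole-t}. At most one \(k\) qualifies and each pole is simple, so the pair contributes exactly one pole at \(s=t\) precisely under \eqref{eq:pair-pole-t}.

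For the multisegment formula \eqref{eq:self-rs-multisegment}, I pass to Langlands parameters. Rankin--Selberg factors are compatible with the local Langlands correspondence for \(\GL\), so \(L(s,\pi\times\pi^\vee)=L(s,\phi_\pi\otimes\phi_\pi^\vee)\). The parameter of \(L(\mathfrak m)\) is \(\phi_\pi=\bigoplus_i\nu^{(a_i+b_i)/2}\phi_{\rho_i}\otimes S_{\ell_i}\), so \(\phi_\pi\otimes\phi_\pi^\vee\) splits as a direct sum over ordered pairs \((i,j)\), and \(L\)-factors are multiplicative in direct sums. Each summand \((i,j)\) is exactly the parameter computed in Proposition~\ref{prop:RS}, which gives \eqref{eq:self-rs-multisegment}. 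The final sentence of the statement then follows from the per-pair analysis.

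The main obstacle is the different-line case: one must invoke the precise pole location of \(L(s,\rho\times\rho'^\vee)\) for supercuspidals, namely poles only at \(s\) with \(\rho'\simeq\nu^{s}\rho\), and rule out a coincidental integral shift.
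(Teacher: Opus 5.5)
Your proposal is correct and follows essentially the same route as the paper: you factor each pair via \eqref{eq:RS}, observe that on a common line only the factor with \(k=t+b-a'\) can give a real integral pole, and obtain \eqref{eq:self-rs-multisegment} by multiplicativity, realized through the Langlands parameter as in the proof of Proposition~\ref{prop:RS}. Your appeal to the Jacquet--Piatetski-Shapiro--Shalika pole locus of \(L(s,\rho\times\rho'^\vee)\) justifies the different-line case, which the paper only asserts.
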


\begin{proof}
This is the same calculation as in Proposition~\ref{prop:RS}.  By
\eqref{eq:RS}, if the two segments are on different cuspidal lines, no factor
has a pole at a positive integer.  On the same cuspidal line, a pole at \(s=t\)
can occur only from a factor
\[
  L(s+b-a'-k,\rho\times\rho^\vee)
\]
with \(0\leq k\leq\min(\len,\len')-1\).  Since
\(L(s,\rho\times\rho^\vee)\) has a simple pole at \(s=0\), this is equivalent
to
\[
  k=t+b-a'.
\]
The required condition is exactly that this integer \(k\) lie in the displayed
range.  Formula \eqref{eq:self-rs-multisegment} follows by multiplicativity of
local \(L\)-functions and Proposition~\ref{prop:RS}.
\end{proof}

\begin{conjecture}\cite[Conjecture 6.3]{HV26}
\label{conj:prasad}
Let $\pi$ be an irreducible smooth representation of $\GL_{2n}(F)$.  Then
\[
  \pi_{N,\psi}=0
\]
if and only if
\[
  \pord_{s=t}L(s,\pi\times\pi^\vee)\geq n-t+1
  \qquad(1\leq t\leq n).
\]
\end{conjecture}

\subsection{The vanishing-to-poles implication}

We first isolate the combinatorial feature of the M\oe glin--Waldspurger
algorithm that forces the poles.  By Proposition \ref{prop:positive-pole-pairs}, we can and will assume that all segments in the next statements are on
one fixed cuspidal line.  

\begin{definition}
Let $c\in\mathbb Z$.  A sequence of segments
\[
  \Delta_i=[a_i,b_i]_\rho,
  \qquad 0\leq i\leq r-1,
\]
is called a \emph{$c$-diagonal ladder} if
\[
  a_0<a_1<\cdots<a_{r-1},
  \qquad
  b_0<b_1<\cdots<b_{r-1},
\]
and
\[
  c+i\in\Delta_i
  \qquad(0\leq i\leq r-1).
\]
\end{definition}

\begin{lemma}[Lifting through one MW pass]
\label{lem:MW-lifting}
Let $\mathfrak a^\dagger$ be obtained from a multisegment $\mathfrak a$ by one
pass of the M\oe glin--Waldspurger algorithm in Definition
\ref{def:MW-algorithm}.  Every $c$-diagonal ladder in
$\mathfrak a^\dagger$ can be lifted to a $c$-diagonal ladder of the same
length in $\mathfrak a$.
\end{lemma}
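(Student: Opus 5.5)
We lift a $c$-diagonal ladder in $\mathfrak a^\dagger$ row by row, and then repair the places where the lift fails. Let the pass select $\Delta_j=[a_j,d-j+1]$ for $1\leq j\leq r$. Since $\Delta_{j+1}\prec\Delta_j$, we have $a_1>a_2>\cdots>a_r$. In $\mathfrak a^\dagger$ these rows become $\Delta_j^\dagger=[a_j,d-j]$, with $\Delta_j^\dagger$ deleted if it is empty, and every other row is unchanged. This gives an injective map from the rows of $\mathfrak a^\dagger$ to the rows of $\mathfrak a$: a truncated row goes back to $\Delta_j$, and an unchanged row goes to itself.

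Given a ladder $\Delta'_0,\dots,\Delta'_{r'-1}$ in $\mathfrak a^\dagger$, the naive lift replaces each $\Delta'_i$ by its preimage. Three of the ladder conditions survive automatically. Left endpoints are unchanged, so they stay strictly increasing. Each preimage contains $\Delta'_i$, so $c+i$ still lies in the $i$-th segment. The rows are distinct because the map is injective. Right endpoints either stay fixed or go up by one. Hence the only possible failure is a tie $b'_i+1=b'_{i+1}$, where $\Delta'_i=\Delta_j^\dagger$ is truncated, so $b'_i=d-j$, and $\Delta'_{i+1}=[a',d-j+1]$ is an unchanged row with $a'>a_j$. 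If instead both rows are truncated, both ends rise by one and the strict inequality is preserved.

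The main step is to rule out or repair such a tie using the maximality in step (i)/(ii) of Definition~\ref{def:MW-algorithm}.

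\emph{Case $j=1$.} The row $\Delta'_{i+1}$ ends at $d$ and has start $a'>a_1$, yet it was not selected. This contradicts the choice of $a_1$ as maximal.

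\emph{Case $j\geq2$, $a'<a_{j-1}$.} Here $a_{j-1}\leq d-j+2=e(\Delta'_{i+1})+1$, so $\Delta'_{i+1}\prec\Delta_{j-1}$. Thus $\Delta'_{i+1}$ was an admissible candidate at step $j$ with a start larger than $a_j$, again a contradiction.

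\emph{Case $j\geq2$, $a'\geq a_{j-1}$.} We exchange rows: replace $\Delta'_{i+1}$ in the ladder by the truncated row $\Delta_{j-1}^\dagger=[a_{j-1},d-j+1]$. Its start satisfies $a_j<a_{j-1}\leq a'$, which keeps starts increasing from position $i$ to $i+1$ and, since $a_{j-1}\leq a'<a'_{i+2}$, also from $i+1$ to $i+2$. Its end equals $e(\Delta'_{i+1})$. It contains $c+i+1$, because $a_{j-1}\leq a'\leq c+i+1\leq d-j+1$. It does not already occur in the ladder, since the only ladder position with end $d-j+1$ is $i+1$. After the exchange, positions $i$ and $i+1$ are both truncated rows, so their lifts rise together.

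The exchange may create a new tie between positions $i+1$ and $i+2$, now governed by index $j-1$. I would therefore proceed by induction on the ladder position from the bottom up, or equivalently by descending induction on $j$. Each exchange strictly lowers the index $j$ of the offending truncated row, and the process ends at $j=1$, where a tie is impossible. This yields a modified ladder in $\mathfrak a^\dagger$ of the same length whose naive lift is a $c$-diagonal ladder in $\mathfrak a$.

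The delicate point is this exchange step. I must check that the replacement does not violate the left-endpoint inequality with position $i+2$ (shown above) and that the termination argument really decreases $j$. I would write it out carefully as a single induction statement: the first $k$ positions have been lifted with no tie.
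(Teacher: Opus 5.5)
Your proposal is correct and follows the paper's proof. Both lift each row to its ancestor and identify the only possible failure: a truncated selected row followed by an unselected row ending one step higher. The first selection is then excluded by first-step maximality, step-$j$ maximality forces that unselected row to start at or after $a_{j-1}$, and it is swapped for $\Delta_{j-1}^\dagger$ and the repair propagated upward by a left-to-right scan. You are slightly more explicit than the paper in checking $\Delta'_{i+1}\prec\Delta_{j-1}$ and the left-endpoint inequality with position $i+2$.
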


\begin{proof}
Write the segments selected in this pass, in their order of selection, as
\[
  S_j=[u_j,D-j]_\rho,
  \qquad 0\leq j\leq k-1.
\]
The selection rule gives
\[
  u_0>u_1>\cdots>u_{k-1}.
\]
After the pass, $S_j$ is replaced by
\[
  S_j^-=[u_j,D-j-1]_\rho,
\]
with an empty segment deleted.  Let
$\Delta_0,\ldots,\Delta_{r-1}$ be a $c$-diagonal ladder in
$\mathfrak a^\dagger$.  Lift each segment to its ancestor in
$\mathfrak a$.  Left endpoints and the conditions
$c+i\in\Delta_i$ are unchanged.  A failure of strict increase among the right
endpoints can occur only in the following configuration:
\[
  \Delta_i=S_j^-,\qquad
  \Delta_{i+1}=U,
\]
where $U$ was not selected and
\[
  e(U)=e(S_j^-)+1=D-j.
\]
where $e(U)$ is the right endpoint of $U$. Since the ladder has strictly increasing left
endpoints,
\[
  u_j<b(U),
\]
where $b(U)$ is the left endpoint of $U$.

If $j=0$, then before truncation both $S_0$ and $U$ ended at $D$, contradicting
the choice of $S_0$ with maximal left endpoint among the segments ending at
$D$.  Thus $j\geq1$.  We claim that
\[
  u_{j-1}\leq b(U).
\]
Indeed, if $b(U)<u_{j-1}$, then before truncation $U$ was eligible at the step
at which $S_j$ was chosen after $S_{j-1}$.  The maximality condition in the MW
algorithm would give $u_j\geq b(U)$, contrary to $u_j<b(U)$.

Replace $U$ in the ladder by
\[
  S_{j-1}^-=[u_{j-1},D-j]_\rho.
\]
This replacement has the same right endpoint as $U$, its left endpoint lies
strictly to the right of $u_j$ and weakly to the left of $b(U)$, and it still
contains $c+i+1$.  Hence it preserves the $c$-diagonal ladder in
$\mathfrak a^\dagger$.  Moreover, after lifting, the two consecutive terms
$S_j^-,S_{j-1}^-$ acquire the strictly increasing right endpoints
$D-j,D-j+1$.

The replacement can create the same issue only with the next term of the
ladder.  Repeating the repair moves successively from $S_j^-$ to
$S_{j-1}^-,S_{j-2}^-,\ldots$.  The process terminates, and it can never pass
$S_0^-$ by the first-step maximality argument above.  Scanning the ladder from
left to right therefore removes every possible failure.  The resulting
ancestors in $\mathfrak a$ form a $c$-diagonal ladder of the same length.
\end{proof}

\begin{lemma}
\label{lem:dual-segment-diagonal-ladder}
Let $\mathfrak m$ be a multisegment.  If
\[
  [c,d]_\rho
\]
is a segment in $\mathfrak m^{\mathrm t}$ and $r=d-c+1$, then the $\rho$-part of
$\mathfrak m$ contains a $c$-diagonal ladder
\[
  \Delta_0,\Delta_1,\ldots,\Delta_{r-1}
\]
of length $r$.
\end{lemma}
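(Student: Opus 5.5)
The plan is to induct on the number of MW passes, using Lemma~\ref{lem:MW-lifting} to transport ladders from later stages back to $\mathfrak m$. Restrict to the $\rho$-line, which is allowed since the algorithm runs line by line. Let $\mathfrak a^{(0)}=\mathfrak m_\rho$, and let $\mathfrak a^{(p)}$ be the multisegment after $p$ passes. The segment $[c,d]_\rho$ of $\mathfrak m^{\mathrm t}$ is recorded in some pass, say pass $p+1$, applied to $\mathfrak a^{(p)}$.

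First I produce a $c$-diagonal ladder of length $r$ inside $\mathfrak a^{(p)}$ directly from that pass. The pass selects segments $\Delta_j=[a_j,d-j]_\rho$ for $0\le j\le r-1$, where $d$ is the largest right endpoint and $c=d-r+1$. Each selection satisfies $\Delta_{j+1}\prec\Delta_j$, which gives $a_{j+1}<a_j\le d-j$. Reindex by $i=r-1-j$ and set $\Gamma_i=\Delta_{r-1-i}=[a_{r-1-i},\,c+i]_\rho$. Then the left endpoints $a_{r-1-i}$ increase strictly in $i$ and the right endpoints $c+i$ increase strictly. Finally $c+i\in\Gamma_i$ because $c+i$ is the right endpoint and $a_{r-1-i}\le c+i$ (each selected segment is nonempty). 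So $\Gamma_0,\dots,\Gamma_{r-1}$ is a $c$-diagonal ladder in $\mathfrak a^{(p)}$.

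Next, since $\mathfrak a^{(p)}$ is obtained from $\mathfrak a^{(p-1)}$ by one MW pass, Lemma~\ref{lem:MW-lifting} lifts this ladder to a $c$-diagonal ladder of length $r$ in $\mathfrak a^{(p-1)}$. Applying the lemma $p$ times in total yields such a ladder in $\mathfrak a^{(0)}=\mathfrak m_\rho$, as required.

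The only point needing care is bookkeeping. Equal segments are kept as separate rows, so "ancestor" and "ladder" should be read as referring to rows, i.e.\ a sequence of distinct members of the multiset. Strict monotonicity of the endpoints already forces the ladder's terms to be distinct, so no multiplicity issue arises. The real difficulty has been absorbed into Lemma~\ref{lem:MW-lifting}, which is assumed here. Beyond that, the main thing to check is that the linking condition $\prec$ in step (ii) of Definition~\ref{def:MW-algorithm} really gives strict decrease of left endpoints; it does, since $\Delta_{j+1}\prec\Delta_j$ means $b(\Delta_{j+1})<b(\Delta_j)$.
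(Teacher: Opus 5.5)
Your proposal is correct and is the paper's proof: you read off a $c$-diagonal ladder from the segments selected, in reverse order, in the pass that records $[c,d]_\rho$, and then apply Lemma~\ref{lem:MW-lifting} backwards through the preceding passes to reach $\mathfrak m$. Your extra checks (strict decrease of left endpoints coming from $\prec$, and distinctness of the rows) are harmless refinements of the same argument.
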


\begin{proof}
Run the MW algorithm on the $\rho$-part of $\mathfrak m$.  Consider the pass
that records the segment $[c,d]_\rho$.  Immediately before that pass, the
selected segments, listed in their order of selection, have the form
\[
  S_0=[u_0,d]_\rho,
  S_1=[u_1,d-1]_\rho,
  \ldots,
  S_{r-1}=[u_{r-1},c]_\rho,
\]
with
\[
  u_0>u_1>\cdots>u_{r-1}.
\]
Consequently,
\[
  \Delta_i=S_{r-1-i},
  \qquad 0\leq i\leq r-1,
\]
is a $c$-diagonal ladder in the current multisegment: its right endpoints are
$c,c+1,\ldots,d$, its left endpoints are strictly increasing, and
$c+i\in\Delta_i$.

The current multisegment was obtained from the original $\mathfrak m$ by the
preceding MW passes.  Apply Lemma~\ref{lem:MW-lifting} successively in reverse
order through those passes.  This produces a $c$-diagonal ladder of length
$r$ in $\mathfrak m$ itself.
\end{proof}

For later use, note that condition \eqref{eq:pair-pole-t} is equivalently
\begin{equation}
\label{eq:pair-pole-t-endpoints}
  a+t\leq a'\leq b+t\leq b'.
\end{equation}
Indeed, its three inequalities are respectively the upper bound coming from
$\len-1$, the nonnegativity condition, and the upper bound coming from
$\len'-1$.

\begin{theorem}
\label{thm:general-vanishing-to-poles}
Let $\pi=L(\mathfrak m)$ be an irreducible representation of $\GL_M(F)$ and
put
\[
  r=r_{\MW}(\mathfrak m).
\]
Then, for every positive integer $t$,
\begin{equation}
\label{eq:general-pole-lower-bound}
  \pord_{s=t}L(s,\pi\times\pi^\vee)\geq \max\{r-t,0\}.
\end{equation}
In particular, if $M=2n$, then
\[
  \pi_{N,\psi}=0
  \quad\Longrightarrow\quad
  \pord_{s=t}L(s,\pi\times\pi^\vee)\geq n-t+1
  \qquad(1\leq t\leq n).
\]
Thus the vanishing-to-poles implication in Conjecture
\ref{conj:prasad} holds for every irreducible representation of
$\GL_{2n}(F)$.
\end{theorem}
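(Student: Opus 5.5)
We use Lemma~\ref{lem:dual-segment-diagonal-ladder} to produce a ladder in $\mathfrak m$ itself, and then count the poles at $s=t$ contributed by ordered pairs of ladder segments, using the endpoint criterion \eqref{eq:pair-pole-t-endpoints}.

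\emph{Step 1: extracting a ladder.} If $r-t\le0$ there is nothing to prove, so assume $1\le t<r$. Choose a segment $[c,d]_\rho$ in $\mathfrak m^{\mathrm t}$ of length $r$. Lemma~\ref{lem:dual-segment-diagonal-ladder} gives a $c$-diagonal ladder $\Delta_0,\dots,\Delta_{r-1}$ in the $\rho$-part of $\mathfrak m$, with $\Delta_i=[a_i,b_i]_\rho$. These are $r$ distinct members of the multiset $\mathfrak m$: the right endpoints strictly increase, so they are distinct as segments.

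\emph{Step 2: counting poles.} By \eqref{eq:self-rs-multisegment}, $L(s,\pi\times\pi^\vee)$ is a product over ordered pairs $(i,j)$ of segments of $\mathfrak m$. Every factor is a local $L$-factor of the form $L(s+x,\rho_i\times\rho_j^\vee)$, which has no zeros. Hence the pole order at $s=t$ is at least the number of ordered pairs that contribute a pole there. We claim that for each $0\le i\le r-1-t$ the pair $(\Delta_i,\Delta_{i+t})$ contributes a pole at $s=t$. By \eqref{eq:pair-pole-t-endpoints} this requires
\[
  a_i+t\le a_{i+t}\le b_i+t\le b_{i+t}.
\]

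\begin{itemize}
\item The first inequality holds because the left endpoints are strictly increasing integers, so $a_{i+t}\ge a_i+t$.
\item The third inequality holds for the same reason applied to the right endpoints: $b_{i+t}\ge b_i+t$.
\item The middle inequality uses the diagonal condition. From $c+i\in\Delta_i$ we get $b_i\ge c+i$, and from $c+i+t\in\Delta_{i+t}$ we get $a_{i+t}\le c+i+t$. Together these give $a_{i+t}\le c+i+t\le b_i+t$.
\end{itemize}

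This yields $r-t$ distinct ordered pairs, each contributing one pole at $s=t$, so $\pord_{s=t}L(s,\pi\times\pi^\vee)\ge r-t$. This proves \eqref{eq:general-pole-lower-bound}.

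\emph{Step 3: the case $M=2n$.} If $\pi_{N,\psi}=0$, then Theorem~\ref{thm:corrected} gives $r\ge n+1$. Step 2 then gives, for $1\le t\le n$,
\[
  \pord_{s=t}L(s,\pi\times\pi^\vee)\ge r-t\ge n-t+1,
\]
which is the vanishing-to-poles implication in Conjecture~\ref{conj:prasad}.

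The main work is already contained in Lemma~\ref{lem:MW-lifting}, which guarantees a ladder in $\mathfrak m$ itself rather than only in an intermediate multisegment of the MW algorithm. The one point that needs care here is the accounting: each ordered pair of (multiset-distinct) segments contributes its own factor to \eqref{eq:self-rs-multisegment}, and no factor can cancel a pole because local $L$-factors have no zeros.
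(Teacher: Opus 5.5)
Your proposal is correct and follows the paper's own proof essentially step for step. Like the paper, you extract a $c$-diagonal ladder from Lemma~\ref{lem:dual-segment-diagonal-ladder}, verify $a_i+t\le a_{i+t}\le b_i+t\le b_{i+t}$ for the $r-t$ distinct ordered pairs $(\Delta_i,\Delta_{i+t})$ via \eqref{eq:pair-pole-t-endpoints}, add up the poles using that local $L$-factors have no zeros, and obtain the $M=2n$ case from Theorem~\ref{thm:corrected}.
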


\begin{proof}
Choose a segment $[c,d]_\rho$ of length $r$ in
$\mathfrak m^{\mathrm t}$.  By Lemma
\ref{lem:dual-segment-diagonal-ladder}, the original multisegment contains a
$c$-diagonal ladder
\[
  \Delta_i=[a_i,b_i]_\rho,
  \qquad 0\leq i\leq r-1.
\]
If \(t\geq r\), then the right hand side of
\eqref{eq:general-pole-lower-bound} is zero, so there is nothing to prove.
Assume \(1\leq t<r\).  Fix \(i\) with \(0\leq i\leq r-t-1\).  Since the
endpoints in the ladder are strictly increasing integers,
\[
  a_i+t\leq a_{i+t},
  \qquad
  b_i+t\leq b_{i+t}.
\]
The diagonal conditions \(c+i\in\Delta_i\) and
\(c+i+t\in\Delta_{i+t}\) give
\[
  a_{i+t}\leq c+i+t
  \qquad\text{and}\qquad
  c+i+t\leq b_i+t.
\]
Combining these inequalities, we obtain
\[
  a_i+t\leq a_{i+t}\leq b_i+t\leq b_{i+t}.
\]
By \eqref{eq:pair-pole-t-endpoints}, the pair
$(\Delta_i,\Delta_{i+t})$, taken in this order,  contributes one pole at $s=t$ to
the adjoint L factor $L(s,\pi\times\pi^\vee)$ as written in  \eqref{eq:self-rs-multisegment}.  As \(i\) varies, these \(r-t\) ordered pairs are
distinct pairs.  Proposition
\ref{prop:positive-pole-pairs} and multiplicativity give one pole for each of
these factors, and local \(L\)-factors have no zeros that could cancel a pole.
Hence
\[
  \pord_{s=t}L(s,\pi\times\pi^\vee)\geq r-t.
\]
This proves \eqref{eq:general-pole-lower-bound}.

Now suppose $M=2n$ and $\pi_{N,\psi}=0$.  Theorem
\ref{thm:corrected} gives $r\geq n+1$.  Therefore, for $1\leq t\leq n$,
\[
  \pord_{s=t}L(s,\pi\times\pi^\vee)
  \geq r-t\geq n-t+1,
\]
as required.
\end{proof}

\begin{remark}
The condition
\[
  \pord_{s=t}L(s,\pi\times\pi^\vee)\geq \max\{n-t+1,0\}
  \qquad(t\geq1)
\]
does not repair the converse implication.  
\end{remark}



\section{The principal series containing the trivial representation}
\label{sec:trivial-principal-series}

Let $m\geq1$ and let $\rho$ be a smooth character of $F^\times$.  Consider the
normalized principal series
\begin{equation}
\label{eq:sigma-m-rho}
  \Sigma_m(\rho)
  :=\nu^{m-1}\rho\times\nu^{m-2}\rho\times\cdots\times\rho
\end{equation}
of $\GL_m(F)$.  For $\rho=\nu^{-(m-1)/2}$, this is the principal
series containing the trivial representation.

For an ordered partition \(\alpha=(\alpha_1,\dots,\alpha_k)\) of \(m\), put
\[
  s_0=0,\qquad s_j=\alpha_1+\cdots+\alpha_j\quad(1\leq j\leq k).
\]
Its break set is
\[
  B(\alpha)=\{s_1,\dots,s_{k-1}\}\subseteq\{1,\dots,m-1\}.
\]
Thus \(0=s_0<s_1<\cdots<s_k=m\) is obtained from \(B(\alpha)\) by adjoining
\(0\) and \(m\).  Attach the multisegment
\begin{equation}
\label{eq:p-alpha}
  \mathfrak p_\alpha
  :=\sum_{j=1}^k[s_{j-1},s_j-1]_\rho.
\end{equation}
Let  
\(L(\mathfrak p_\alpha)\) be the Langlands quotient formed from the multisegment   $\mathfrak p_\alpha$.
By Zelevinsky's classification \cite{Zel80}, the distinct irreducible
subquotients of $\Sigma_m(\rho)$ are exactly
\begin{equation}
\label{eq:pi-alpha}
  \pi_\alpha:=L(\mathfrak p_\alpha),
\end{equation}
as $\alpha$ ranges over the $2^{m-1}$ ordered partitions of $m$.

\subsection{The Langlands parameter}

\begin{proposition}
\label{prop:parameter-principal-series}
The Langlands parameter of $\pi_\alpha$ is
\[
  \phi_{\pi_\alpha}
  :=\bigoplus_{j=1}^k
  \rho\nu^{(s_{j-1}+s_j-1)/2}\otimes S_{\alpha_j}.
\]
For $\rho=\nu^{-(m-1)/2}$ this becomes
\[
  \phi_{\pi_\alpha}
  =\bigoplus_{j=1}^k
  \nu^{(s_{j-1}+s_j-m)/2}\otimes S_{\alpha_j}.
\]
\end{proposition}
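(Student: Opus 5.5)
The proof is a direct unwinding of the Langlands classification together with the parameter of an essentially square-integrable representation already used in the proof of Proposition~\ref{prop:RS}. First I will recall the general fact. For \(\pi=L(\mathfrak m)\) with \(\mathfrak m=\Delta_1+\cdots+\Delta_t\), the local Langlands parameter is the direct sum of the parameters of the \(\delta(\Delta_i)\). This holds because the Langlands correspondence for \(\GL_m\) is compatible with the Langlands quotient construction: the parameter of \(L(\mathfrak m)\) is the parameter of the standard module \(\delta(\Delta_1)\times\cdots\times\delta(\Delta_t)\), which is additive. Next, as in the proof of Proposition~\ref{prop:RS}, the parameter of \(\delta([a,b]_\rho)\) is \(\nu^{(a+b)/2}\phi_\rho\otimes S_{b-a+1}\). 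Here \(\rho\) is a character of \(F^\times\), so \(\phi_\rho=\rho\) via local class field theory.

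Apply this to \(\mathfrak p_\alpha=\sum_{j}[s_{j-1},s_j-1]_\rho\). The \(j\)-th segment has \(a=s_{j-1}\) and \(b=s_j-1\), so its length is \(s_j-s_{j-1}=\alpha_j\) and its central exponent is \((s_{j-1}+s_j-1)/2\). Summing over \(j\) gives the first displayed formula. Checking that the ordering of segments in \eqref{eq:p-alpha} is admissible for \eqref{eqn: seg-order} is not needed: the Langlands quotient is independent of the admissible order, and the parameter is a direct sum and hence order-free.

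For the specialization, substitute \(\rho=\nu^{-(m-1)/2}\) and combine exponents: \(-(m-1)/2+(s_{j-1}+s_j-1)/2=(s_{j-1}+s_j-m)/2\). This yields the second formula.

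The only point needing care is the normalization of the correspondence, namely whether the Steinberg-type segment \([a,b]\) corresponds to \(S_\ell\) centered at \((a+b)/2\) rather than at a shift. I will fix this by taking the convention already adopted in Proposition~\ref{prop:RS}, so the statement is consistent with the rest of the paper. Everything else is bookkeeping.
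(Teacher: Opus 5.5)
Your proposal is correct and is essentially the paper's proof. Each segment $[s_{j-1},s_j-1]_\rho$ of $\mathfrak p_\alpha$ contributes $\rho\nu^{(s_{j-1}+s_j-1)/2}\otimes S_{\alpha_j}$ via the parameter of $\delta([a,b]_\rho)$ from Proposition~\ref{prop:RS}; you then sum over $j$ and substitute $\rho=\nu^{-(m-1)/2}$. Your remark that the ordering is irrelevant is well placed: the displayed order of $\mathfrak p_\alpha$ actually has $\Delta_j\prec\Delta_{j+1}$, so the admissible order is the reverse one, but the parameter is a direct sum and does not see this.
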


\begin{proof}
The segment $[s_{j-1},s_j-1]_\rho$ contributes
\[
  \rho\nu^{(s_{j-1}+s_j-1)/2}\otimes S_{s_j-s_{j-1}}
  =\rho\nu^{(s_{j-1}+s_j-1)/2}\otimes S_{\alpha_j}.
\]
Taking the direct sum over the segments gives the first formula;
the second follows by substituting $\rho=\nu^{-(m-1)/2}$.
\end{proof}

\subsection{Parabolic indexing and complementary ordered partitions}

For an ordered partition \(\gamma\) of \(m\), let \(B(\gamma)\) be its break
set, as above.  Define the complementary ordered partition $\gamma^\complement$
by
\begin{equation}
\label{eq:complementary-ordered-partition}
  B(\gamma^\complement)
  =\{1,\dots,m-1\}\smallsetminus B(\gamma).
\end{equation}
The operation $\gamma\mapsto\gamma^\complement$ is an involution.  It
exchanges $(m)$ and $(1,\dots,1)$.
We write \(\gamma^\downarrow\) for the partition obtained by rearranging the
parts of \(\gamma\) in decreasing order.

Let $P_\beta$ be the standard parabolic subgroup with Levi size of the ordered partition
$\beta$, and write
\[
  v_{P_\beta}^G
  =C^\infty(G/P_\beta)
   \Big/
   \sum_{Q\supsetneq P_\beta}C^\infty(G/Q)
\]
for the corresponding generalized Steinberg representation.

\begin{proposition}
\label{prop:parabolic-indexing}
For $\rho=\nu^{-(m-1)/2}$ and every ordered partition $\beta$ of $m$,
\[
  v_{P_\beta}^G\simeq\pi_{\beta^\complement}.
\]
\end{proposition}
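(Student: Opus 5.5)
We identify $v_{P_\beta}^G$ inside $\Sigma_m(\rho)$ with $\rho=\nu^{-(m-1)/2}$ by comparing Jacquet modules. Both sides are irreducible subquotients of $\Sigma_m(\rho)$, and they are distinguished by their exponents along the Borel subgroup. The plan is to use the classical description of the generalized Steinberg representations, due to Casselman and Borel--Wallach, in terms of Jacquet modules. Namely, $C^\infty(G/P_\beta)$ is the normalized induction $\nu^{-(m-1)/2}\cdot(\one_{\beta_1}\times\cdots\times\one_{\beta_k})$ with suitable twists. Its Borel Jacquet module (semisimplified) consists of the characters $w(\chi_0)$ for $w$ running over minimal length coset representatives of $W/W_{\beta}$; here $\chi_0$ is the exponent of the trivial representation. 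Inclusion–exclusion over $Q\supseteq P_\beta$ then shows that $r_B(v_{P_\beta}^G)$ consists of the single exponent $w(\chi_0)$ for those $w$ whose descent set (as a subset of simple reflections $\{1,\dots,m-1\}$) is exactly $B(\beta)$. The underlying fact is that $\Sigma_m(\rho)$ is multiplicity free, with $2^{m-1}$ constituents indexed by subsets of simple reflections, and each constituent is determined by which descent classes of $W$ occur in its Jacquet module.

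The second step computes the same invariant for $\pi_\gamma=L(\mathfrak p_\gamma)$, where $\gamma=\beta^\complement$. For this we use Zelevinsky's description via $Z$. The Zelevinsky dual of $\mathfrak p_\gamma$ for a multisegment of singletons-and-segments on a single line $[0,m-1]$ is again of the form $\mathfrak p_{\gamma'}$, and one checks with Definition~\ref{def:MW-algorithm} that $\gamma'=\gamma^\complement$. Indeed, consecutive segments $[s_{j-1},s_j-1],[s_j,s_{j+1}-1]$ are linked, and the MW passes cut at exactly the complementary breaks. We may test this on $\gamma=(m)$, where the Steinberg representation is dual to the trivial one, and on $\gamma=(1,\dots,1)$. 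Thus $\pi_\gamma=Z(\mathfrak p_{\gamma^\complement})=Z(\mathfrak p_\beta)$. By Zelevinsky's theory, $Z(\mathfrak p_\beta)$ is the unique constituent of $Z([s_0,s_1-1])\times\cdots\times Z([s_{k-1},s_k-1])$ that is characterized by the ordering property of its Jacquet module. Here each $Z([s_{j-1},s_j-1])$ is a character of $\GL_{\beta_j}$ (a twist of $\one_{\beta_j}$). This product is precisely $C^\infty(G/P_\beta)$ up to the normalization fixed by $\rho$. Its unique irreducible quotient is therefore $Z(\mathfrak p_\beta)$ or $L(\mathfrak p_\beta)$, depending on the ordering convention, and it is the constituent not contained in any $C^\infty(G/Q)$ with $Q\supsetneq P_\beta$.

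Concretely, I would carry out the argument as follows. First, realize $C^\infty(G/P_\beta)=\one_{\beta_1}\nu^{x_1}\times\cdots\times\one_{\beta_k}\nu^{x_k}$ and match its exponents with the segments $[s_{j-1},s_j-1]_\rho$ read in decreasing order. Second, show that $v_{P_\beta}^G$ is irreducible and equals the unique constituent of this induced representation not appearing in any larger $C^\infty(G/Q)$. This is Casselman's result, and it can also be seen from multiplicity freeness plus a count: $2^{m-1}$ constituents versus $2^{m-1}$ parabolics, with Möbius inversion. Third, identify that constituent as $Z$ of the multisegment with the decreasing order of segments reversed, namely $Z(\mathfrak p_\beta)$. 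Fourth, apply the MW duality $\mathfrak p_\beta^{\mathrm t}=\mathfrak p_{\beta^\complement}$ to conclude $v_{P_\beta}^G\simeq\pi_{\beta^\complement}$. As sanity checks, $\beta=(m)$ gives the trivial representation $=L(\mathfrak p_{(1,\dots,1)})$, and $\beta=(1,\dots,1)$ gives the Steinberg representation $=L(\mathfrak p_{(m)})=\delta([0,m-1])$.

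The main obstacle is the third step. We must pin down the top constituent $v_{P_\beta}^G$ as exactly $Z(\mathfrak p_\beta)$ rather than some other constituent sharing the same cuspidal support. I would handle it via Jacquet modules: $Z(\mathfrak m)$ has as its leading exponent the one obtained by reading each segment of $\mathfrak m$ increasingly, and this exponent has descent set $B(\beta)$. Matching it with the unique exponent of $v_{P_\beta}^G$ from the first step finishes the argument. Sign and ordering conventions, in particular the contragredient versus the normalization, will need to be checked on $\GL_2$ and $\GL_3$.
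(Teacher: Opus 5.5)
Your outline is correct and follows the paper's argument: realize $C^\infty(G/P_\beta)$ as $Z([s_0,s_1-1]_\rho)\times\cdots\times Z([s_{k-1},s_k-1]_\rho)$, identify $v_{P_\beta}^G$ with $Z(\mathfrak p_\beta)$, and finish with the M\oe glin--Waldspurger computation $\mathfrak p_\beta^{\mathrm t}=\mathfrak p_{\beta^\complement}$; the paper simply cites Zelevinsky for the middle step, where you justify it via irreducibility of $v_{P_\beta}^G$ together with the unique-quotient property, or via Borel exponents. Drop the hedge ``$Z(\mathfrak p_\beta)$ or $L(\mathfrak p_\beta)$'' and fix two ordering slips: the segments appear in increasing order in $C^\infty(G/P_\beta)$ (not decreasing, as you wrote), so its unique irreducible quotient is $Z(\mathfrak p_\beta)$, while $L(\mathfrak p_\beta)=\pi_\beta$ is not even a constituent unless $\beta=(1,\dots,1)$; and in the exponent check you must read the segments in decreasing order, as dictated by $Z(\mathfrak p_\beta)\hookrightarrow Z([s_{k-1},s_k-1]_\rho)\times\cdots\times Z([s_0,s_1-1]_\rho)$.
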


\begin{proof}
In terms of subsets of the simple roots \(\{1,\dots,m-1\}\), the parabolic
\(P_\beta\) has Levi size  given by the ordered partition $\beta$.  The simple roots in its Levi are
\[
  I_\beta=\{1,\dots,m-1\}\smallsetminus B(\beta).
\]
Let
\[
  0=s_0<s_1<\cdots<s_r=m
\]
be obtained by adjoining \(0\) and \(m\) to \(B(\beta)\).  
For any parabolic $P$, let $\delta_P$ be the associated modulus character.
For
\(\rho=\nu^{-(m-1)/2}\), the character
\(\delta_{P_\beta}^{-1/2}\) on the block
\(\GL_{s_\ell-s_{\ell-1}}(F)\) is
\[
  g\longmapsto
  \rho(\det g)|\det g|_F^{(s_{\ell-1}+s_\ell-1)/2},
  \qquad g\in\GL_{s_\ell-s_{\ell-1}}(F).
\]
This is the one-dimensional representation
\(Z([s_{\ell-1},s_\ell-1]_\rho)\).
Thus \(C^\infty(G/P_\beta)\) can be written as the
normalized induction
\[
  C^\infty(G/P_\beta)\simeq
  \Ind_{P_\beta}^G(\delta_{P_\beta}^{-1/2})
  \simeq
  Z([s_0,s_1-1]_\rho)\times\cdots\times
  Z([s_{r-1},s_r-1]_\rho).
\]
Moreover, under the pullback inclusions \(C^\infty(G/Q)\subset
C^\infty(G/P_\beta)\) for \(Q\supset P_\beta\), the quotient defining
\(v_{P_\beta}^G\) is
\[
  v_{P_\beta}^G\simeq
  \Ind_{P_\beta}^G(\delta_{P_\beta}^{-1/2})
  \Big/
  \sum_{Q\supsetneq P_\beta}
  \Ind_Q^G(\delta_Q^{-1/2}).
\]
Zelevinsky identifies this generalized Steinberg quotient with the
irreducible representation \(Z(\mathfrak m)\) attached to the multisegment
determined by the blocks of \(P_\beta\):
\[
  v_{P_\beta}^G\simeq
  Z\left(\sum_{\ell=1}^r [s_{\ell-1},s_\ell-1]_\rho\right);
\]
see \cite[\S2.2, Proposition~2.10]{Zel80}.
By Zelevinsky duality, \(Z(\mathfrak m)=L(\mathfrak m^{\mathrm t})\), and
 Proposition~\ref{prop:diagonal-complement}
below gives the dual multisegment $\mathfrak m^t$ in this case.  Thus, if
\[
  0=t_0<t_1<\cdots<t_k=m
\]
is obtained by adjoining \(0\) and \(m\) to \(I_\beta\), then
\[
  \left(\sum_{\ell=1}^r [s_{\ell-1},s_\ell-1]_\rho\right)^{\mathrm t}
  =
  \sum_{j=1}^k [t_{j-1},t_j-1]_\rho,
\]
and therefore
\[
  v_{P_\beta}^G\simeq
  L\left(\sum_{j=1}^k [t_{j-1},t_j-1]_\rho\right).
\]
If \(\alpha=(t_1-t_0,\dots,t_k-t_{k-1})\), then the corresponding multisegment is
\(\mathfrak p_\alpha\) from \eqref{eq:p-alpha}, and
\[
  B(\alpha)=I_\beta
  =\{1,\dots,m-1\}\smallsetminus B(\beta),
\]
so \(\alpha=\beta^\complement\).  The extreme cases give
$v_G^G=\one=\pi_{(1,\dots,1)}$ and
$v_B^G=\operatorname{St}_m=\pi_{(m)}$.
\end{proof}

It follows directly that 
\begin{corollary}
The parts occurring as dimensions of the Deligne
$\mathrm{SL}_2$-summands in the Langlands parameter of $v_{P_\beta}^G$ are the parts of
the complementary ordered partition $\beta^\complement$.
\end{corollary}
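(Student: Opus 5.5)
The corollary follows by combining Proposition~\ref{prop:parabolic-indexing} with Proposition~\ref{prop:parameter-principal-series}. First, Proposition~\ref{prop:parabolic-indexing} (with \(\rho=\nu^{-(m-1)/2}\)) identifies \(v_{P_\beta}^G\) with \(\pi_{\beta^\complement}=L(\mathfrak p_{\beta^\complement})\). Next, write \(\alpha=\beta^\complement=(\alpha_1,\dots,\alpha_k)\) with partial sums \(s_j\). Proposition~\ref{prop:parameter-principal-series} then gives the Langlands parameter
\[
  \phi_{v_{P_\beta}^G}=\bigoplus_{j=1}^k\nu^{(s_{j-1}+s_j-m)/2}\otimes S_{\alpha_j}.
\]

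Each summand is a one-dimensional character of the Weil group tensored with the irreducible representation \(S_{\alpha_j}\) of \(\mathrm{SL}_2(\mathbb C)\). So the multiset of dimensions of the irreducible \(\mathrm{SL}_2\)-constituents, counted with multiplicity, is exactly \(\{\alpha_1,\dots,\alpha_k\}\). These are the parts of \(\beta^\complement\).

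The only step that needs any care is confirming that this decomposition is the decomposition into Deligne \(\mathrm{SL}_2\)-summands. The twist characters \(\nu^{(s_{j-1}+s_j-m)/2}\) are one-dimensional, so they do not change the \(\mathrm{SL}_2\)-isotypic structure. Even if two summands share the same twist, the \(\mathrm{SL}_2\)-dimensions still form the same multiset. Hence no further obstacle arises, and the statement is an immediate bookkeeping consequence of the two propositions.
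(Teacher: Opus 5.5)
Your proposal is correct and is the paper's argument: the paper states the corollary as following directly from Proposition~\ref{prop:parabolic-indexing}, which gives \(v_{P_\beta}^G\simeq\pi_{\beta^\complement}\), combined with the explicit Langlands parameter in Proposition~\ref{prop:parameter-principal-series}. Your extra remark about the \(\mathrm{SL}_2\)-dimensions being well defined is harmless; the twists \(\nu^{(s_{j-1}+s_j-m)/2}\) are in fact pairwise distinct.
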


\subsection{The Zelevinsky dual and the wave-front set}

For an integer $t$, given an ordered partition
$\alpha=(\alpha_1,\dots,\alpha_k)$ of $m$, let
\[
  0=s_0<s_1<\cdots<s_k=m
\]
be obtained from \(B(\alpha)\) by adjoining \(0\) and \(m\).
Define
\begin{equation}
\label{eq:h-alpha}
  h_\alpha(t)
  =\#\left\{
    1\leq j\leq k:
    s_{j-1}-j\leq t\leq s_j-1-j
  \right\}.
\end{equation}
If one draws the segment $[s_{j-1},s_j-1]_\rho$ in row $j$, with columns
\(s_{j-1}\leq c\leq s_j-1\), then $h_\alpha(t)$ is the cardinality of the
intersection of this diagram with the diagonal $c-j=t$ (see the picture after Proposition
\ref{prop:trivial-principal-series}).

\begin{proposition}
\label{prop:trivial-principal-series}
The Zelevinsky dual of $\mathfrak p_\alpha$ is
\[
  \mathfrak p_\alpha^{\mathrm t}
  =\sum_{t:\,h_\alpha(t)>0}
  [t+j_-(t),t+j_+(t)]_\rho,
\]
where
\[
  j_-(t)=\min\{j:s_{j-1}-j\leq t\leq s_j-1-j\},
\]
\[
  j_+(t)=\max\{j:s_{j-1}-j\leq t\leq s_j-1-j\}.
\]
In particular,
\[
  r_{\MW}(\mathfrak p_\alpha)=\max_t h_\alpha(t).
\]
\end{proposition}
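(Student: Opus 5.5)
The plan is to run the M\oe glin--Waldspurger algorithm of Definition~\ref{def:MW-algorithm} directly on $\mathfrak p_\alpha$ and show that each pass removes exactly one full diagonal $c-j=t$ of the diagram described before the statement. We take the diagonals in decreasing order of $t$.

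\emph{Diagonal structure.} Put $I_j=[s_{j-1}-j,\,s_j-1-j]$. This is the set of diagonals $t$ met by row $j$, and it contains $\alpha_j$ integers. The right end of $I_j$ is $s_j-1-j=s_j-(j+1)$, which is the left end of $I_{j+1}$. Hence consecutive intervals share an endpoint. It follows that for every $t$ the index set $J(t)=\{j:t\in I_j\}$ is an interval of consecutive integers $[j_-(t),j_+(t)]$ with $\# J(t)=h_\alpha(t)$. The cells on diagonal $t$ are $(j,t+j)$ for $j\in J(t)$, so the segment $[t+j_-(t),t+j_+(t)]_\rho$ has length exactly $h_\alpha(t)$.

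\emph{Invariant.} For an integer $T$, let $\mathfrak d_T$ be the multisegment whose $j$-th row is
\[
[s_{j-1},\,e_j]_\rho,\qquad e_j=\min(s_j-1,\,T+j),
\]
with empty rows deleted. In words, $\mathfrak d_T$ keeps the cells of $\mathfrak p_\alpha$ on diagonals $\le T$. We have $\mathfrak p_\alpha=\mathfrak d_T$ for $T$ large. The claim is that one MW pass applied to $\mathfrak d_T$ records $[T'+j_-(T'),T'+j_+(T')]_\rho$ and produces $\mathfrak d_{T'-1}$, where $T'$ is the largest diagonal still present. The proof uses four observations.
\begin{enumerate}[label=\textup{(\arabic*)},leftmargin=2.2em]
\item Both $s_j-1$ and $T+j$ are strictly increasing in $j$, so the right endpoints $e_j$ of the nonempty rows are strictly increasing. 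Each value is therefore the endpoint of at most one row, and the maximality conditions in steps (i) and (ii) are automatic.
\item The diagonal of the last cell of row $j$ is $e_j-j=\min(s_j-1-j,T)$, which is nondecreasing in $j$. So the last nonempty row (row $k$, chosen first) ends on the top diagonal $T'$. Moreover, the rows whose last cell lies on diagonal $T'$ are exactly the rows $j\in J(T')$; these are consecutive and end at consecutive integers.
\item For consecutive such rows $j-1,j$ we have $s_{j-2}<s_{j-1}\le e_{j-1}+1=e_j$. This gives $\Delta_{j-1}\prec\Delta_j$, so the pass selects all of them, from $j_+(T')$ down to $j_-(T')$.
\item Row $j_-(T')-1$, if it exists and is nonempty, ends on a diagonal $<T'$. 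Its right endpoint is therefore strictly smaller than the required value $e_{j_-}-1$, and the pass stops there.
\end{enumerate}
Truncating the selected rows removes precisely the cells on diagonal $T'$. This leaves $\mathfrak d_{T'-1}$, and the recorded segment is $[T'+j_-(T'),\,T'+j_+(T')]_\rho$.

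\emph{Conclusion.} Iterating this claim down through all diagonals with $h_\alpha(t)>0$ yields the formula for $\mathfrak p_\alpha^{\mathrm t}$. Taking the maximum of the lengths $h_\alpha(t)$ then gives $r_{\MW}(\mathfrak p_\alpha)=\max_t h_\alpha(t)$ by Definition~\ref{def:rMW}. As a sanity check, the case $\alpha=(1,\dots,1)$ puts all cells on $t=-1$ and gives a single segment of length $m$.

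The main obstacle is choosing the invariant family $\mathfrak d_T$ so that it survives truncation. After the first pass the rows are no longer juxtaposed, so an induction on ordered partitions alone would not close. The delicate points are (2) and (4): one must check monotonicity of the row-end diagonals, including when rows become empty, so that the pass neither stops early nor continues past $j_-(T')$.
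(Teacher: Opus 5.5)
Your proposal is correct and follows the same route as the paper. In both, the rows meeting a fixed diagonal form an interval $[j_-(t),j_+(t)]$ because consecutive shifted intervals share an endpoint, and each M\oe glin--Waldspurger pass strips exactly one diagonal from the top down. Your invariant $\mathfrak d_T$ supplies the verification (distinct right endpoints, $\Delta_{j-1}\prec\Delta_j$ along the diagonal, and termination at $j_-(T')$) that the paper's proof only asserts.

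One small slip: the segment chosen first in a pass is the last nonempty row, which is $j_+(T')$ and need not be row $k$ after the first pass. For example, when $\alpha=(3,1)$ the row $[3,3]_\rho$ is exhausted by the first pass. Your monotonicity observations in (1) and (2) already give the correct statement.
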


\begin{proof}
The shifted intervals
\[
  I_j=[s_{j-1}-j,s_j-1-j]\qquad(1\leq j\leq k)
\]
meet consecutively because, for \(1\leq j<k\), the right endpoint of \(I_j\)
equals the left endpoint of \(I_{j+1}\).
Hence, for fixed $t$, the rows counted by $h_\alpha(t)$ form an interval of
row indices.  In the M\oe glin--Waldspurger algorithm, the pass associated
with the diagonal $c-j=t$ selects exactly these rows, from right to left, and
records the segment whose endpoints are the first and last column numbers on
that diagonal, namely $t+j_-(t)$ and $t+j_+(t)$.  Repeating over all diagonals
gives the formula.  Its recorded segment has length
$j_+(t)-j_-(t)+1=h_\alpha(t)$.
\end{proof}

\medskip
\noindent\textit{Picture.}
Take $\alpha=(1,1,2,1,1)$.  Then
\[
  \mathfrak p_\alpha
  =[0,0]_\rho+[1,1]_\rho+[2,3]_\rho+[4,4]_\rho+[5,5]_\rho.
\]
Write these segments, in the displayed order, as
\(\Delta_1,\dots,\Delta_5\).  In the diagram below, \(\Delta_j\) is the row
indexed by \(j\), drawn at vertical coordinate \(1-j\), and the horizontal
coordinate is the original exponent \(c\).  The colored dots mark the shifted
diagonals \(t=c-j\).  The M\oe
glin--Waldspurger algorithm records one segment from each colored diagonal.
\begin{center}
\begin{tikzpicture}[
  x=.72cm,
  y=.58cm,
  every node/.style={font=\scriptsize},
  seg/.style={draw=black!45, line width=1.1pt, line cap=round},
  blueseg/.style={draw=blue!65, line width=1.25pt, line cap=round},
  greenseg/.style={draw=green!45!black, line width=1.25pt, line cap=round},
  dot/.style={circle, fill=black!45, inner sep=1.45pt},
  bluedot/.style={circle, fill=blue!65, inner sep=1.75pt},
  greendot/.style={circle, fill=green!45!black, inner sep=1.75pt},
  label/.style={font=\scriptsize}
]
  \node[label] at (2.5,1.5) { Segments in  $\mathfrak p_\alpha$};
  \draw[->, black!45] (-.25,.42) -- (5.55,.42);
  \foreach \c in {0,...,5} {
    \draw[black!45] (\c,.34) -- (\c,.50);
    \node[label] at (\c,.72) {\(\c\)};
  }
  \node[label] at (5.78,.42) {\(c\)};

  \foreach \j/\y in {1/0,2/-1,3/-2,4/-3,5/-4} {
    \node[label,left] at (-.45,\y) {\(\Delta_{\j}\)};
  }

  \draw[seg] (2,-2) -- (3,-2);

  \node[bluedot] at (0,0) {};
  \node[bluedot] at (1,-1) {};
  \node[bluedot] at (2,-2) {};
  \node[greendot] at (3,-2) {};
  \node[greendot] at (4,-3) {};
  \node[greendot] at (5,-4) {};
  \node[blue!65, label,right] at (.15,-.45) {\(t=-1\)};
  \node[green!45!black, label,left] at (4.85,-3.55) {\(t=0\)};

  \draw[->, black!55] (2.5,-4.55) -- (2.5,-5.15);
  \node[label,right] at (2.62,-4.85) {};

  \node[label] at (2.5,-5.75) {Segments in \(\mathfrak p_\alpha^{\mathrm t}\)};
  \node[label,left] at (-.45,-6.55) {\(t=-1\)};
  \draw[blueseg] (0,-6.55) -- (2,-6.55);
  \foreach \x in {0,1,2} {
    \node[bluedot] at (\x,-6.55) {};
  }
  \node[label,right] at (2.3,-6.55) {\([0,2]_\rho\)};

  \node[label,left] at (-.45,-7.55) {\(t=0\)};
  \draw[greenseg] (3,-7.55) -- (5,-7.55);
  \foreach \x in {3,4,5} {
    \node[greendot] at (\x,-7.55) {};
  }
  \node[label,right] at (5.3,-7.55) {\([3,5]_\rho\)};
\end{tikzpicture}
\end{center}
Hence
\[
  \mathfrak p_\alpha^{\mathrm t}=[0,2]_\rho+[3,5]_\rho,
  \qquad
  h_\alpha(-1)=h_\alpha(0)=3.
\]

\begin{proposition}
\label{prop:diagonal-complement}
Let $\beta=\alpha^\complement$.  Then, in their natural order,
\[
  \bigl(h_\alpha(-1),h_\alpha(0),\dots,h_\alpha(m-k-1)\bigr)
  =\beta.
\]
Consequently, the multiset of lengths of the segments in
$\mathfrak p_\alpha^{\mathrm t}$ is the multiset of parts of $\beta$.
\end{proposition}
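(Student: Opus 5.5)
The plan is a direct count of cells along shifted diagonals, using the description of $h_\alpha(t)$ as the number of cells of the row diagram lying on the diagonal $c-j=t$. Row $j$ occupies columns $s_{j-1},\dots,s_j-1$, so its cells have shifted coordinate $t=c-j$ ranging over the interval $I_j=[s_{j-1}-j,\,s_j-1-j]$. Consecutive intervals overlap in exactly one point, the right end of $I_j$, since $s_j-1-j=s_j-(j+1)$. The union of the $I_j$ is therefore the integer interval from $s_0-1=-1$ to $s_k-1-k=m-k-1$. This explains the indexing range $t=-1,\dots,m-k-1$, which has $m-k+1$ terms.

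The key step is to read off the diagonal counts column by column. Each column $c\in\{0,\dots,m-1\}$ lies in exactly one row $j(c)$, namely the unique $j$ with $s_{j-1}\le c\le s_j-1$. Hence the cell in column $c$ lies on the diagonal $t(c)=c-j(c)$. Passing from $c$ to $c+1$ gives:
\[
t(c+1)=\begin{cases}t(c)+1,& c+1\notin B(\alpha),\\ t(c),& c+1\in B(\alpha).\end{cases}
\]
So the map $c\mapsto t(c)$ is weakly increasing, starting at $t(0)=-1$. It increments exactly at the positions $c+1\notin B(\alpha)$, that is, at $c+1\in B(\alpha^\complement)=B(\beta)$.

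It follows that the fibers $h_\alpha(t)=\#\{c: t(c)=t\}$ are consecutive blocks of columns. These blocks are cut exactly at the break set $B(\beta)$. Their sizes, in increasing order of $t$, are therefore the successive differences of $0<b_1<\dots<b_{m-k}<m$, where the $b_i$ are the elements of $B(\beta)$. These differences are exactly the parts of $\beta$. The number of blocks is $|B(\beta)|+1=(m-1-(k-1))+1=m-k+1$, which matches the range of $t$. This proves the displayed equality.

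The consequence then follows from Proposition~\ref{prop:trivial-principal-series}. The segments of $\mathfrak p_\alpha^{\mathrm t}$ are indexed by the $t$ with $h_\alpha(t)>0$, and they have lengths $h_\alpha(t)$. All parts of $\beta$ are positive, so every $t$ in the range contributes, and none outside it does. The only point needing care is that the cells on a given diagonal lie in an interval of consecutive rows $j_-(t),\dots,j_+(t)$, each contributing one cell. This holds because the block of columns is contiguous and $j$ is monotone in $c$. Consequently $h_\alpha(t)=j_+(t)-j_-(t)+1$, consistent with the earlier proposition.
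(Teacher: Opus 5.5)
Your argument is correct and is essentially the paper's own proof: both track the diagonal index $c-j(c)$ column by column, observe that it stays constant exactly when the next column lies in $B(\alpha)$ and increases by one otherwise, and read off the run lengths as the ordered partition with break set $\{1,\dots,m-1\}\smallsetminus B(\alpha)$, namely $\beta$. Your additional checks (the range $-1\le t\le m-k-1$, the count of $m-k+1$ blocks, and the consecutive-rows remark feeding into Proposition~\ref{prop:trivial-principal-series}) spell out details the paper leaves implicit.
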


\begin{proof}
For \(r=0,1,\dots,m-1\), let \(j(r)\) be the unique index such that
\[
  s_{j(r)-1}\leq r\leq s_{j(r)}-1.
\]
Thus \(r\) is the column coordinate of an entry in row \(j(r)\).  Its
diagonal is
\[
  d(r)=r-j(r).
\]
When one passes from $r-1$ to $r$, the diagonal remains unchanged exactly
when $r\in B(\alpha)$, because in that case \(r\) is the first column of the
next row;
otherwise \(r-1\) and \(r\) lie in the same row, so the diagonal increases by
one, as illustrated in the preceding picture.  Consider the maximal
consecutive intervals of \(r\) on which \(d(r)\) is constant.  Their lengths
form an ordered partition with break set
\[
  \{1,\dots,m-1\}\smallsetminus B(\alpha).
\]
This ordered partition is
$\alpha^\complement=\beta$, while those run lengths are exactly the diagonal
counts $h_\alpha(-1),h_\alpha(0),\dots,h_\alpha(m-k-1)$.
\end{proof}

\begin{corollary}[Wave-front set of the generalized Steinberg quotient]
\label{cor:wf-principal-series}
Let $\lambda_\alpha$ be the partition attached to the wave-front set of
$\pi_\alpha=L(\mathfrak p_\alpha)$ and put
$\beta=\alpha^\complement$.  Then
\[
  \lambda_\alpha^\top=\beta^\downarrow,
  \qquad
  \lambda_\alpha=(\beta^\downarrow)^\top.
\]
In particular, for the generalized Steinberg representation attached to
$P_\beta$,
\[
  \operatorname{WF}(v_{P_\beta}^G)
  =\overline{\mathcal O_{(\beta^\downarrow)^\top}}^{\,\mathrm{an}},
\]
the analytic closure of the Richardson orbit of $P_\beta$.
\end{corollary}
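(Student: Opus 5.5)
The corollary combines Theorem~\ref{thm:MW-partition} with Proposition~\ref{prop:diagonal-complement}, and then transports the result to generalized Steinberg representations through Proposition~\ref{prop:parabolic-indexing}. I will carry out three steps.

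\emph{Step 1: compute $\lambda_\alpha^\top$.} Here $\rho$ is a character, so $d_\rho=1$. By Theorem~\ref{thm:MW-partition}, $\lambda_\alpha^\top$ is the list of lengths of the segments of $\mathfrak p_\alpha^{\mathrm t}$, arranged in decreasing order. Proposition~\ref{prop:diagonal-complement} identifies this multiset of lengths with the multiset of parts of $\beta=\alpha^\complement$. Hence $\lambda_\alpha^\top=\beta^\downarrow$. Transposition is an involution on partitions, so $\lambda_\alpha=(\beta^\downarrow)^\top$.

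\emph{Step 2: the generalized Steinberg case.} Proposition~\ref{prop:parabolic-indexing} gives $v_{P_\beta}^G\simeq\pi_{\beta^\complement}$. Apply Step 1 with $\alpha=\beta^\complement$. Since complementation is an involution, $\alpha^\complement=\beta$, and the partition of the wave-front set is $(\beta^\downarrow)^\top$. The wave-front set is then its analytic closure, by Lemma~\ref{lem:unique-wf-orbit} and the definition of $\lambda(\pi)$.

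One subtlety: the proof of Proposition~\ref{prop:parabolic-indexing} invokes Proposition~\ref{prop:diagonal-complement}, and I must avoid circularity. This is harmless, because Proposition~\ref{prop:diagonal-complement} is a purely combinatorial statement about the MW algorithm. Note also that the character $\rho=\nu^{-(m-1)/2}$ plays no role in the partition.

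\emph{Step 3: identify the Richardson orbit.} It remains to recall that the Richardson orbit of $P_\beta$ is $\mathcal O_{(\beta^\downarrow)^\top}$. This is standard. The Richardson orbit is the orbit induced from the zero orbit of the Levi $\prod\GL_{\beta_i}$. The zero orbit of $\GL_{\beta_i}$ has partition $(1^{\beta_i})$. By the induction rule quoted in the proof of Theorem~\ref{thm:MW-partition}, the parts add columnwise, so the $j$-th part of the induced partition is $\#\{i:\beta_i\ge j\}$. This is exactly $((\beta^\downarrow)^\top)_j$. Equivalently, a generic element of $\mathfrak n_\beta$ has kernel dimensions $\dim\ker X^j$ matching the column sums, and I would cite \cite{Spa82} for this.

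The only real point needing care is checking that the indexing conventions agree: Levi block sizes give $\beta$, and the dual lengths give $\beta$ rather than its complement. Steps 1 and 2 already settle this by the double complement.
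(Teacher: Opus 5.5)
Your proposal is correct and follows the paper's proof: Theorem~\ref{thm:MW-partition} together with Proposition~\ref{prop:diagonal-complement} (which rests on Proposition~\ref{prop:trivial-principal-series}) gives $\lambda_\alpha^\top=\beta^\downarrow$, and Proposition~\ref{prop:parabolic-indexing} together with the standard Richardson partition formula gives $\operatorname{WF}(v_{P_\beta}^G)$. Your double-complement bookkeeping and your derivation of $(\beta^\downarrow)^\top$ from the columnwise induction rule only spell out what the paper calls ``the usual formula for the Richardson partition.''
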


\begin{proof}
By Propositions~\ref{prop:trivial-principal-series} and
\ref{prop:diagonal-complement}, the segment lengths in
$\mathfrak p_\alpha^{\mathrm t}$ are the parts of $\beta$.  Theorem
\ref{thm:MW-partition} therefore gives
$\lambda_\alpha^\top=\beta^\downarrow$.  The final assertion follows from
Proposition~\ref{prop:parabolic-indexing} and the usual formula for the
Richardson partition of a parabolic in $\GL_m$.
\end{proof}

\begin{remark}
Different ordered standard parabolics \(P_\beta\) give distinct generalized
Steinberg representations \(v_{P_\beta}^G\).  Their wave-front sets
depend only on the associate class of \(P_\beta\).
\end{remark}

\subsection{Vanishing and pole counts}

Suppose now that $m=2n$.  The wave-front set criterion immediately gives the
following exact answer.

\begin{corollary}
\label{cor:trivial-principal-series-tjm}
For the constituent $\pi_\alpha=L(\mathfrak p_\alpha)$ of
$\Sigma_{2n}(\rho)$,
\[
  (\pi_\alpha)_{N,\psi}=0
  \quad\Longleftrightarrow\quad
  \max_t h_\alpha(t)\geq n+1.
\]
\end{corollary}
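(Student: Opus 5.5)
This corollary is a direct combination of Theorem~\ref{thm:corrected} with Proposition~\ref{prop:trivial-principal-series}, so the plan is a two-line reduction.

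First, since $\pi_\alpha=L(\mathfrak p_\alpha)$ is an irreducible admissible representation of $\GL_{2n}(F)$, Theorem~\ref{thm:corrected} gives
\[
  (\pi_\alpha)_{N,\psi}=0
  \quad\Longleftrightarrow\quad
  r_{\MW}(\mathfrak p_\alpha)\geq n+1.
\]
Second, Proposition~\ref{prop:trivial-principal-series} computes the Zelevinsky dual $\mathfrak p_\alpha^{\mathrm t}$ diagonal by diagonal. The segment recorded on the diagonal $t$ has length $j_+(t)-j_-(t)+1=h_\alpha(t)$, and every segment of $\mathfrak p_\alpha^{\mathrm t}$ arises this way. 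Hence $r_{\MW}(\mathfrak p_\alpha)=\max_t h_\alpha(t)$ by Definition~\ref{def:rMW}. Substituting this into the first equivalence gives the statement.

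No new input is required beyond these two results. The only step needing care is already handled inside the proof of Proposition~\ref{prop:trivial-principal-series}: one must check that the rows meeting a given diagonal form a consecutive block, so that the MW pass selects exactly those rows and the recorded segment has length $h_\alpha(t)$.

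As a consistency check, Proposition~\ref{prop:diagonal-complement} shows that the lengths $h_\alpha(t)$ are the parts of $\beta=\alpha^\complement$. The criterion is therefore equivalently that $\alpha^\complement$ has a part of size at least $n+1$. This agrees with Corollary~\ref{cor:wf-principal-series} combined with Theorem~\ref{thm:orbit-criterion}.
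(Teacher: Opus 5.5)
Your proposal is correct and is exactly the paper's argument: the paper proves this corollary by combining Theorem~\ref{thm:corrected} with the identity $r_{\MW}(\mathfrak p_\alpha)=\max_t h_\alpha(t)$ from Proposition~\ref{prop:trivial-principal-series}. You are also right that the real work sits inside that proposition, namely that each MW pass selects exactly the consecutive block of rows meeting one diagonal.
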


For $\rho=\nu^{-(2n-1)/2}$, the trivial representation corresponds to
$\alpha=(1,1,\dots,1)$; then $h_\alpha(-1)=2n$, so its twisted Jacquet module
is zero.

\begin{proof}
Combine Proposition~\ref{prop:trivial-principal-series} with
Theorem~\ref{thm:corrected}.
\end{proof}

For a positive integer $t$ and an ordered partition
$\alpha=(\alpha_1,\dots,\alpha_k)$ of $2n$, let
\[
  0=s_0<s_1<\cdots<s_k=2n
\]
be obtained from \(B(\alpha)\) by adjoining \(0\) and \(2n\).  Define
\[
  c_\alpha(t)
  =\#\left\{
  (j,q,u):
  \begin{array}{l}
  1\leq j<q\leq k,\\
  0\leq u<\min(\alpha_j,\alpha_q),\\
  t=s_{q-1}-s_j+1+u
  \end{array}
  \right\}.
\]

\begin{proposition}
\label{prop:principal-series-poles}
For $\pi_\alpha=L(\mathfrak p_\alpha)$,
\[
  \pord_{s=t}L(s,\pi_\alpha\times\pi_\alpha^\vee)=c_\alpha(t)
  \qquad(t\geq1).
\]
\end{proposition}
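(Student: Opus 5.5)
The plan is to compute the pole order directly from the factorization \eqref{eq:self-rs-multisegment} of Proposition~\ref{prop:positive-pole-pairs}, applied to the multisegment $\mathfrak p_\alpha=\sum_{j=1}^k\Delta_j$ with $\Delta_j=[s_{j-1},s_j-1]_\rho$. All segments lie on one cuspidal line. Here $\rho$ is a character, so $\rho\times\rho^\vee$ is the trivial character and
\[
L(s,\rho\times\rho^\vee)=(1-q^{-s})^{-1}.
\]
For real $s$ this has a simple pole only at $s=0$ and no zeros. Hence the order of pole of $L(s,\pi_\alpha\times\pi_\alpha^\vee)$ at the positive integer $t$ equals the number of triples $(i,j,u)$ with $1\le i,j\le k$ and $0\le u\le\min(\alpha_i,\alpha_j)-1$ such that $t+b_i-a_j-u=0$. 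By Proposition~\ref{prop:positive-pole-pairs}, this is the number of triples satisfying \eqref{eq:pair-pole-t} with $u=t+b_i-a_j$. Since poles only add, there is no cancellation, so it suffices to count these triples exactly.

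Substituting $b_i=s_i-1$ and $a_j=s_{j-1}$, the condition becomes
\[
u=t+s_i-1-s_{j-1},\qquad 0\le u\le\min(\alpha_i,\alpha_j)-1,
\]
that is, $t=s_{j-1}-s_i+1+u$. I then split the ordered pairs $(i,j)$ into three cases.

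\emph{Case $i=j$.} Here $u=t+\alpha_i-1\ge\alpha_i>\min(\alpha_i,\alpha_i)-1$ because $t\ge1$, so there is no contribution.

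\emph{Case $i>j$.} Here $s_i-1-s_{j-1}\ge\alpha_i+\alpha_j-1\ge\min(\alpha_i,\alpha_j)$. Since $t\ge1$, this forces $u>\min(\alpha_i,\alpha_j)-1$, so again there is no contribution.

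\emph{Case $i<j$.} Every admissible $u$ gives exactly one simple pole. Renaming $(i,j)$ as $(j,q)$, these triples are precisely the triples counted by $c_\alpha(t)$.

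Summing over all pairs then gives $\pord_{s=t}=c_\alpha(t)$.

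The main point needing care is the bookkeeping. First, the indexing convention for $(i,j)$ in \eqref{eq:self-rs-multisegment} must be matched correctly with that of $c_\alpha(t)$, namely that $j<q$ corresponds to the first variable preceding the second. Second, one must justify that distinct triples give independent simple poles rather than something that could cancel. This holds because the total $L$-function is a product of factors $(1-q^{-(s+c)})^{-1}$, each with a simple pole at $s=-c$ and no zeros, so pole orders simply add. The exclusion of the diagonal pairs and of the pairs with $i>j$ is the only genuine inequality check in the argument.
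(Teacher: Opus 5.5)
Your proposal is correct and follows essentially the same route as the paper. You expand $L(s,\pi_\alpha\times\pi_\alpha^\vee)$ via \eqref{eq:self-rs-multisegment}, show that only ordered pairs of rows whose first index is strictly smaller than the second give poles at positive integers, and identify the surviving triples with those counted by $c_\alpha(t)$; your explicit checks of the cases $i=j$ and $i>j$, and your remark that the factors $(1-q^{-(s+c)})^{-1}$ have no zeros so pole orders add, fill in details the paper only asserts.
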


\begin{proof}
Apply Proposition~\ref{prop:RS} to the two rows
$[s_{j-1},s_j-1]_\rho$ and $[s_{q-1},s_q-1]_\rho$, in this order in the
\(L\)-factor \eqref{eq:self-rs-multisegment}.  Pairs with $q\leq j$
give no pole at a positive integer.  For $j<q$, formula
\eqref{eq:pair-pole-t} says that the possible poles are precisely
\[
  s_{q-1}-s_j+1+u,
  \qquad
  0\leq u<\min(\alpha_j,\alpha_q).
\]
Counting the pairs with respect to this order gives $c_\alpha(t)$.  
\end{proof}

The above proposition allows us to show that the implication from the pole inequalities to vanishing
 of twisted Jacquet module in Conjecture~\ref{conj:prasad} is false.  The following family gives counterexamples in every
even rank at least six.

\begin{corollary}
\label{cor:principal-series-pole-counterexamples}
Let $n\geq3$ and let
\[
  \alpha_n=(\underbrace{1,\dots,1}_{n-1},2,
  \underbrace{1,\dots,1}_{n-1})
\]
be an ordered partition of $2n$.  Then $\pi_{\alpha_n}$ satisfies Prasad's pole
inequalities, but
\[
  (\pi_{\alpha_n})_{N,\psi}\neq0.
\]
For $\rho=\nu^{-(2n-1)/2}$ these are counterexamples inside the principal
series containing the trivial representation.
\end{corollary}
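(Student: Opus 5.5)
The plan is to verify the two assertions separately: non-vanishing through the diagonal counts $h_{\alpha_n}$, and the pole inequalities through the explicit count $c_{\alpha_n}(t)$ of Proposition~\ref{prop:principal-series-poles}.

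\textbf{Non-vanishing.} First record the data of $\alpha_n$. There are $k=2n-1$ rows, and the break set is
\[
B(\alpha_n)=\{1,\dots,m-1\}\smallsetminus\{n\},\qquad m=2n .
\]
The complementary ordered partition is therefore $\beta=\alpha_n^\complement=(n,n)$. By Proposition~\ref{prop:diagonal-complement}, the diagonal counts $h_{\alpha_n}(t)$ run through the parts of $\beta$, so $\max_t h_{\alpha_n}(t)=n<n+1$. Corollary~\ref{cor:trivial-principal-series-tjm} then gives $(\pi_{\alpha_n})_{N,\psi}\neq0$. Equivalently, $\mathfrak p_{\alpha_n}^{\mathrm t}$ consists of two segments of length $n$, and one can apply Theorem~\ref{thm:corrected} directly.

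\textbf{Pole inequalities: setup.} Write the rows of $\mathfrak p_{\alpha_n}$ explicitly:
\[
\Delta_j=[j-1,j-1]_\rho\ (j<n),\qquad
\Delta_n=[n-1,n]_\rho,\qquad
\Delta_q=[q,q]_\rho\ (q>n).
\]
It suffices to bound $c_{\alpha_n}(t)$ from below, so I only count terms with $u=0$. Such a term is an ordered pair $j<q$ with
\[
t=b(\Delta_q)-e(\Delta_j).
\]
I sort these pairs into four families.

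1. Both indices are less than $n$. Then $t=q-j$, which gives $n-1-t$ pairs for $1\le t\le n-2$.
2. Both indices are greater than $n$. Again $t=q-j$, which gives $n-1-t$ pairs.
3. $j<n$ and $q=n$. Then $t=n-j$, which gives exactly one pair for each $1\le t\le n-1$.
4. $j=n$ and $q>n$. Then $t=q-n$, which gives exactly one pair for each $1\le t\le n-1$.

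\textbf{Pole inequalities: the bound.} For $1\le t\le n-1$, the four families give
\[
c_{\alpha_n}(t)\ge 2(n-1-t)+2=2n-2t\ge n-t+1 .
\]
The last inequality is equivalent to $t\le n-1$, so it holds on this range. The remaining case $t=n$ requires at least one pole, and families 1–4 supply none there. For this I use the mixed pairs $j<n<q$, where
\[
t=b(\Delta_q)-e(\Delta_j)=q-(j-1)=q-j+1 .
\]
Taking $j=2$ and $q=n+1$ gives $t=n$. This pair is admissible because $n\ge3$ ensures $2<n$. Hence $c_{\alpha_n}(n)\ge1$.

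\textbf{Conclusion.} Proposition~\ref{prop:principal-series-poles} turns these lower bounds on $c_{\alpha_n}(t)$ into
\[
\pord_{s=t}L(s,\pi_{\alpha_n}\times\pi_{\alpha_n}^\vee)\ge n-t+1\qquad(1\le t\le n),
\]
which is exactly Prasad's pole inequalities. The final sentence of the statement is just Proposition~\ref{prop:parabolic-indexing}.

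\textbf{Main obstacle.} The delicate point is the boundary value $t=n$, where the "diagonal" pairs run out. It is the only place the hypothesis $n\ge3$ is used, and the mixed pair $(\Delta_2,\Delta_{n+1})$ must be checked carefully against condition \eqref{eq:pair-pole-t-endpoints}. The rest is bookkeeping, apart from making sure that the pairs counted in the four families are genuinely distinct, which holds because each family is a different set of ordered pairs $(j,q)$.
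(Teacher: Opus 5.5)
Your proof is correct and follows the paper's route. Like the paper, you get nonvanishing from $\max_t h_{\alpha_n}(t)=n$ (you read this off from $\alpha_n^\complement=(n,n)$ via Proposition~\ref{prop:diagonal-complement}, where the paper computes $h_{\alpha_n}(-1)=h_{\alpha_n}(0)=n$ directly), and the pole inequalities from Proposition~\ref{prop:principal-series-poles}. The only difference is that the paper records the exact counts $c_{\alpha_n}(1)=2n-2$ and $c_{\alpha_n}(t)=2n-t-2$ for $2\le t\le n$ (your four families plus $t-2$ mixed pairs), so there $n\ge3$ enters uniformly in $t$; your lower bound $2n-2t$ together with the single mixed pair $(\Delta_2,\Delta_{n+1})$ at $t=n$ already suffices.
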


\begin{proof}
A computation following the definition of \(h_{\alpha_n}\) gives
\[
  h_{\alpha_n}(-1)=h_{\alpha_n}(0)=n,
  \qquad
  \max_t h_{\alpha_n}(t)=n.
\]
Corollary~\ref{cor:trivial-principal-series-tjm} gives
$(\pi_{\alpha_n})_{N,\psi}\neq0$.

The pole counts are
\[
  c_{\alpha_n}(1)=2n-2,
  \qquad
  c_{\alpha_n}(t)=2n-t-2\quad(2\leq t\leq n).
\]
 For $n\geq3$ the
result satisfies $c_{\alpha_n}(t)\geq n-t+1$ for all $1\leq t\leq n$.
\end{proof}



\section{Examples}
\label{sec:further-examples}

We record a few examples which were mentioned as difficult for the
methods of \cite[\S 6.5]{HV26}.  From the present point of view they are
straightforward.

\begin{proposition}[Speh and Arthur-type rectangles]
\label{prop:speh-arthur-examples}
Let \(\pi=L(\mathfrak m)\) be an irreducible representation whose
Zelevinsky dual multisegment is a sum of rectangular blocks
\[
  \mathfrak m^{\mathrm t}
  =
  \sum_i\sum_{j=0}^{\ell_i-1}
  [a_i+j,a_i+j+t_i-1]_{\rho_i}.
\]
Then
\[
  \lambda(\pi)^\top
  =
  \text{the decreasing rearrangement of the integers }
  t_i
  \text{ repeated } \ell_i d_{\rho_i}\text{ times}.
\]
In particular, if \(\pi\) is a representation of \(\GL_{2n}(F)\), then
\[
  \pi_{N,\psi}=0
  \quad\Longleftrightarrow\quad
  \max_i t_i\geq n+1.
\]
\end{proposition}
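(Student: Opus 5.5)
This is a direct application of Theorem~\ref{thm:MW-partition} followed by Theorem~\ref{thm:corrected}, so the plan is essentially bookkeeping.

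First, read off the segments of \(\mathfrak m^{\mathrm t}\). For fixed \(i\), the inner sum
\[
  \sum_{j=0}^{\ell_i-1}[a_i+j,a_i+j+t_i-1]_{\rho_i}
\]
consists of \(\ell_i\) segments, one for each \(j\). Each has left endpoint \(a_i+j\) and right endpoint \(a_i+j+t_i-1\), so each has length exactly \(t_i\), independent of \(j\). All of them lie on the cuspidal line of \(\rho_i\), so each contributes with weight \(d_{\rho_i}\).

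Second, apply Theorem~\ref{thm:MW-partition}. It places \(\len(\Gamma)\) with multiplicity \(d_\rho\) for every segment \(\Gamma\in\mathfrak m^{\mathrm t}\), and then sorts these integers in decreasing order. The \(i\)-th block contributes the integer \(t_i\) exactly \(\ell_i d_{\rho_i}\) times. Hence \(\lambda(\pi)^\top\) is the decreasing rearrangement of the \(t_i\), each repeated \(\ell_i d_{\rho_i}\) times, which is the first assertion.

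Two points need care here, though neither should be a real obstacle.

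1. If several blocks share a cuspidal line, or even overlap, the formula is additive over the segments of \(\mathfrak m^{\mathrm t}\) counted with multiplicity. No interaction between blocks has to be controlled.
2. Since \(\mathfrak m^{\mathrm t}\) is taken as given, we never need to invert the M\oe glin--Waldspurger algorithm.

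Third, for \(\GL_{2n}(F)\), the first part of the partition is
\[
  \bigl(\lambda(\pi)^\top\bigr)_1=r_{\MW}(\mathfrak m)=\max_i t_i .
\]
This is the largest entry of the list, which is nonempty because \(\ell_i\ge 1\) and \(d_{\rho_i}\ge1\). Theorem~\ref{thm:corrected}, equivalently Theorem~\ref{thm:orbit-criterion}, then gives \(\pi_{N,\psi}=0\) if and only if \(\max_i t_i\geq n+1\).

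One may optionally note that in the vanishing case the segment of length at least \(n+1\) forces \(d_{\rho_i}=1\) for the relevant \(i\), as in the proof of Theorem~\ref{thm:vanishing-parabolic-subquotient}. This is not needed for the equivalence.
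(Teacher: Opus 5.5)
Your proposal is correct and follows essentially the same route as the paper. The paper also reads off that each segment in the \(i\)-th block has length \(t_i\) and contributes it with multiplicity \(d_{\rho_i}\) via Theorem~\ref{thm:MW-partition}, and then invokes Theorem~\ref{thm:corrected} for the vanishing criterion.
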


\begin{proof}
Each segment in the \(i\)-th rectangular block has length \(t_i\), and it
contributes this length with multiplicity \(d_{\rho_i}\) to
\(\lambda(\pi)^\top\) by Theorem~\ref{thm:MW-partition}.  The vanishing
criterion is then Theorem~\ref{thm:corrected}.
\end{proof}

This includes the usual Speh examples.  If
\(\delta=\delta([a,a+\ell-1]_\rho)\) is essentially square-integrable and
\(U(\delta,t)\) denotes the Speh representation built from \(t\) consecutive
twists of \(\delta\), then, up to a harmless common unramified twist,
\[
  \mathfrak m\bigl(U(\delta,t)\bigr)^{\mathrm t}
  =
  \sum_{j=0}^{\ell-1}[a+j,a+j+t-1]_\rho.
\]
Thus
\[
  \lambda\bigl(U(\delta,t)\bigr)^\top
  =
  (\underbrace{t,\dots,t}_{\ell d_\rho\text{ times}}),
  \qquad
  \lambda\bigl(U(\delta,t)\bigr)
  =
  (\underbrace{\ell d_\rho,\dots,\ell d_\rho}_{t\text{ times}}).
\]
If \(\ell d_\rho t=2n\), then
\[
  U(\delta,t)_{N,\psi}=0
  \quad\Longleftrightarrow\quad
  t\geq n+1.
\]
The discrete-series case is \(t=1\), hence gives nonzero twisted Jacquet
module.  The trivial representation of \(\GL_{2n}(F)\) is the opposite
extreme, with \(\ell=d_\rho=1\) and \(t=2n\), hence has zero twisted Jacquet
module.  More generally, let
\[
  \Psi=\bigoplus_i \rho_i\boxtimes S_{\ell_i}\boxtimes S_{t_i}
\]
be an Arthur parameter for some \(\GL_N(F)\) in the sense of Arthur
\cite[\S1.3]{Art13}, where \(S_r\) denotes the
\(r\)-dimensional irreducible representation of \(\operatorname{SL}_2(\mathbb C)\).  We use
the same symbol \(\rho_i\) for the supercuspidal representation corresponding
to the first factor under the local Langlands correspondence.  For \(\GL_N\),
the Arthur packet attached to \(\Psi\) is a singleton; we call its member the
Arthur-type representation attached to \(\Psi\).  With the present
normalization, and up to a harmless common unramified twist, the summand
\(\rho_i\boxtimes S_{\ell_i}\boxtimes S_{t_i}\) contributes the rectangular
piece
\[
  \sum_{j=0}^{\ell_i-1}[a_i+j,a_i+j+t_i-1]_{\rho_i}
\]
to the Zelevinsky dual multisegment.  Then the preceding proposition applies
directly: the relevant integers are the segment lengths \(t_i\).

\begin{proposition}
\label{prop:trivial-parabolic-induction-examples}
Let \(\beta=(\beta_1,\dots,\beta_r)\) be an ordered partition of \(2n\), and
let \(P_\beta=M_\beta U_\beta\) be the standard parabolic subgroup of
\(G=\GL_{2n}(F)\) with Levi size ordered partition \(\beta\).  Let
\[
  \chi=\chi_1\boxtimes\cdots\boxtimes\chi_r
\]
be a one-dimensional representation of
\[
  M_\beta\simeq \GL_{\beta_1}(F)\times\cdots\times\GL_{\beta_r}(F),
\]
inflated to \(P_\beta\), and put
\[
  I_\beta(\chi)=\Ind_{P_\beta}^G(\chi).
\]
Then
\[
  I_\beta(\chi)_{N,\psi}=0
  \quad\Longleftrightarrow\quad
  \max_i\beta_i\geq n+1.
\]
\end{proposition}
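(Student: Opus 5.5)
Since $(-)_{N,\psi}$ is exact and $I_\beta(\chi)$ has finite length, $I_\beta(\chi)_{N,\psi}=0$ if and only if every irreducible subquotient $\pi$ of $I_\beta(\chi)$ has $\pi_{N,\psi}=0$. By Theorem~\ref{thm:corrected}, this happens if and only if every such $\pi=L(\mathfrak m)$ has $r_{\MW}(\mathfrak m)\geq n+1$. So I will show two things: if $\max_i\beta_i\geq n+1$ then all subquotients vanish, and if $\max_i\beta_i\leq n$ then some subquotient does not vanish.

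Write $\chi_i=Z(\Gamma_i)$, where $\Gamma_i$ is a segment of length $\beta_i$ on the line of a character of $F^\times$. By \cite[Theorem 7.1]{Zel80}, every irreducible subquotient of $Z(\Gamma_1)\times\cdots\times Z(\Gamma_r)$ has the form $Z(\mathfrak n)$, where $\mathfrak n$ is obtained from $\Gamma_1+\cdots+\Gamma_r$ by a sequence of elementary operations. Each operation replaces a linked pair $\Delta\prec\Delta'$ by $\Delta\cup\Delta'$ and $\Delta\cap\Delta'$. Such an operation never decreases the maximal segment length, since $\len(\Delta\cup\Delta')>\max(\len\Delta,\len\Delta')$. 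Moreover $Z(\mathfrak n)=L(\mathfrak n^{\mathrm t})$ and $(\mathfrak n^{\mathrm t})^{\mathrm t}=\mathfrak n$, so $r_{\MW}(\mathfrak n^{\mathrm t})$ is the maximal segment length in $\mathfrak n$.

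Suppose $\max_i\beta_i\geq n+1$. Then every subquotient $Z(\mathfrak n)$ has a segment of length at least $n+1$ in $\mathfrak n$, so by Theorem~\ref{thm:corrected} each one has zero twisted Jacquet module, and hence $I_\beta(\chi)_{N,\psi}=0$. Alternatively, one could use Theorem~\ref{thm:MW-partition} to read off $\lambda(\pi)^\top$ and invoke Theorem~\ref{thm:orbit-criterion}.

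Suppose instead $\max_i\beta_i\leq n$. The subquotient $Z(\Gamma_1+\cdots+\Gamma_r)$ occurs in $I_\beta(\chi)$ by \cite[Theorem 7.1]{Zel80}. This holds regardless of the order of the factors, because permuting factors does not change the semisimplification \cite[Theorem 1.9]{Zel80}. All segments of $\Gamma_1+\cdots+\Gamma_r$ have length at most $n$, so Theorem~\ref{thm:corrected} gives that this subquotient has nonzero twisted Jacquet module. By exactness, $I_\beta(\chi)_{N,\psi}\neq0$.

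The main obstacle is the precise use of Zelevinsky's composition-factor theorem. One must confirm that the elementary-operation description applies to products of the one-dimensional $Z(\Gamma_i)$ lying on different cuspidal lines. One must also confirm that in the $Z$-classification, elementary operations only lengthen the longest segment and never shorten it. I would handle cuspidal lines separately, since the product decomposes as a product over lines, and then verify the monotonicity of the maximal length directly from the union/intersection formula.
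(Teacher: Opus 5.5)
Your proposal is correct and follows essentially the same route as the paper. You write each $\chi_i$ as $Z$ of a segment of length $\beta_i$, use Zelevinsky's composition-factor theorem \cite[Theorem 7.1]{Zel80} together with the fact that elementary operations never shrink the longest segment, and conclude via $Z(\mathfrak n)=L(\mathfrak n^{\mathrm t})$, Theorem~\ref{thm:corrected}, and exactness of $(-)_{N,\psi}$, with $Z(\Gamma_1+\cdots+\Gamma_r)$ witnessing nonvanishing when $\max_i\beta_i\leq n$ (the bound $n$ you use there is the right one; the paper's text has a typo at that point).
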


\begin{proof}
For each \(i\), since \(\chi_i\) is one-dimensional, there is a character
\(\rho_i\) of \(F^\times\) such that
\[
  \chi_i=Z(\Delta_i),
  \qquad
  \Delta_i=[0,\beta_i-1]_{\rho_i}.
\]
Thus
\[
  I_\beta(\chi)
  =
  Z(\Delta_1)\times\cdots\times Z(\Delta_r).
\]
Put \(\mathfrak m=\Delta_1+\cdots+\Delta_r\).  By Zelevinsky's theorem on
composition factors \cite[Theorem 7.1]{Zel80}, every irreducible composition
factor of \(I_\beta(\chi)\) is of the form \(Z(\mathfrak n)\), where
\(\mathfrak n\) is obtained from \(\mathfrak m\) by a finite sequence of
elementary operations on linked pairs of segments; moreover \(Z(\mathfrak m)\)
itself occurs as a composition factor.  An elementary operation replaces two
linked segments by their union and intersection, so it cannot decrease the
maximum segment length.

If \(\max_i\beta_i\geq n+1\), then every such \(\mathfrak n\) contains a
segment of length at least \(n+1\).  Since
\[
  Z(\mathfrak n)=L(\mathfrak n^{\mathrm t}),
\]
Theorem~\ref{thm:corrected} gives \(Z(\mathfrak n)_{N,\psi}=0\) for every
composition factor.  Exactness of the twisted Jacquet functor then gives
\(I_\beta(\chi)_{N,\psi}=0\).

Conversely, if \(\max_i\beta_i\leq m\), then the composition factor
\(Z(\mathfrak m)\) has nonzero twisted Jacquet module by
Theorem~\ref{thm:corrected}, because all segments of \(\mathfrak m\) have
length at most \(n\).  Exactness again implies
\(I_\beta(\chi)_{N,\psi}\neq0\).
\end{proof}

These examples illustrate the role of our criterion.  Direct
geometric-lemma computations for such induced representations or Speh
representations can be quite involved, but the vanishing of the twisted
Jacquet module is controlled by the largest relevant segment length.


\section{The case of \texorpdfstring{$\GL_4$}{GL4}}
\label{sec:GL4}

\subsection{An explicit counterexample}

Let $\rho$ be a smooth character of $F^\times$ and consider
\[
  \mathfrak m
  =[0,0]_\rho+[1,1]_\rho+[3,3]_\rho+[4,4]_\rho,
  \qquad
  \pi=L(\mathfrak m),
\]
an irreducible representation of $\GL_4(F)$.
For $a\in\mathbb Z$, put
\[
  \eta_a(g)=\rho(\det g)\,|\det g|_F^{a+1/2}
  \qquad(g\in\GL_2(F)).
\]

\begin{proposition}
\label{prop:example}
For $n=2$ and the above representation, as a representation of the
stabilizer $\Delta\GL_2(F)$ of $\psi$ in the Levi subgroup,
\[
  \pi_{N,\psi}\simeq \eta_3\otimes\eta_0.
\]
In particular, $\dim_{\mathbb C}\pi_{N,\psi}=1$.
\end{proposition}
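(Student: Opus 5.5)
Compute $\mathfrak m^{\mathrm t}$ with the M\oe glin--Waldspurger algorithm, realize $\pi$ as a subquotient of a suitable induction $\pi_1\times\pi_2$ with $\pi_1$ one-dimensional, and read off $\pi_{N,\psi}$ from Corollary~\ref{cor:unique-nonzero-parabolic-subquotient}.

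\emph{The Zelevinsky dual.} Run Definition~\ref{def:MW-algorithm} on $\mathfrak m=[0,0]+[1,1]+[3,3]+[4,4]$. The first pass starts with $[4,4]$ and then looks for a segment ending at $3$ which precedes $[4,4]$; $[3,3]\prec[4,4]$, so it is taken. There is no segment ending at $2$, so the pass records $[3,4]$ and deletes both rows. The second pass similarly records $[0,1]$. Hence
\[
 \mathfrak m^{\mathrm t}=[0,1]_\rho+[3,4]_\rho,\qquad \pi=Z([0,1]_\rho+[3,4]_\rho),
\]
and $r_{\MW}(\mathfrak m)=2=n$. By Theorem~\ref{thm:corrected}, $\pi_{N,\psi}\neq0$.

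\emph{Identifying the ambient induction.} The two segments $[0,1]$ and $[3,4]$ are not linked, since $3>1+1$. Therefore
\[
 \pi=Z([3,4]_\rho)\times Z([0,1]_\rho)
\]
is irreducible by Zelevinsky's irreducibility criterion \cite{Zel80}. Each factor is one-dimensional. For $Z([a,a+1]_\rho)$ the character is $g\mapsto\rho(\det g)|\det g|_F^{a+1/2}$, using the formula in the proof of Theorem~\ref{thm:vanishing-parabolic-subquotient} with $b=a+1$, $n=2$. So $Z([3,4]_\rho)=\eta_3$ and $Z([0,1]_\rho)=\eta_0$.

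\emph{Applying the corollary.} Take $\pi_1=\eta_3$ and $\pi_2=\eta_0$ in Corollary~\ref{cor:unique-nonzero-parabolic-subquotient}. Since $I=\pi_1\times\pi_2=\pi$ is irreducible, $\pi_{N,\psi}=I_{N,\psi}\simeq\eta_3\otimes\eta_0$ as a $\Delta\GL_2(F)$-module, and this is one-dimensional. This is consistent with the corollary's description of the unique nonvanishing subquotient as $Z(\Gamma+\mathfrak n)$ with $\Gamma=[3,4]$ and $\mathfrak n=[0,1]$.

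\emph{Main obstacle.} The only delicate point is the ordering convention. The paper fixes $\psi$ on $N=\{n(X)\}$ and the open orbit $W_0=0\oplus F^n$, and the fiber action there is $\pi_1(g)\otimes\pi_2(g)$ in the order of induction. So one must make sure that $\eta_3$ sits in the first slot. Because $\pi$ is irreducible, $\eta_3\times\eta_0\simeq\eta_0\times\eta_3$. Either order gives the same product character $\eta_3\eta_0=\eta_0\eta_3$ on $\Delta\GL_2(F)$, so the tensor $\eta_3\otimes\eta_0$ is unambiguous as a $\Delta\GL_2$-module. Writing the induction with $\eta_3$ first is therefore purely a matter of matching the stated form.
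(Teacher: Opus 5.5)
Your proposal is correct and follows essentially the same route as the paper: identify $\pi$ with the irreducible product $\eta_3\times\eta_0$ (the segments $[0,1]_\rho$ and $[3,4]_\rho$ are unlinked), then apply Corollary~\ref{cor:unique-nonzero-parabolic-subquotient} with $\pi_1=\eta_3$, $\pi_2=\eta_0$. The only difference is how you reach $\pi\simeq\eta_3\times\eta_0$: the paper reads it off directly from the surjection $\nu^4\rho\times\nu^3\rho\times\nu\rho\times\rho\twoheadrightarrow\eta_3\times\eta_0$ of the standard module, whereas you first compute $\mathfrak m^{\mathrm t}=[0,1]_\rho+[3,4]_\rho$ by the M\oe glin--Waldspurger algorithm, which is a harmless detour.
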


\begin{proof}
The quotient maps
$\nu^{a+1}\rho\times\nu^a\rho\twoheadrightarrow\eta_a$ give
\[
  \nu^4\rho\times\nu^3\rho\times\nu\rho\times\rho
  \twoheadrightarrow
  \eta_3\times\eta_0.
\]
The standard irreducibility criterion
therefore gives
\[
  \pi\simeq\eta_3\times\eta_0.
\]
Applying Corollary~\ref{cor:unique-nonzero-parabolic-subquotient} with
\(n=2\), \(\pi_1=\eta_3\), and \(\pi_2=\eta_0\), we get
\[
  \pi_{N,\psi}\simeq(\eta_3\times\eta_0)_{N,\psi}
  \simeq\eta_3\otimes\eta_0.
\]
The last representation is one-dimensional.
\end{proof}

We now test the pole-to-vanishing implication in Conjecture~\ref{conj:prasad}.

\begin{proposition}
\label{prop:poles}
For the representation in Proposition \ref{prop:example},
\[
  \pord_{s=1}L(s,\pi\times\pi^\vee)=2,
  \qquad
  \pord_{s=2}L(s,\pi\times\pi^\vee)=1.
\]
Consequently, the pole conditions in Prasad's conjecture hold for $\pi$, but
$\pi_{N,\psi}\neq0$.  
\end{proposition}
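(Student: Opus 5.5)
The plan is to read off the pole orders directly from the product formula \eqref{eq:self-rs-multisegment} of Proposition~\ref{prop:positive-pole-pairs}, and then to combine them with Proposition~\ref{prop:example} for the nonvanishing. All four segments of $\mathfrak m=[0,0]_\rho+[1,1]_\rho+[3,3]_\rho+[4,4]_\rho$ lie on the single cuspidal line of $\rho$ and have length $1$. So for an ordered pair $(\Delta,\Delta')=([b,b]_\rho,[a',a']_\rho)$ we have $\min(\len,\len')-1=0$. Condition \eqref{eq:pair-pole-t} then reduces to $t+b-a'=0$, i.e.\ $a'=b+t$; equivalently, in the form \eqref{eq:pair-pole-t-endpoints}, $b+t\le a'\le b+t\le a'$. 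Each such ordered pair contributes exactly one simple factor $L(s-t,\rho\times\rho^\vee)$ with a pole at $s=t$. Since local $L$-factors have no zeros, these poles add up. Hence $\pord_{s=t}L(s,\pi\times\pi^\vee)$ equals the number of ordered pairs $(x,y)$ of points of $\{0,1,3,4\}$ with $y-x=t$.

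The counting is then immediate. For $t=1$ the pairs are $(0,1)$ and $(3,4)$, giving order $2$. For $t=2$ the only pair is $(1,3)$, giving order $1$. The remaining differences are $3$ and $4$, coming from $(0,3)$, $(1,4)$ and $(0,4)$, so they play no role for $t\le n=2$. I will note that the degenerate length-one case of Proposition~\ref{prop:RS} produces exactly one factor per ordered pair, so there are no hidden multiplicities to worry about.

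With $n=2$, the inequalities in Conjecture~\ref{conj:prasad} are $\pord_{s=1}\ge 2$ and $\pord_{s=2}\ge 1$, and both hold with equality. On the other hand, Proposition~\ref{prop:example} gives $\pi_{N,\psi}\simeq\eta_3\otimes\eta_0\neq0$. As an independent check, I would run the M\oe glin--Waldspurger algorithm of Definition~\ref{def:MW-algorithm}: the first pass selects $[4,4]$ then $[3,3]$ and stops, since $[1,1]\not\prec[3,3]$; the second pass selects $[1,1]$ then $[0,0]$. This gives $\mathfrak m^{\mathrm t}=[3,4]_\rho+[0,1]_\rho$, so $r_{\MW}(\mathfrak m)=2<n+1$, and Theorem~\ref{thm:corrected} again yields nonvanishing.

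No step is a real obstacle. The only point needing care is the bookkeeping of ordered versus unordered pairs in \eqref{eq:self-rs-multisegment}. Pairs with $a'<b+t$, in particular the reversed pairs, contribute no pole at positive $t$, so only the increasing ordered pairs are counted.
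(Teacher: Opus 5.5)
Your proposal is correct and follows the paper's proof. Both arguments use multiplicativity to write $L(s,\pi\times\pi^\vee)=\prod_{x,y\in\{0,1,3,4\}}L(s+x-y,\rho\times\rho^\vee)$, count the ordered pairs with $y-x=t$, and cite Proposition~\ref{prop:example} for $\pi_{N,\psi}\neq0$; the paper reaches the product through the Langlands parameter $\rho\nu^0\oplus\rho\nu^1\oplus\rho\nu^3\oplus\rho\nu^4$, while you use the length-one case of \eqref{eq:self-rs-multisegment}, and your extra M\oe glin--Waldspurger check giving $\mathfrak m^{\mathrm t}=[3,4]_\rho+[0,1]_\rho$ is a valid independent confirmation via Theorem~\ref{thm:corrected}.
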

Thus Proposition this is a counterexample
to the conjecture.

\begin{proof}
The Langlands parameter of $\pi$ is
\[
  \phi_\pi
  =\rho\nu^0\oplus\rho\nu^1\oplus\rho\nu^3\oplus\rho\nu^4.
\]
With $S=\{0,1,3,4\}$, multiplicativity gives
\[
  L(s,\pi\times\pi^\vee)
  =\prod_{a,b\in S}L(s+a-b,\one).
\]
The factor $L(s+a-b,\one)$ has a real pole at $s=t$ exactly when $b-a=t$.
 The asserted pole orders follow.  Proposition
\ref{prop:example} gives the contradiction to the proposed vanishing
criterion.
\end{proof}

\subsection{All constituents of the principal series containing the trivial representation}

We spell out the case $m=4$.  Write $[a]_\rho=[a,a]_\rho$ and
\[
  \pi_\alpha=L(\mathfrak p_\alpha).
\]
Put
\[
  \kappa=\rho^2\nu^3.
\]
The eight subquotients are as follows:
\[
\begin{array}{c|c|c|c|c}
\alpha & \mathfrak p_\alpha & \mathfrak p_\alpha^{\mathrm t}
& r_{\MW} & (\pi_\alpha)_{N,\psi}\\ \hline
(1,1,1,1) &
[0]_\rho+[1]_\rho+[2]_\rho+[3]_\rho &
[0,3]_\rho & 4 & 0\\
(2,1,1) &
[0,1]_\rho+[2]_\rho+[3]_\rho &
[1,3]_\rho+[0]_\rho & 3 & 0\\
(1,2,1) &
[0]_\rho+[1,2]_\rho+[3]_\rho &
[0,1]_\rho+[2,3]_\rho & 2 & \neq0\\
(1,1,2) &
[0]_\rho+[1]_\rho+[2,3]_\rho &
[0,2]_\rho+[3]_\rho & 3 & 0\\
(2,2) &
[0,1]_\rho+[2,3]_\rho &
[1,2]_\rho+[0]_\rho+[3]_\rho & 2 & \neq0\\
(3,1) &
[0,2]_\rho+[3]_\rho &
[2,3]_\rho+[1]_\rho+[0]_\rho & 2 & \neq0\\
(1,3) &
[0]_\rho+[1,3]_\rho &
[0,1]_\rho+[2]_\rho+[3]_\rho & 2 & \neq0\\
(4) &
[0,3]_\rho &
[0]_\rho+[1]_\rho+[2]_\rho+[3]_\rho & 1 & \neq0.
\end{array}
\]

We now compute the nonzero modules.  Put
\[
  \delta_i=\delta([i,i+1]_\rho),
  \qquad
  \eta_i=Z([i,i+1]_\rho)
  \quad(i=0,2),
\]
so that $\eta_i(g)=\rho(\det g)|\det g|_F^{i+1/2}$ is a character of
$\GL_2(F)$.  Tensor products in the following table are tensor products as
representations of the diagonal $\GL_2(F)$:
\[
\begin{array}{c|c|c}
\alpha & \pi_\alpha & (\pi_\alpha)_{N,\psi}\\ \hline
(1,2,1) & L([0]_\rho+[1,2]_\rho+[3]_\rho)
& \eta_2\otimes\eta_0\\
(2,2) & L([0,1]_\rho+[2,3]_\rho)
& (\kappa\nu^{3/2})\times(\kappa\nu^{-3/2})\\
(3,1) & L([0,2]_\rho+[3]_\rho)
& \eta_2\otimes\delta_0\\
(1,3) & L([0]_\rho+[1,3]_\rho)
& \delta_2\otimes\eta_0\\
(4) & L([0,3]_\rho)
& \delta_2\otimes\delta_0.
\end{array}
\]
Here $(\kappa\nu^{3/2})\times(\kappa\nu^{-3/2})$ denotes normalized
induction from the Borel subgroup of $\GL_2(F)$.  In the untwisted case
$\rho=\nu^{-3/2}$, one has $\kappa=\one$, so this term is
$\nu^{3/2}\times\nu^{-3/2}$.

Indeed, the short exact sequences
\[
\begin{gathered}
0\to L([0]_\rho+[1,2]_\rho+[3]_\rho)
  \to \eta_2\times\eta_0
  \to L([0]_\rho+[1]_\rho+[2]_\rho+[3]_\rho)\to0,\\
0\to L([0]_\rho+[1,3]_\rho)
  \to \delta_2\times\eta_0
  \to L([0]_\rho+[1]_\rho+[2,3]_\rho)\to0,\\
0\to L([0,2]_\rho+[3]_\rho)
  \to \eta_2\times\delta_0
  \to L([0,1]_\rho+[2]_\rho+[3]_\rho)\to0,\\
0\to L([0,3]_\rho)
  \to \delta_2\times\delta_0
  \to L([0,1]_\rho+[2,3]_\rho)\to0
\end{gathered}
\]
come from Zelevinsky's length-two reducibility criterion.  Since \(N\) is
unipotent, the twisted Jacquet functor \((-)_{N,\psi}\) is exact.  The
\(\GL_4\) calculation for \(P_{2,2}\)-induced representations
\cite[Proposition 5.21]{HV26} gives
\[
\begin{aligned}
  (\eta_2\times\eta_0)_{N,\psi}&\simeq\eta_2\otimes\eta_0,\\
  (\delta_2\times\eta_0)_{N,\psi}&\simeq\delta_2\otimes\eta_0,\\
  (\eta_2\times\delta_0)_{N,\psi}&\simeq\eta_2\otimes\delta_0,
\end{aligned}
\]
and an exact sequence
\[
  0\to\delta_2\otimes\delta_0
  \to(\delta_2\times\delta_0)_{N,\psi}
  \to(\kappa\nu^{3/2})\times(\kappa\nu^{-3/2})\to0.
\]
 In
\cite[Proposition 5.21(2)]{HV26}, the last term is the normalized induction
to \(\GL_2(F)\) from its Borel subgroup of the two characters
\[
  \rho^2\nu^5\cdot\nu^{-1/2}=\kappa\nu^{3/2},
  \qquad
  \rho^2\nu\cdot\nu^{1/2}=\kappa\nu^{-3/2}.
\]
Together with the vanishing entries in the first table, these formulas give
the five displayed twisted Jacquet modules.
For $\rho=\nu^{-3/2}$, this is the principal series $\xi$ considered in
\cite[\S5.5]{HV26}.  The rows above correspond to the following notation in
that paper:
\[
\begin{array}{c|c}
\alpha & \text{notation in \cite[\S5.5]{HV26}}\\ \hline
(1,1,1,1) & \one_4\\
(2,1,1) & Q_{\nu,2}^{\vee}\\
(1,2,1) & L_{\nu^{-1},2}\\
(1,1,2) & Q_{\nu,2}\\
(2,2) & \tau\\
(3,1) & Z_{\nu,2}^{\vee}\\
(1,3) & Z_{\nu,2}\\
(4) & \operatorname{St}_4.
\end{array}
\]
Thus the vanishing and nonvanishing entries agree with the computations in
\cite[\S5.5]{HV26}.


\section{The case of \texorpdfstring{$\GL_6$}{GL6}}
\label{sec:GL6}

\subsection{A nonzero twisted Jacquet module}

\begin{proposition}
\label{prop:second-example}
Let $\rho$ be a smooth character of $F^\times$ and let
\[
  \mathfrak m_6
  =[0,0]_\rho+[0,0]_\rho+[1,1]_\rho+[2,3]_\rho+[3,3]_\rho,
  \qquad
  \pi_6=L(\mathfrak m_6),
\]
an irreducible representation of $\GL_6(F)$.
Then, as a representation of the stabilizer $\Delta\GL_3(F)$ of $\psi$,
\[
  (\pi_6)_{N,\psi}
  \simeq
  \rho^2\nu\times\rho^2\nu^4\times\rho^2\nu^4.
\]
\end{proposition}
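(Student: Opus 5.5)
The plan is to realize $\pi_6$ as the distinguished subquotient singled out in Corollary~\ref{cor:unique-nonzero-parabolic-subquotient}. I will take $n=3$, with a one-dimensional $\pi_1$ and an irreducible $\pi_2$ of $\GL_3(F)$. The natural choice is
\[
  \Gamma=[0,2]_\rho,\qquad \pi_1=Z(\Gamma),\qquad
  \mathfrak n=[0,0]_\rho+[3,3]_\rho+[3,3]_\rho,\qquad \pi_2=Z(\mathfrak n).
\]
By the formula in the proof of Theorem~\ref{thm:vanishing-parabolic-subquotient}, $\pi_1$ is the character $g\mapsto\rho(\det g)\,|\det g|_F^{(2\cdot 2-3+1)/2}=\rho(\det g)\,\nu(g)$. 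The segments of $\mathfrak n$ are pairwise unlinked: $[0]_\rho$ and $[3]_\rho$ are neither juxtaposed nor overlapping, and equal segments are never linked. Hence $\pi_2=\rho\times\nu^3\rho\times\nu^3\rho$ is irreducible, by the standard irreducibility criterion.

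The key step is to identify $\pi_6$ with $Z(\Gamma+\mathfrak n)$, that is, to show
\[
  \mathfrak m_6^{\mathrm t}=[0,2]_\rho+[0,0]_\rho+[3,3]_\rho+[3,3]_\rho .
\]
I will run the algorithm of Definition~\ref{def:MW-algorithm} on $\mathfrak m_6=[0]+[0]+[1]+[2,3]+[3]$.
\begin{enumerate}[label=\textup{(\arabic*)},leftmargin=2.2em]
  \item The largest right endpoint is $3$, and the maximal left endpoint among segments ending at $3$ gives $[3,3]$. No segment ends at $2$, so the pass records $[3]$ and deletes $[3,3]$.
  \item Next $[2,3]$ is chosen. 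Again nothing ends at $2$, so the pass records $[3]$ and truncates $[2,3]$ to $[2,2]$.
  \item From $[0]+[0]+[1]+[2]$ the chain $[2],[1],[0]$ is selected, recording $[0,2]$.
  \item The remaining $[0]$ records $[0]$.
\end{enumerate}
This gives the displayed dual, and hence $\pi_6=L(\mathfrak m_6)=Z(\Gamma+\mathfrak n)$.

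Corollary~\ref{cor:unique-nonzero-parabolic-subquotient} then says that $Z(\Gamma+\mathfrak n)$ is the unique irreducible subquotient of $\pi_1\times\pi_2$ with nonzero twisted Jacquet module. It also says this module equals all of $(\pi_1\times\pi_2)_{N,\psi}\simeq\pi_1\otimes\pi_2$, by exactness of $(-)_{N,\psi}$. As a representation of $\Delta\GL_3(F)$, this is the twist of $\rho\times\nu^3\rho\times\nu^3\rho$ by the character $\rho(\det)\nu$. Twisting a normalized principal series by a character of $\det$ twists each inducing character, so
\[
  (\pi_6)_{N,\psi}\simeq\rho^2\nu\times\rho^2\nu^4\times\rho^2\nu^4 .
\]

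The main obstacle I anticipate is the Zelevinsky-dual computation: the tie-breaking in the MW algorithm has to be carried out correctly. Choosing $[3,3]$ rather than $[2,3]$ first matters for the bookkeeping, although either order happens to give the same dual here. It is also worth noting, as a consistency check with Theorem~\ref{thm:corrected}, that $r_{\MW}(\mathfrak m_6)=3\leq n$, so nonvanishing is expected.
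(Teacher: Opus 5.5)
Your proof is correct and follows the paper's approach. Both arguments compute $\mathfrak m_6^{\mathrm t}=[0,2]_\rho+[0,0]_\rho+[3,3]_\rho+[3,3]_\rho$ and read off $(\pi_6)_{N,\psi}$ from the $P_{3,3}$-induction of $Z([0,2]_\rho)$ and $\rho\times\nu^3\rho\times\nu^3\rho$. The paper places the character in the second block, cites Proposition~5.2 of \cite{HV26}, and kills the other constituent $Z([0,3]_\rho+[0,0]_\rho+[3,3]_\rho)$ by hand; your ordering fits Corollary~\ref{cor:unique-nonzero-parabolic-subquotient}, which does both steps at once.

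Your closing aside that either tie-break gives the same dual is false. If the first pass starts with $[2,3]_\rho$, it records $[3,3]_\rho$ and leaves $[0,0]_\rho+[0,0]_\rho+[1,1]_\rho+[2,2]_\rho+[3,3]_\rho$, whose next pass records $[0,3]_\rho$. The output $[0,3]_\rho+[0,0]_\rho+[3,3]_\rho$ would wrongly predict $(\pi_6)_{N,\psi}=0$, so the maximal-left-endpoint rule you actually applied is essential.
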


\begin{proof}
The M\oe glin--Waldspurger algorithm gives
\[
  \mathfrak m_6^{\mathrm t}
  =[0,2]_\rho+[0,0]_\rho+[3,3]_\rho+[3,3]_\rho.
\]
 Hence
\[
  r_{\MW}(\mathfrak m_6)=3.
\]
Since $n=3$, Theorem \ref{thm:corrected} gives
$(\pi_6)_{N,\psi}\neq0$.

We now determine the $\Delta\GL_3(F)$-module.  Since
$\pi_6=L(\mathfrak m_6)=Z(\mathfrak m_6^{\mathrm t})$, it is the unique
irreducible submodule of
\[
  \Theta
  =
  \nu^3\rho\times\nu^3\rho\times\rho\times Z([0,2]_\rho).
\]
Here $Z([0,2]_\rho)$ is a character of $\GL_3(F)$.  Applying
\cite[Proposition 5.2]{HV26} to this \(P_{3,3}\)-induced representation gives
\[
  \Theta_{N,\psi}
  \simeq
  \bigl(\nu^3\rho\times\nu^3\rho\times\rho\bigr)\otimes Z([0,2]_\rho)
  \simeq
  Z([0,2]_\rho)\otimes
  \bigl(\rho\times\nu^3\rho\times\nu^3\rho\bigr).
\]

It remains to see that the quotient $\Theta/\pi_6$ contributes nothing.  In
Zelevinsky's description of composition factors by elementary operations
\cite[Theorem 7.1]{Zel80}, every composition factor of \(\Theta\) other than
\(\pi_6\) is obtained from \(\mathfrak m_6^{\mathrm t}\) by applying at least
one elementary operation to one of the linked pairs
\([0,2]_\rho\), \([3,3]_\rho\).  It follows that the Zelevinsky
multisegment of every irreducible constituent of $\Theta/\pi_6$ contains a
segment of length $4$.  For such a factor $\tau=Z(\mathfrak n)$, equivalently
$\tau=L(\mathfrak n^{\mathrm t})$, Theorem~\ref{thm:corrected} gives
$\tau_{N,\psi}=0$, since here $n=3$.  Exactness of twisted Jacquet functors therefore identifies
$(\pi_6)_{N,\psi}$ with $\Theta_{N,\psi}$, proving the stated formula.
\end{proof}

\subsection{Parabolic indexing and Richardson orbits}

For $m=6$, the following table records the ordered partition \(\alpha\)
indexing \(\pi_\alpha=L(\mathfrak p_\alpha)\), the ordered partition
\(\beta=\alpha^\complement\) such that
\(v_{P_\beta}^G\simeq \pi_{\alpha}\), the partition
\(\lambda_\alpha=(\beta^\downarrow)^\top\) attached to the Richardson orbit,
and the corresponding twisted Jacquet module.

\begin{center}
\small
\begin{tabular}{c|c|c|c}
$\alpha$ & $\beta=\alpha^\complement$
& $\lambda_\alpha$ & $(\pi_\alpha)_{N,\psi}$\\ \hline
$(1,1,1,1,1,1)$ & $(6)$ & $(1,1,1,1,1,1)$ & \(0\)\\
$(1,1,1,1,2)$ & $(5,1)$ & $(2,1,1,1,1)$ & \(0\)\\
$(1,1,1,2,1)$ & $(4,2)$ & $(2,2,1,1)$ & \(0\)\\
$(1,1,1,3)$ & $(4,1,1)$ & $(3,1,1,1)$ & \(0\)\\
$(1,1,2,1,1)$ & $(3,3)$ & $(2,2,2)$ & \(\neq0\)\\
$(1,1,2,2)$ & $(3,2,1)$ & $(3,2,1)$ & \(\neq0\)\\
$(1,1,3,1)$ & $(3,1,2)$ & $(3,2,1)$ & \(\neq0\)\\
$(1,1,4)$ & $(3,1,1,1)$ & $(4,1,1)$ & \(\neq0\)\\
$(1,2,1,1,1)$ & $(2,4)$ & $(2,2,1,1)$ & \(0\)\\
$(1,2,1,2)$ & $(2,3,1)$ & $(3,2,1)$ & \(\neq0\)\\
$(1,2,2,1)$ & $(2,2,2)$ & $(3,3)$ & \(\neq0\)\\
$(1,2,3)$ & $(2,2,1,1)$ & $(4,2)$ & \(\neq0\)\\
$(1,3,1,1)$ & $(2,1,3)$ & $(3,2,1)$ & \(\neq0\)\\
$(1,3,2)$ & $(2,1,2,1)$ & $(4,2)$ & \(\neq0\)\\
$(1,4,1)$ & $(2,1,1,2)$ & $(4,2)$ & \(\neq0\)\\
$(1,5)$ & $(2,1,1,1,1)$ & $(5,1)$ & \(\neq0\)\\
$(2,1,1,1,1)$ & $(1,5)$ & $(2,1,1,1,1)$ & \(0\)\\
$(2,1,1,2)$ & $(1,4,1)$ & $(3,1,1,1)$ & \(0\)\\
$(2,1,2,1)$ & $(1,3,2)$ & $(3,2,1)$ & \(\neq0\)\\
$(2,1,3)$ & $(1,3,1,1)$ & $(4,1,1)$ & \(\neq0\)\\
$(2,2,1,1)$ & $(1,2,3)$ & $(3,2,1)$ & \(\neq0\)\\
$(2,2,2)$ & $(1,2,2,1)$ & $(4,2)$ & \(\neq0\)\\
$(2,3,1)$ & $(1,2,1,2)$ & $(4,2)$ & \(\neq0\)\\
$(2,4)$ & $(1,2,1,1,1)$ & $(5,1)$ & \(\neq0\)\\
$(3,1,1,1)$ & $(1,1,4)$ & $(3,1,1,1)$ & \(0\)\\
$(3,1,2)$ & $(1,1,3,1)$ & $(4,1,1)$ & \(\neq0\)\\
$(3,2,1)$ & $(1,1,2,2)$ & $(4,2)$ & \(\neq0\)\\
$(3,3)$ & $(1,1,2,1,1)$ & $(5,1)$ & \(\neq0\)\\
$(4,1,1)$ & $(1,1,1,3)$ & $(4,1,1)$ & \(\neq0\)\\
$(4,2)$ & $(1,1,1,2,1)$ & $(5,1)$ & \(\neq0\)\\
$(5,1)$ & $(1,1,1,1,2)$ & $(5,1)$ & \(\neq0\)\\
$(6)$ & $(1,1,1,1,1,1)$ & $(6)$ & \(\neq0\)\\
\end{tabular}
\end{center}

The last column follows from Corollary~\ref{cor:trivial-principal-series-tjm}
and Proposition~\ref{prop:diagonal-complement}: since \(n=3\), the twisted
Jacquet module vanishes exactly when \(\max_i\beta_i\geq4\).  
Here the wave-front set of $v_{P_\beta}^G$ is the analytic closure of the
Richardson orbit.  Parabolics
with the same unordered Levi sizes have the same partition attached to the
wave-front set, even when the corresponding representations are different.

\subsection{The three failures of the pole equivalence}

\begin{proposition}[The three failures in \(\GL_6\)]
\label{prop:GL6-three-failures}
Among the \(32\) constituents \(\pi_\alpha\) of \(\Sigma_6(\rho)\), exactly
three violate the proposed equivalence in Conjecture~\ref{conj:prasad}.  They
are
\[
\begin{array}{c|c|c|c}
\alpha & (c_\alpha(1),c_\alpha(2),c_\alpha(3))
& \max_t h_\alpha(t) & (\pi_\alpha)_{N,\psi}\\ \hline
(1,1,2,1,1) & (4,2,1) & 3 & \neq0\\
(1,1,2,2) & (3,2,1) & 3 & \neq0\\
(2,2,1,1) & (3,2,1) & 3 & \neq0.
\end{array}
\]
Thus all three failures are failures of the pole-to-vanishing implication.
\end{proposition}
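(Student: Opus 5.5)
The proof is a finite check over all $32$ ordered partitions $\alpha$ of $6$, with $n=3$. Both sides of the proposed equivalence are computed by earlier results. By Corollary~\ref{cor:trivial-principal-series-tjm} and Proposition~\ref{prop:diagonal-complement},
\[
(\pi_\alpha)_{N,\psi}=0 \iff \max_i\beta_i\geq4,\qquad \beta=\alpha^\complement;
\]
this is the last column of the $\GL_6$ table. By Proposition~\ref{prop:principal-series-poles}, the pole side is $c_\alpha(t)\geq 4-t$ for $t=1,2,3$, that is, $c_\alpha(1)\geq3$, $c_\alpha(2)\geq2$ and $c_\alpha(3)\geq1$.

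First I dispose of the vanishing rows. By Theorem~\ref{thm:general-vanishing-to-poles}, every $\alpha$ with $(\pi_\alpha)_{N,\psi}=0$ satisfies all three pole inequalities. So none of the nine vanishing rows of the table, namely $(1^6)$, $(1,1,1,1,2)$, $(1,1,1,2,1)$, $(1,1,1,3)$, $(1,2,1,1,1)$, $(2,1,1,1,1)$, $(2,1,1,2)$ and $(3,1,1,1)$, among those with $\max\beta_i\geq4$, can violate the conjecture. This leaves only the nonvanishing rows, and it already shows that any failure is a failure of the pole-to-vanishing direction.

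For each of the $23$ nonvanishing $\alpha$, I compute $(c_\alpha(1),c_\alpha(2),c_\alpha(3))$ and decide whether all three inequalities hold. A pair of rows $j<q$ contributes poles exactly at the $t$ with
\[
s_{q-1}-s_j+1\;\leq\; t\;\leq\; s_{q-1}-s_j+\min(\alpha_j,\alpha_q).
\]
The quantity $s_{q-1}-s_j$ is the total size of the blocks strictly between rows $j$ and $q$, so only nearby pairs contribute to $t\leq3$. Two quick pruning rules cut down the work:
\begin{enumerate}[label=\textup{(\roman*)}]
\item If $\alpha$ has at most two parts, or has a part $\geq4$, then $c_\alpha(1)\leq2$ or $c_\alpha(3)=0$. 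For example, for $(3,3)$ the only pair gives a pole at $t=1$ alone.
\item If $\alpha$ has three parts, then $c_\alpha(1)\leq 2$, because only adjacent pairs contribute at $t=1$.
\end{enumerate}
Hence only partitions with at least four parts and $\max\beta_i\leq3$ need real checking. These are $(1,1,2,1,1)$, $(1,1,2,2)$, $(1,2,1,2)$, $(2,1,2,1)$, $(2,2,1,1)$, $(1,2,2,1)$, $(1,1,3,1)$ and $(1,3,1,1)$.

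For these eight I count pairs directly. For instance, $(1,1,2,1,1)$ gives $(4,2,1)$, while $(1,2,1,2)$ gives $c(3)=0$: the pair $(1,4)$ has $s_3-s_1=3$, hence poles only at $t\geq4$, and the pair $(2,4)$ contributes at $t=2,3$? I must check this carefully. I expect exactly $(1,1,2,1,1)$, $(1,1,2,2)$ and $(2,2,1,1)$ to satisfy all inequalities, the last two being mirror images of each other, and the rest to fail at $t=1$ or $t=3$.

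The main obstacle is purely bookkeeping: getting the pole-range formula right for pairs separated by a block of size $2$. I would double-check each of the borderline cases by recomputing from the Langlands parameter in Proposition~\ref{prop:parameter-principal-series} via Clebsch--Gordan.
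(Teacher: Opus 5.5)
Your overall strategy is the paper's own: a finite check over the $32$ ordered partitions, with Theorem~\ref{thm:general-vanishing-to-poles} ruling out any failure among the vanishing rows. Your reduction to the eight candidates $(1,1,2,1,1)$, $(1,1,2,2)$, $(2,2,1,1)$, $(1,1,3,1)$, $(1,3,1,1)$, $(1,2,1,2)$, $(2,1,2,1)$, $(1,2,2,1)$ is also correct. A cleaner route to it: a pair $(j,q)$ contributes poles exactly at $t=g+1,\dots,g+\min(\alpha_j,\alpha_q)$ with $g=s_{q-1}-s_j$. Hence $c_\alpha(1)=k-1$, and the $t=1$ inequality just says ``at least four parts.'' Combined with ``$B(\alpha)$ has no three consecutive integers'' (nonvanishing), this gives exactly the eight.

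The gap is that the decisive step, the check of these eight, is never carried out, and what you do assert about it is wrong. You claim $c_{(1,2,1,2)}(3)=0$. In fact the pair $(1,3)$ ($g=2$, $\min=1$) and the pair $(2,4)$ ($g=1$, $\min=2$) both give poles at $t=3$, so $c_{(1,2,1,2)}=(3,1,2)$. Your expectation that the non-failures ``fail at $t=1$ or $t=3$'' also cannot be right. All eight candidates have at least four parts, so $c_\alpha(1)\ge3$. Moreover $(1,2,1,2)$, $(2,1,2,1)$ and $(1,2,2,1)$ all have $c_\alpha(3)=2$. So your stated plan would fail to exclude these three.

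The inequality that actually separates the cases is $t=2$. From the pair formula,
\[
c_\alpha(2)=\#\{1\le j\le k-1:\min(\alpha_j,\alpha_{j+1})\ge2\}+\#\{2\le i\le k-1:\alpha_i=1\}.
\]
This equals $2$ for $(1,1,2,1,1)$, $(1,1,2,2)$, $(2,2,1,1)$ and $1$ for the other five candidates. Checking $c_\alpha(3)=1$ for the three survivors then finishes the proof.

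There are also minor slips. There are eight vanishing rows, not nine (you list eight), hence $24$ nonvanishing rows, not $23$. For $(3,3)$ the single pair gives poles at $t=1,2,3$, not at $t=1$ alone; this is harmless, since $c_{(3,3)}(1)=1$.
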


\begin{proof}
Using Proposition~\ref{prop:principal-series-poles} and Corollary
\ref{cor:trivial-principal-series-tjm}, one checks the \(32\) ordered partitions of
\(6\) by computing \((c_\alpha(1),c_\alpha(2),c_\alpha(3))\) and
\(\max_t h_\alpha(t)\).  The displayed rows are exactly the rows for which the
pole inequalities hold while the exact criterion gives nonvanishing.  Indeed,
in each displayed row
$(c_\alpha(1),c_\alpha(2),c_\alpha(3))\geq(3,2,1)$, whereas
\(\max_t h_\alpha(t)=3<n+1=4\).
Hence $29$ of the $32$ constituents satisfy the full equivalence, and each of
the three exceptions has the pole inequalities but a nonzero twisted Jacquet
module.  There are no violations in the opposite
direction, in agreement with Theorem~\ref{thm:general-vanishing-to-poles}.
\end{proof}

The rarity of the failures is worth emphasizing.  The pole profile detects
vanishing correctly for most constituents in this small rank, but it is
strictly coarser than the Zelevinsky dual invariant.  The three rows
above show exactly where the extra information in the wave-front set is lost.



\section{Vanishing criterion from \texorpdfstring{$L$}{L}-functions}
\label{sec:exterior-square}

This section considers two different uses of \(L\)-functions.  Exterior-square
factors detect possibly twisted Shalika functionals and therefore give a
sufficient condition for \(\pi_{N,\psi}\neq0\), while the segment-length ratio
below gives an exact vanishing criterion, which is a reformulation of Theorem \ref{thm:corrected}.

\subsection{Exterior-square \texorpdfstring{$L$}{L}-factors}

The standard exterior-square criterion is a criterion for Shalika functionals.
Let
\[
  S_{2n}=\Delta\GL_n(F)\ltimes N.
\]
For a character \(\eta\) of \(F^\times\), extend \(\psi\) to \(S_{2n}\) by
\[
  \psi_\eta(\Delta(g)n(X))=\eta(\det g)\psi(n(X)).
\]
Then
\[
  \Hom_{S_{2n}}(\pi,\psi_\eta)
  \simeq
  \Hom_{\Delta\GL_n(F)}
  \bigl(\pi_{N,\psi},\eta\circ\det\bigr).
\]
Thus a twisted Shalika model is a character quotient of the
\(\Delta\GL_n(F)\)-module \(\pi_{N,\psi}\).  It implies
\(\pi_{N,\psi}\neq0\), but it is stronger than nonvanishing.

This is the representation-theoretic meaning of the exterior-square factor:
the condition that \(\wedge^2\phi_\pi\) contain \(\eta\) is detected by a pole
of \(L(s,\pi,\wedge^2\otimes\eta^{-1})\).  For square-integrable
representations of \(\GL_{2m}(F)\), this is the Jacquet--Shalika criterion for
the Shalika model; see \cite[Proposition 6.1]{Mat12}.  For generalized
Steinberg representations, the corresponding criterion is recorded in
\cite[Theorem 6.1]{Mat12}.  We use the agreement of the Jacquet--Shalika and
Langlands-parameter exterior-square factors as in Kewat--Raghunathan
\cite{KR12}.

For the principal-series constituents containing the trivial representation,
the examples above show that this cannot be promoted to a criterion for
\(\pi_{N,\psi}\).  In \(\GL_4\), the two constituents indexed by
\(\alpha=(2,1,1)\) and \(\alpha=(2,2)\) have the same full exterior-square
pole divisor, even after allowing unramified character twists:
\[
  \pord_{s=t}L(s,\pi_\alpha,\wedge^2)=1
  \quad(t=-2,-1,0,2),
\]
and no other poles.  But
\[
  (\pi_{(2,1,1)})_{N,\psi}=0,
  \qquad
  (\pi_{(2,2)})_{N,\psi}\neq0.
\]

\subsection{The segment-length \texorpdfstring{\(L\)}{L}-ratio criterion}

An exact \(L\)-function criterion must detect the lengths of the segments in the
Zelevinsky dual multisegment.  For a cuspidal line represented by \(\rho\) and
an integer \(r\geq1\), set
\[
  \tau_{\rho,r}:=\delta([0,r-1]_\rho).
\]
If \(\pi^{\mathrm t}\) is the Zelevinsky dual of \(\pi\), define, for
\(r\geq2\),
\begin{equation}
\label{eq:segment-length-ratio}
  \mathcal R_{\rho,r}(s,\pi)
  :=
  \frac{L(s,\pi^{\mathrm t}\times\tau_{\rho,r}^{\vee})}
       {L(s,\pi^{\mathrm t}\times\tau_{\rho,r-1}^{\vee})}.
\end{equation}

\begin{proposition}
\label{prop:segment-length-ratio-factorization}
Let \(\pi=L(\mathfrak m)\) be irreducible.  Write the part of
\(\mathfrak m^{\mathrm t}\) on the cuspidal line represented by \(\rho\) as
\[
  (\mathfrak m^{\mathrm t})_{[\rho]}=\sum_i [a_i,b_i]_\rho,
  \qquad \ell_i=b_i-a_i+1.
\]
Then
\begin{equation}
\label{eq:segment-length-ratio-product}
  \mathcal R_{\rho,r}(s,\pi)
  =
  \prod_{i:\,\ell_i\geq r}
  L(s+b_i-r+1,\rho\times\rho^\vee).
\end{equation}
In particular, \(\mathcal R_{\rho,r}(s,\pi)\equiv1\) if and only if every
segment of \(\mathfrak m^{\mathrm t}\) on the cuspidal line \([\rho]\) has
length \(<r\).
\end{proposition}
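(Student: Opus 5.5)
The plan is to compute both Rankin--Selberg factors in \eqref{eq:segment-length-ratio} segment by segment and let the quotient telescope. Write \(\pi^{\mathrm t}=L(\mathfrak m^{\mathrm t})\). Rankin--Selberg \(L\)-factors of \(L(\mathfrak n)\) are those of the Langlands parameter, hence multiplicative in the segments of \(\mathfrak n\), as already used in \eqref{eq:self-rs-multisegment}. Applying this to \(\mathfrak n=\mathfrak m^{\mathrm t}\) gives
\[
  L(s,\pi^{\mathrm t}\times\tau_{\rho,r}^{\vee})
  =\prod_{\Gamma\in\mathfrak m^{\mathrm t}}
  L\bigl(s,\delta(\Gamma)\times\delta([0,r-1]_\rho)^\vee\bigr),
\]
and likewise with \(r-1\) in place of \(r\). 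Thus the ratio is a product over \(\Gamma\in\mathfrak m^{\mathrm t}\) of single-segment ratios, and it suffices to compute one of these.

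Next I apply \eqref{eq:RS} with \(\Delta=\Gamma=[a_i,b_i]_{\rho_i}\) and \(\Delta'=[0,r-1]_\rho\), so that \(a'=0\). This gives
\[
  L\bigl(s,\delta(\Gamma)\times\tau_{\rho,r}^\vee\bigr)
  =\prod_{k=0}^{\min(\ell_i,r)-1}L(s+b_i-k,\rho_i\times\rho^\vee).
\]
The factor for \(r-1\) is the same product with the upper index \(\min(\ell_i,r-1)-1\). The two index ranges coincide when \(\ell_i\le r-1\). When \(\ell_i\ge r\), the \(r\)-range has exactly one extra index, \(k=r-1\). Hence the single-segment ratio is \(1\) if \(\ell_i<r\), and it is \(L(s+b_i-r+1,\rho_i\times\rho^\vee)\) if \(\ell_i\ge r\).

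Then I discard the segments off the line \([\rho]\). This is the step that needs care, because it depends on what "different cuspidal lines" means. If \(\rho_i\) is not isomorphic to any unramified twist \(\rho\nu^{z}\), then \(L(s,\rho_i\times\rho^\vee)\equiv1\) and those segments drop out. With the convention fixed at the start of Section~\ref{sec:L-functions} (one base cuspidal per line, integral powers of \(\nu\) absorbed into endpoints), I would state that the lines are taken modulo all unramified twists, or equivalently normalize \(\rho_i=\rho\) whenever \(\rho_i\simeq\rho\nu^z\), by absorbing the possibly non-integral exponent \(z\) into the endpoints. The segments on \([\rho]\) then contribute precisely the factors in \eqref{eq:segment-length-ratio-product}, which proves the formula.

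For the final assertion, note that each factor \(L(s+b_i-r+1,\rho\times\rho^\vee)\) has a pole at \(s=r-1-b_i\), since \(L(s,\rho\times\rho^\vee)\) has a simple pole at \(s=0\). Local \(L\)-factors have no zeros, so these poles cannot cancel. Hence the product is identically \(1\) if and only if it is empty, that is, if and only if no segment of \(\mathfrak m^{\mathrm t}\) on \([\rho]\) has length \(\ge r\). I expect the main obstacle to be stating the cuspidal-line convention precisely enough that off-line segments genuinely contribute trivially; everything else is the one-index telescoping above.
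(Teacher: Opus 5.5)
Your proposal is correct and is essentially the paper's proof: multiplicativity over the segments of $\mathfrak m^{\mathrm t}$, then Proposition~\ref{prop:RS} with $a'=0$, so that passing from $r-1$ to $r$ adds exactly the $k=r-1$ factor $L(s+b_i-r+1,\rho\times\rho^\vee)$ when $\ell_i\geq r$. Your extra care about the cuspidal-line convention is warranted: the paper's remark that the other lines do not contribute holds literally only when $\rho_i$ is not an unramified twist of $\rho$ (a segment on the line of $\rho\nu^{1/2}$ would contribute $L(s+b_i-r+\tfrac32,\rho\times\rho^\vee)$), so reading lines modulo all unramified twists, as you propose, is what makes the displayed formula and its ``in particular'' exactly right.
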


\begin{proof}
Apply Proposition~\ref{prop:RS} to a segment \([a_i,b_i]_\rho\) on the
cuspidal line \([\rho]\) and to \([0,r-1]_\rho\).  Passing from \(r-1\) to
\(r\) adds exactly one factor, namely
\(L(s+b_i-r+1,\rho\times\rho^\vee)\), and this happens precisely when
\(\ell_i\geq r\).  Multiplying over all segments on \([\rho]\) gives
\eqref{eq:segment-length-ratio-product}; the remaining cuspidal lines do not
contribute to the Rankin--Selberg factor with \(\tau_{\rho,r}\).
\end{proof}

\begin{theorem}
\label{thm:segment-length-ratio-criterion}
Let \(\pi\) be an irreducible admissible representation of \(\GL_{2n}(F)\).
Then
\[
  \pi_{N,\psi}=0
  \quad\Longleftrightarrow\quad
  \mathcal R_{\rho,n+1}(s,\pi)\not\equiv1
  \text{ for some cuspidal line }[\rho].
\]
Equivalently, the quotient
\[
  \frac{L(s,\pi^{\mathrm t}\times\tau_{\rho,n+1}^{\vee})}
       {L(s,\pi^{\mathrm t}\times\tau_{\rho,n}^{\vee})}
\]
has a pole for some \(\rho\).
\end{theorem}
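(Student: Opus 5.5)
The plan is to reduce the statement directly to Theorem~\ref{thm:corrected} by way of Proposition~\ref{prop:segment-length-ratio-factorization}. Here \(\pi^{\mathrm t}\) is the Zelevinsky dual, so \(\pi^{\mathrm t}=L(\mathfrak m^{\mathrm t})\) when \(\pi=L(\mathfrak m)\). By Theorem~\ref{thm:corrected}, \(\pi_{N,\psi}=0\) if and only if some segment of \(\mathfrak m^{\mathrm t}\), lying on some cuspidal line \([\rho]\), has length at least \(n+1\). Applying Proposition~\ref{prop:segment-length-ratio-factorization} with \(r=n+1\) on that line, this holds if and only if \(\mathcal R_{\rho,n+1}(s,\pi)\not\equiv1\). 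The first formulation of the theorem then follows by simply chaining these equivalences.

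For the ``equivalently'' clause I would show that, on a given line \([\rho]\), the ratio \(\mathcal R_{\rho,n+1}(s,\pi)\) is not identically \(1\) exactly when it has a pole.

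\begin{itemize}
\item By \eqref{eq:segment-length-ratio-product}, the ratio is a finite product of factors \(L(s+c,\rho\times\rho^\vee)\) with \(c\in\mathbb Z\).
\item Each such factor has the form \(P(q^{-s})^{-1}\) for a nonconstant polynomial \(P\) with \(P(0)=1\); indeed \(L(s,\rho\times\rho^\vee)\) has a pole at \(s=0\), as already used in Proposition~\ref{prop:RS}.
\item Hence a nonempty product has poles, while an empty product equals \(1\) and has none.
\end{itemize}

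Since a rational function of \(q^{-s}\) with no zeros in the numerator cannot cancel its own denominator, no pole can disappear.

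The one point that needs care is that the ratio in \eqref{eq:segment-length-ratio} is a genuine quotient, so in principle poles of the denominator could cancel poles of the numerator. Proposition~\ref{prop:segment-length-ratio-factorization} removes this worry, because it says the quotient equals an honest product of \(L\)-factors. I would also note that Proposition~\ref{prop:segment-length-ratio-factorization} should be applied to \(\pi^{\mathrm t}=L(\mathfrak m^{\mathrm t})\). Its Rankin--Selberg factor against \(\tau_{\rho,r}^\vee\) is computed through multiplicativity \eqref{eq:self-rs-multisegment}, using the segments of \(\mathfrak m^{\mathrm t}\), since the Rankin--Selberg factor depends only on the Langlands parameter. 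Beyond this the argument is formal, and I expect no real obstacle.
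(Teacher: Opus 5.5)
Your proposal is correct and follows the paper's proof: apply Proposition~\ref{prop:segment-length-ratio-factorization} with $r=n+1$ on each cuspidal line, then invoke Theorem~\ref{thm:corrected}. Your extra check that the ratio is $\not\equiv1$ exactly when it has a pole only makes explicit a step the paper leaves implicit for the ``equivalently'' clause: the ratio is a product of shifted factors $L(s+c,\rho\times\rho^\vee)$, each the inverse of a nonconstant polynomial in $q^{-s}$, so it has a pole as soon as the product is nonempty.
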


\begin{proof}
By Proposition~\ref{prop:segment-length-ratio-factorization}, for a fixed
cuspidal line \([\rho]\) the quotient is nontrivial exactly when
\(\mathfrak m^{\mathrm t}\) contains a segment of length at least \(n+1\) on
\([\rho]\).  Taking all cuspidal lines together, this is
Theorem~\ref{thm:corrected}.
\end{proof}

Now fix \(m\), a smooth character \(\rho\) of \(F^\times\), and an ordered partition
\(\alpha\) of \(m\).  Let \(\pi_\alpha=L(\mathfrak p_\alpha)\) be the
corresponding constituent of the principal series \(\Sigma_m(\rho)\) from
\eqref{eq:sigma-m-rho}--\eqref{eq:pi-alpha}.  For these representations the
criterion becomes particularly transparent.  Let
\(\beta=\alpha^\complement\).  By Proposition~\ref{prop:diagonal-complement},
the segment lengths in \(\mathfrak p_\alpha^{\mathrm t}\) are exactly the parts
of \(\beta\).  Therefore Proposition~\ref{prop:segment-length-ratio-factorization}
gives, for \(r\geq2\),
\[
  \mathcal R_{\rho,r}(s,\pi_\alpha)\not\equiv1
  \quad\Longleftrightarrow\quad
  \max_i\beta_i\geq r.
\]
 Hence, when \(m=2n\),
\[
  (\pi_\alpha)_{N,\psi}=0
  \quad\Longleftrightarrow\quad
  \mathcal R_{\rho,n+1}(s,\pi_\alpha)\not\equiv1
  \quad\Longleftrightarrow\quad
  \max_i\beta_i\geq n+1.
\]
This is the same condition as \(\max_t h_\alpha(t)\geq n+1\), by
Proposition~\ref{prop:diagonal-complement}.

For the constituent \(\pi_\alpha=L(\mathfrak p_\alpha)\) of
\(\Sigma_m(\rho)\) in \eqref{eq:sigma-m-rho}, one can also formulate a criterion using
\(\pi_\alpha\) itself rather than its Zelevinsky dual.  Let
\[
  0=s_0<s_1<\cdots<s_k=m
\]
be obtained from \(B(\alpha)\) by adjoining \(0\) and \(m\).
Define the reduced standard factor
\[
  L_m^\circ(s,\pi_\alpha;\rho)
  :=
  \frac{L(s,\pi_\alpha\times\rho^\vee)}
       {L(s+m-1,\one)}.
\]
For \(r\geq2\), put
\[
  \mathcal P_{\rho,r}(s,\pi_\alpha)
  :=
  \prod_{i=0}^{r-2}L_m^\circ(s+i,\pi_\alpha;\rho).
\]
When \(m=2n\), we also write
\[
  L^\circ(s,\pi_\alpha;\rho):=L_{2n}^\circ(s,\pi_\alpha;\rho)
  =
  \frac{L(s,\pi_\alpha\times\rho^\vee)}
       {L(s+2n-1,\one)}.
\]

\begin{proposition}
\label{prop:reduced-standard-factor-criterion}
Let \(\beta=\alpha^\complement\).  For \(r\geq2\), the following are
equivalent:
\begin{itemize}
\item[(1)]\[
  \max_i\beta_i\geq r;
\]
\item[(2)]\[
  L_m^\circ(s,\pi_\alpha;\rho)
  \text{ contains a string of }r-1\text{ consecutive integral poles;}
\]
\item[(3)]
\[
  \mathcal P_{\rho,r}(s,\pi_\alpha)
  \text{ has a pole of order }r-1\text{ at some integer }s.
\]
\end{itemize}
In particular, when \(m=2n\),
\[
  (\pi_\alpha)_{N,\psi}=0
  \quad\Longleftrightarrow\quad
  \prod_{i=0}^{n-1}L^\circ(s+i,\pi_\alpha;\rho)
  \text{ has a pole of order }n
  \text{ at some integer }s.
\]
\end{proposition}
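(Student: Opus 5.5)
The plan is to compute $L(s,\pi_\alpha\times\rho^\vee)$ explicitly from the Langlands parameter in Proposition~\ref{prop:parameter-principal-series}, and then read off the pole set of $L_m^\circ$ in terms of the break set $B(\alpha)$. After that, all three conditions become statements about runs of consecutive integers in $B(\alpha)$, which are gaps in $B(\beta)$.

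\emph{Step 1: the pole set of $L_m^\circ$.} Apply Proposition~\ref{prop:RS} with $\Delta=[s_{j-1},s_j-1]_\rho$ and $\Delta'=[0,0]_\rho$, so that $\delta(\Delta')=\rho$. Since $\min(\len,\len')=1$, only $k=0$ survives, giving
\[
  L\bigl(s,\delta([s_{j-1},s_j-1]_\rho)\times\rho^\vee\bigr)=L(s+s_j-1,\rho\times\rho^\vee)=L(s+s_j-1,\one).
\]
By multiplicativity,
\[
  L(s,\pi_\alpha\times\rho^\vee)=\prod_{j=1}^kL(s+s_j-1,\one).
\]
The factor with $j=k$ is $L(s+m-1,\one)$, which is exactly the denominator. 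Hence
\[
  L_m^\circ(s,\pi_\alpha;\rho)=\prod_{b\in B(\alpha)}L(s+b-1,\one).
\]
Its real (equivalently, integral) poles are simple and located exactly at $s=1-b$ for $b\in B(\alpha)$. Simplicity holds because the $s_j$ are distinct and $L(s,\one)$ has a simple pole at $0$. I would restrict attention to integral poles throughout, to sidestep the imaginary translates of $L(s,\one)$.

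\emph{Step 2: $(1)\Leftrightarrow(2)$.} A string of $r-1$ consecutive integral poles of $L_m^\circ$ is the same thing as $r-1$ consecutive integers $c,c+1,\dots,c+r-2$ all lying in $B(\alpha)$. By \eqref{eq:complementary-ordered-partition}, this holds iff none of them lies in $B(\beta)$, since all of them lie in $\{1,\dots,m-1\}$. Write $0=s'_0<\cdots<s'_p=m$ for the points obtained by adjoining $0$ and $m$ to $B(\beta)$. A part $\beta_i=s'_i-s'_{i-1}\geq r$ is equivalent to the $r-1$ integers strictly between $s'_{i-1}$ and $s'_i$ (taking the first $r-1$ of them) avoiding $B(\beta)$. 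Conversely, a run of $r-1$ integers in $\{1,\dots,m-1\}$ avoiding $B(\beta)$ lies strictly inside a single gap, which then has length $\geq r$. This gives the equivalence.

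\emph{Step 3: $(2)\Leftrightarrow(3)$.} Because every integral pole of $L_m^\circ$ is simple, the order of the pole of $\mathcal P_{\rho,r}$ at an integer $s_0$ equals
\[
  \#\{0\leq i\leq r-2:\ s_0+i \text{ is a pole of } L_m^\circ\}\leq r-1,
\]
again using that local factors have no zeros. Equality holds exactly when $s_0,\dots,s_0+r-2$ are all poles, which is a string of length $r-1$.

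\emph{Step 4: the case $m=2n$.} Take $r=n+1$. Then Corollary~\ref{cor:trivial-principal-series-tjm} and Proposition~\ref{prop:diagonal-complement} identify $(\pi_\alpha)_{N,\psi}=0$ with $\max_i\beta_i\geq n+1$, which is condition $(1)$, and condition $(3)$ becomes the displayed product criterion. The main care needed is Step 1, namely making sure the reduced factor removes exactly the $j=k$ term and that no other coincidences of poles occur. Both points follow from the distinctness of the $s_j$.
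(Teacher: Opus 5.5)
Your proposal is correct and follows the paper's proof essentially step for step. Both arguments use the same four ingredients:
\begin{enumerate}
\item the factorization $L(s,\pi_\alpha\times\rho^\vee)=\prod_{j}L(s+s_j-1,\one)$ obtained from Proposition~\ref{prop:RS}, with the $j=k$ factor cancelled by the denominator of $L_m^\circ$;
\item the identification of the poles of $L_m^\circ$ with $\{1-b:b\in B(\alpha)\}$;
\item the translation between parts of $\beta$ of size at least $r$ and runs of $r-1$ consecutive integers in $B(\alpha)$;
\item simplicity of these poles, which gives (2)$\Leftrightarrow$(3).
\end{enumerate}
The only difference is that you explicitly restrict to integral poles, which avoids the imaginary translates of the poles of $L(s,\one)$. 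This is a small precision that the paper leaves implicit.
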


\begin{proof}
By multiplicativity of local \(L\)-factors and Proposition~\ref{prop:RS},
applied to each segment
\([s_{j-1},s_j-1]_\rho\) and to \([0,0]_\rho\),
\[
  L(s,\pi_\alpha\times\rho^\vee)
  =
  \prod_{j=1}^k L(s+s_j-1,\one).
\]
The denominator in \(L_m^\circ\) removes the factor coming from \(s_k=m\).
Hence the pole set of \(L_m^\circ(s,\pi_\alpha;\rho)\) is
\[
  \{1-b:b\in B(\alpha)\}.
\]
All these poles are simple.

By definition, \(B(\beta)=\{1,\dots,m-1\}\smallsetminus B(\alpha)\).  A part
of \(\beta\) has size at least \(r\) exactly when two consecutive break points
of \(\beta\), allowing also \(0\) and \(m\), are separated by at least \(r\).
This is equivalent to saying that \(B(\alpha)\) contains \(r-1\) consecutive
integers.  After applying \(b\mapsto1-b\), this is the same as saying that
\(L_m^\circ(s,\pi_\alpha;\rho)\) contains a string of \(r-1\) consecutive
integral poles.

Finally, since the poles of \(L_m^\circ\) are simple,
\(\mathcal P_{\rho,r}(s,\pi_\alpha)\) has a pole of order \(r-1\) at an
integer \(u\) exactly when
\[
  L_m^\circ(s,\pi_\alpha;\rho)
  \text{ has poles at }u,u+1,\dots,u+r-2.
\]
This proves the three equivalent conditions.  For \(m=2n\), the last assertion
follows by taking \(r=n+1\) and using Corollary
\ref{cor:trivial-principal-series-tjm}, or equivalently
\(\max_i\beta_i\geq n+1\); here
\(\mathcal P_{\rho,n+1}(s,\pi_\alpha)
=\prod_{i=0}^{n-1}L^\circ(s+i,\pi_\alpha;\rho)\).
\end{proof}

\begin{remark}
The consecutive-support hypothesis in Proposition
\ref{prop:reduced-standard-factor-criterion} is essential.  The same reduced
standard factor does not give a criterion for an arbitrary multisegment.
\end{remark}

\bibliographystyle{alpha}
\bibliography{biblio}

\end{document}